\title{On the Kontsevich Integral for Knotted Trivalent Graphs}
\author{Zsuzsanna Dancso}
\address{
  Department of Mathematics\\
  University of Toronto\\
  Toronto Ontario M5S 2E4\\
  Canada
}
\email{zsuzsi@math.toronto.edu}
\urladdr{http://www.math.toronto.edu/\~{}zsuzsi}
\date{May 5, 2010}
\newtheorem{theorem}{Theorem}[section]
\newtheorem{lemma}[theorem]{Lemma}
\newtheorem{proposition}[theorem]{Proposition}
\newtheorem{corollary}[theorem]{Corollary}
\newtheorem{remark}{Remark}
\begin{document}
\newdimen\captionwidth\captionwidth=\hsize

\begin{abstract}
In this paper we construct an extension of the Kontsevich integral
of knots to knotted trivalent graphs, which commutes with orientation switches, 
edge deletions, edge unzips, and connected sums.
In 1997 Murakami and Ohtsuki \cite{murakami} first constructed 
such an extension, building on Drinfel'd's theory of associators. 
We construct a step by step definition, using elementary 
Kontsevich integral methods, to get
a one-parameter family of corrections that all yield invariants well
behaved under the graph operations above.
\end{abstract}

\maketitle

\section{Introduction}

The goal of this paper is to construct an extension of the Kontsevich
integral $Z$ of knots to knotted trivalent graphs. The extension is 
a universal finite type invariant of knotted trivalent graphs, 
which commutes with natural operations that are defined on the
space of graphs, as well as on the target space of $Z$. These operations are
changing the orientation of an edge; deleting an edge; unzipping an edge 
(an analogue of cabling of knots); and connected sum.

One reason this is interesting is that several knot
properties (such as genus, unknotting number and ribbon property, for example) are
definable with short formulas involving knotted trivalent graphs
and the above operations. Therefore, such an operation-respecting
invariant yields algebraic necessary conditions for these properties,
i.e. equations in the target space of the invariant. This idea is
due to Dror Bar-Natan, and is described in more detail in \cite{dbn2}.
The extension of $Z$ is the first example for such an invariant. Unfortunately,
the target space of Z is too complicated for it to be useful in a computational
sense. However, we hope that by finding sufficient quotients of the
target space more computable invariants could be born.

The construction also provides an algebraic description of the
Kontsevich integral (of knots and graphs), 
due to the fact that knotted trivalent
graphs are finitely generated, i.e. there's a finite (small) set of 
graphs such that any knotted trivalent graph can be obtained from
these using the above operations. This is described in more detail in
\cite{thurst}. Since the extension commutes with the operations, it is 
enough to compute it for the graphs in the generating set. As knots are
special cases of knotted trivalent graphs, this also yields an algebraic
description of the Kontsevich integral of knots.

The Kontsevich invariant $Z$ was first extended to knotted trivalent graphs by Murakami and
Ohtsuki in \cite{murakami}, and by Cheptea and Le in \cite{chele}.
Both papers extend the combinatorial definition of $Z$, using q-tangles 
(a.k.a. parenthesized tangles) and building on a significant body of 
knowledge about Drinfel'd's associators to prove that the extension 
is a well-defined invariant. 
When one tries to extend $Z$ naively,
replacing knots by knotted trivalent graphs, the result is
neither convergent nor an isotopy invariant. Thus, one needs to apply
re-normalizations to make it converge, and corrections to make it
invariant. Using q-tangles, \cite{murakami} and \cite{chele} do
not have to deal with the convergence issue, while similar invariance 
issues arise in both approaches.

The extension constructed in \cite{chele} uses even associators. Cheptea and 
Le proved that their construction is an isotopy 
invariant (in part building on \cite{murakami}), and that it commutes 
with orientation switches and edge deletions and is unique if certain local properties
are required -in this sense it is the strongest. They also conjecture that it coincides 
with Murakami and Ohtsuki's construction, which was later confirmed. 

The main purpose of this
paper is to eliminate the black box quality of the extended invariants,
which results partly from the depth of the ingredients that go into
them, and partly from the fact that the proofs needed for the
constructions are spread over several papers 
(\cite{murakami}, \cite{lmmo}, \cite{lemu} or \cite{chele}, \cite{murakami}).

Our construction differs from previous ones in that we build
the corrected extension step by step on the naive one. After re-normalizations 
to make the extension convergent, non-invariance errors arise. 
We fix some of these by introducing counter terms (corrections)
that are precisely the inverses of the errors, and we show that thanks
to some ``syzygies'', i.e. dependencies between the errors, all the
other errors get corrected automatically. 
The proofs involve mainly elementary Kontsevich integral methods and 
combinatorial considerations.

As a result, we get a series of different corrections that all
yield knotted graph invariants. One of these is the Murakami-Ohtsuki
invariant.

We note that as a special case, our construction also produces 
an associator.

\smallskip

Since the construction has many details and there's a risk of the main ideas 
getting lost among them, Section 2 is an ``Executive summary'' of the key 
points and important steps. All the details are omitted here, and follow
later in the paper. The purpose of this section is to emphasize what is
important and to provide an express lane to those familiar with the topic.

In an effort to make the construction and the inner workings of $Z$
as transparent as possible, Section 3 is dedicated to reviewing the relevant results
for the Kontsevich integral of knots. We state all the results that
we need, mostly with proofs. The reason for reproducing the proofs is that
we need to modify some of them for the results to carry over to graphs,
therefore understanding the knot case is crucial for the readability of the paper.
Our main reference is a 
nice exposition by Chmutov and Duzhin, \cite{chmutov}. This is the source 
of any theorems and proofs in Section 2, unless otherwise stated. We also
use results from Bar-Natan's paper \cite{dbn},
which is another good reference for the Kontsevich integral of knots in general. Other
references include Kontsevich's original paper \cite{konts}.

Section 4 is dedicated to defining the necessary framework for the extension
and trying (and failing) to naively extend $Z$ to graphs.

In Section 5 we use a baby version of a renormalization technique from quantum 
field theory
to eliminate the divergence that occurs in the case of the naive extension,
and prove that the resulting not-quite-invariant has some promising properties.

The bulk of the difficulty lies in Section 6, where we need to find the
appropriate correction factors to make the extension an isotopy invariant.
This involves some computations and a series of combinatorial considerations, 
but is done in a fairly elementary way overall.
It will turn out that the result will almost, but not quite commute
with the unzip operation (and this will happen for any invariant
that commutes with edge deletion), so we need to re-normalize the unzip
operation to get a fully well behaved invariant.

\subsection{Acknowledgments} I am greatly indebted to my advisor, Dror Bar-Natan, 
for suggesting this project to me, weekly helpful discussions, and proofreading
this paper several times. I also wish to express my gratitude to the referee for the
thorough reading of the paper and many detailed suggestions.

\section{Executive summary}
This section contains the main points of the construction, but no
details. All the details will follow later in the paper.

Trivalent graphs are graphs with three vertices joining at each vertex.
We allow multiple edges, loops, and circles with no vertices.
Knotted trivalent graphs are embeddings of trivalent graphs into 
${\mathbb R}^3$, modulo isotopy. We require all edges to be oriented 
and framed. In all figures we will use the blackboard framing convention
(normal vectors pointing at the reader).

There are four operations of knotted trivalent graphs: orientation switch,
edge deletion, edge unzip, and connected sum. When deleting an edge, two
vertices cease to exist. Unzip is an analogue of cabling, as shown:
\begin{center}
\begin{picture}(0,0)%
\includegraphics{unzip1.pstex}%
\end{picture}%
\setlength{\unitlength}{2329sp}%
\begingroup\makeatletter\ifx\SetFigFont\undefined%
\gdef\SetFigFont#1#2#3#4#5{%
  \reset@font\fontsize{#1}{#2pt}%
  \fontfamily{#3}\fontseries{#4}\fontshape{#5}%
  \selectfont}%
\fi\endgroup%
\begin{picture}(6051,984)(1789,-913)
\end{picture}%

\end{center}
Connected sum depends on choices of edges to connect, and produces two 
new vertices:
\begin{center}
\begin{picture}(0,0)%
\includegraphics{connsum1.pstex}%
\end{picture}%
\setlength{\unitlength}{2565sp}%
\begingroup\makeatletter\ifx\SetFigFont\undefined%
\gdef\SetFigFont#1#2#3#4#5{%
  \reset@font\fontsize{#1}{#2pt}%
  \fontfamily{#3}\fontseries{#4}\fontshape{#5}%
  \selectfont}%
\fi\endgroup%
\begin{picture}(6458,2424)(964,-2098)
\end{picture}%

\end{center}

The Kontsevich integral of knots is defined by the following
integral formula:
\begin{center}
\input kintegral.pstex_t
\end{center}

$$
Z(K)=\sum_{m=0}^{\infty} \thinspace
\mathop{\int\limits_{t_{min}<t_1<...<t_m<t_{max}}}_{t_i \thinspace non-critical} 
\sum_{P=(z_i,z_{i}')}
\frac{(-1)^{P_{\downarrow}}}{(2\pi i)^m}
D_P \bigwedge_{i=1}^{m} \frac{dz_i-dz_i'}{z_i-z_i'}
$$

We naively generalize this to knotted trivalent graphs (or trivalent tangles,
defined precisely in Section 4.1), 
by simply putting graphs in the picture and applying the same formula.

The integral now takes its values in chord diagrams on trivalent graph ``skeletons''.
These are, just like ordinary chord diagrams, factored out by the $4T$ relation,
and one additional relation, called vertex invariance, or VI:
\begin{center}
\input vi.pstex_t
,
\end{center}
where the sign $(-1)^{\rightarrow}$ is $-1$ if the edge the chord is ending
on is oriented to be outgoing from the vertex, and $+1$ if it's incoming.

We do not mod out by the one term relation, since we don't want the
invariant to be framing-independent.

The naively defined extension has promising properties: it preserves the
factorization property (multiplicativity) of the ordinary Kontsevich integral, 
and it is invariant under horizontal
deformations that leave the critical points and vertices fixed. It holds promise
to be a universal finite type invariant of knotted trivalent graphs. But,
unfortunately, it is divergent.

\smallskip

The divergence is caused by ``short chords'' near the critical points, 
i.e. chords that are not separated from the critical point by another
chord ending -- 
since we didn't factor out
by the one term relation; and short chords near the vertices
(which cause a divergence the same way, but we don't have any reason to factor
them out).

The Kontsevich Integral has been previously extended to framed links (and
framed tangles) by Le and Murakami in \cite{lemu2}, \cite{lemu3}, and by Goryunov
in \cite{gor}. We use essentially the same method as Le and Murakami, which extends 
to trivalent vertices easily. Goryunov's approach is different, using an 
$\varepsilon$-shift of the knot along a general framing (not necessarily the
blackboard framing).

The technique we use (and which \cite{lemu2}, \cite{lemu3} used) is borrowed 
from quantum field theory: we know exactly what the divergence is, so we 
``multiply by its inverse''.

We choose a fixed scale $\mu$ and open up the strands at each vertex and critical 
point to 
width $\mu$, at a small distance $\varepsilon$ from the vertex or critical point, 
as shown for a vertex of a $\lambda$ shape:
\begin{center}
\input renormalize.pstex_t
\end{center}
We compute the integral using this picture instead of the original one.
We only allow chords between the dotted parts, but not chords coming
from far away ending on them. The renormalized integral is the limit of this
quantity as $\varepsilon$ tends to zero. We do the same for vertices of a $Y$
shape, and critical points.

We will have to insist that we embed the graph in a way that only
contains vertices of either $\lambda$ or $Y$ shapes
(I.e, either one edge is locally above the vertex and two 
are below, creating a $\lambda$-shape, or two are above and one below, 
creating a $Y$-shape. We do not allow a vertex that is a local
minimum or maximum at the same time.) There is of course
such an embedding in any isotopy class, however, this will cause some 
invariance issues later.

The re-normalized version is convergent (this is easy to prove), it retains
the good properties of the naive extension, and it is invariant under rigid motions
of critical points and vertices (i.e. deformations that do not change the 
number of critical points or the $\lambda$/$Y$ type of the vertices).

There are natural orientation switch, edge delete, edge unzip, and connected
sum operations defined on the chord diagrams on graph skeletons, and the
re-normalized integral commutes with all these operations. 

Furthermore, it has sensible behavior under changing the scale $\mu$: an
easy computation shows that the value of $Z$ will get multiplied by a simple
exponential factor (a chord diagram on two strands) at all vertices 
and critical points.

\smallskip

The problem with the re-normalized integral is that it is far from being
an isotopy invariant: it does not tolerate deformations changing the number of
critical points (``straightening humps''- the same problem arises in
case of the ordinary Kontsevich integral), or changing the shape of the vertices.

Similar to the knot case, but more complicated, we will have to introduce
corrections to make $Z$ an invariant. There are four ``correctors'' available:
we can put prescribed little chord diagrams on each minimum, maximum, 
$\lambda$-vertex, and $Y$-vertex. We will call these $u$, $n$, 
$\lambda$ and $Y$.

There are eight moves needed for the isotopy invariance of $Z$,
but it can be shown, using several relations (syzygies), that the following
three suffice (these will be called moves $1$, $3$ and $4$ later):
\begin{center}
\input moves1.pstex_t
\end{center}

The moves translate to equations between the correctors $u$, $n$, $\lambda$
and $Y$.

We know how to solve the equation corresponding to move $1$, as this was done 
even in the knot case.

\smallskip

The most difficult step is solving equations $3$ and $4$. These are
obviously not independent, since the leftmost and rightmost sides are
vertical mirror images, and the Kontsevich integral of a vertical mirror image is the 
vertical mirror image. However, it is not true that any set of corrections that
fix move $3$ will fix move $4$ automatically (i.e. there's no missing 
syzygy). What we need to do is to solve the two equations simultaneously.

We achieve this by showing that moves $3$ and $4$ are equivalent to
the following equations of chord diagrams:
$$\raisebox{-0.2 in}{\input pic20.pstex_t} \thickspace = \thickspace
\raisebox{-0.2 in}{\begin{picture}(0,0)%
\includegraphics{yvertex.pstex}%
\end{picture}%
\setlength{\unitlength}{1421sp}%
\begingroup\makeatletter\ifx\SetFigFont\undefined%
\gdef\SetFigFont#1#2#3#4#5{%
  \reset@font\fontsize{#1}{#2pt}%
  \fontfamily{#3}\fontseries{#4}\fontshape{#5}%
  \selectfont}%
\fi\endgroup%
\begin{picture}(1464,2289)(1159,-2473)
\put(1711,-1381){\makebox(0,0)[lb]{\smash{{\SetFigFont{12}{14.4}{\rmdefault}{\mddefault}{\updefault}{\color[rgb]{0,0,0}$Y$}%
}}}}
\end{picture}%
} \thickspace = \thickspace
\raisebox{-0.2 in}{\input pic21.pstex_t} \medspace ,$$
where we ``compute'' $a$ explicitly, i.e. we express it as the 
re-normalized Kontsevich integral of the simple tangle 
\raisebox{-2mm}{\begin{picture}(0,0)%
\includegraphics{adef.pstex}%
\end{picture}%
\setlength{\unitlength}{1421sp}%
\begingroup\makeatletter\ifx\SetFigFont\undefined%
\gdef\SetFigFont#1#2#3#4#5{%
  \reset@font\fontsize{#1}{#2pt}%
  \fontfamily{#3}\fontseries{#4}\fontshape{#5}%
  \selectfont}%
\fi\endgroup%
\begin{picture}(1224,699)(1189,-1348)
\end{picture}%
}.

This is a fairly elementary, but tricky computation.

All we use about $a$ for the invariance though is that it is 
mirror symmetric, which is obvious from its definition. (We use 
more of its properties later.)

Looking at the above equations there's an obvious set of corrections
that will make $Z$ an isotopy invariant:
$\lambda=a^{-1}$, $Y=1$, $u=1$, $n=\nu^{-1}$, where $n$ is determined by 
$u$ the same way it is in the knot case. 
The element $\nu$ is defined by $Z(\raisebox{-1.5 mm}{\begin{picture}(0,0)%
\includegraphics{hump.pstex}%
\end{picture}%
\setlength{\unitlength}{3947sp}%
\begingroup\makeatletter\ifx\SetFigFont\undefined%
\gdef\SetFigFont#1#2#3#4#5{%
  \reset@font\fontsize{#1}{#2pt}%
  \fontfamily{#3}\fontseries{#4}\fontshape{#5}%
  \selectfont}%
\fi\endgroup%
\begin{picture}(154,249)(964,-373)
\end{picture}%
})$.
Note that what we denote by $\nu$ is denoted by $\nu^{-1}$ in much of 
the literature (\cite{lmmo}, \cite{murakami}, \cite{ohts}, \cite{lmo}),
and that $\nu^{-1}$ is also known as the invariant of the unknot. 
Some attention needs to be paid to edge
orientations, to make sure $Z$ commutes with orientation switches.

To show that the resulting invariant commutes with edge deletion and connected 
sum, we use another property of $a$ that is almost obvious from its
expression as a value of $Z$.

By rearranging the equations, we produce a one-parameter family of corrections
all yielding isotopy invariants, one of these is the set of corrections used by 
Murakami and Ohtsuki.

Finally, we show that unfortunately, no invariant will commute at the same
time with the edge delete and edge unzip operations, as they are defined, so
we re-normalize the unzip operation to make $Z$ fully well-behaved. This amounts to
observing what the error is and multiplying by its inverse.

\section{A quick overview of the Kontsevich integral for knots}

The reference for everything in this section, unless otherwise stated,
is \cite{chmutov}.

\subsection{Finite type invariants and the algebra ${\mathcal A}$}

The theory of finite type (or Vassiliev) invariants grew out of the idea
of V. Vassiliev to extend knot invariants to the class of singular knots.
By a singular knot we mean a knot with a finite number of simple (transverse)
double points. The extension of a
${\mathbb C}$-valued knot invariant $f$ follows the rule

$$ f(\doublepoint)=f(\overcrossing)-f(\undercrossing).$$

A {\it finite type (Vassiliev) invariant} is a knot invariant
whose extension vanishes on all knots with more than $n$ double points,
for some $n \in {\mathbb N}$. The smallest such $n$ is called the {\it order},
or {\it type} of the invariant.

\bigskip

The set of all Vassiliev invariants forms a vector space ${\mathcal V}$,
which is filtered by the subspaces ${\mathcal V}_n$, the
Vassiliev invariants of order at most $n$:

$${\mathcal V}_0 \subseteq {\mathcal V}_1 \subseteq {\mathcal V}_2
\subseteq ... \subseteq {\mathcal V}_n \subseteq ... \subset {\mathcal V}$$

This filtration allows us to study the simpler associated graded space:

$$ gr{\mathcal V}={\mathcal V}_0 \oplus {\mathcal V}_1/{\mathcal V}_0
\oplus {\mathcal V}_2/{\mathcal V}_1 \oplus ... \oplus
{\mathcal V}_n/{\mathcal V}_{n-1} \oplus ...$$

The components ${\mathcal V}_n/{\mathcal V}_{n-1}$ are best understood in
terms of {\it chord diagrams}.

\bigskip

A {\it chord diagram} of order $n$ is an oriented circle with a set of n {\it chords}
all of whose endpoints are distinct points of the circle (see the figure below). The actual 
shape of the chords and the exact position of endpoints are
irrelevant, we are only interested in the pairing they define on the $2n$
cyclically ordered points:

\begin{center}
\begin{picture}(0,0)%
\includegraphics{chorddiag.pstex}%
\end{picture}%
\setlength{\unitlength}{2131sp}%
\begingroup\makeatletter\ifx\SetFigFont\undefined%
\gdef\SetFigFont#1#2#3#4#5{%
  \reset@font\fontsize{#1}{#2pt}%
  \fontfamily{#3}\fontseries{#4}\fontshape{#5}%
  \selectfont}%
\fi\endgroup%
\begin{picture}(1823,1822)(515,-1047)
\end{picture}%

\end{center}

The {\it chord diagram of a singular knot} $S^1 \to S^3$ is the oriented
circle $S^1$ with the pre-images of each double point connected by a chord.

\bigskip

\smallskip
Let ${\mathcal C}_n$ be the vector space spanned by all chord diagrams of
order $n$.

Let $F_n$ be the vector space of all $\mathbb C$-valued linear functions on
${\mathcal C}_n$.

A Vassiliev invariant $f \in {\mathcal V}_n$ determines a function
$[f] \in F_n$ defined by $[f](C)=f(K)$, where K is any singular  knot whose
chord diagram is C.

The fact that $[f]$ is well defined (does not depend on the choice of $K$)
can be seen as follows: If $K_1$ and $K_2$ are two singular knots with the same
chord diagram, then they can be projected on the plane in such a way that
their knot diagrams coincide except possibly at a finite number of crossings,
where $K_1$ may have an over-crossing and $K_2$ an under-crossing or vice versa.
But since $f$ is an invariant of order $n$ and $K$ has $n$ double points, a
crossing flip does not change the value of $f$ (since the difference would
equal to the value of $f$ on an $(n+1)$-singular knot, i.e. 0).

\bigskip

The kernel of the map ${\mathcal V}_n \to F_n$ is, by definition,
${\mathcal V}_{n-1}$. Thus, what we have defined is an inclusion
$i_n: {\mathcal V}_n/{\mathcal V}_{n-1} \to F_n$. The image of this inclusion
(i.e. the set of linear maps on chord diagrams that come from Vassiliev
invariants) is described by two relations:

\bigskip

{\bf 4T, the four-term relation}:

\begin{center}
\input 4t.pstex_t
\end{center}
\noindent
for an arbitrary fixed position of $(n-2)$ chords (not drawn here) and
the two additional chords as shown.

This follows from the following fact of singular knots:

\begin{center}
\input knot4t.pstex_t
\end{center}
\noindent
which is easy to show using the following isotopy:

\begin{center}
\input trick.pstex_t
\end{center}
\noindent
(or by resolving both double points).

\smallskip

{\bf FI, the framing independence relation} (a.k.a. {\bf One-term} relation):

\begin{center}
\input fi.pstex_t
\end{center}
\noindent
which follows from the Reidemeister one move of knot diagrams.

(The dotted arcs here and on the following pictures mean that there might
be further chords attached to the circle, the positions of which are fixed
throughout the relation.)

\bigskip

To explain the name of this relation, let us say a word about framed knots.
A {\it framing} on a curve is a smooth choice of a normal vector at each point
of the curve, up to isotopy. This is equivalent to ``thickening'' the curve into a band, where
the band would always be orthogonal to the chosen normal vector. A knot projection
(knot diagram) defines a framing (``blackboard framing''), where we
always choose the normal vector that is normal to the plane that we project to.
Every framing (up to isotopy) can be represented as a blackboard framing, for some projection.

If we were to study framed knots, these are in one-to-one
correspondence to knot diagrams modulo Reidemeister moves $R2$ and $R3$, as $R1$
adds or eliminates a twist, i.e. it changes the framing.

The framing independence relation arises from $R1$, meaning that the knot invariants
we work with are independent of any framing on the knot. This will change later
in the paper as we turn to graphs.

\smallskip

We define the algebra ${\mathcal A}$, as a direct sum of the vector spaces
${\mathcal A}_n$ generated by chord diagrams of order $n$ considered
modulo the FI and 4T relations. 
By an abuse of notation, from now on we consider $4T$ and $FI$ to be 
relations in ${\mathcal C}_n$, i.e. define the appropriate (sums of)
chord diagrams to be zero.
The multiplication on ${\mathcal A}$ is
defined by the connected sum of chord diagrams, which is well defined
thanks to the 4T relations (for details, see for example \cite{dbn}).

\smallskip

The ${\mathbb C}$-valued linear functions on $\mathcal A_n$ are called
{\it weight systems of order n}. The above construction shows that every
Vassiliev invariant defines a weight system of the same order.

\smallskip

The famous theorem of Kontsevich, also known as the Fundamental Theorem
of Finite Type Invariants, asserts that every weight system arises as
the weight system of a finite type invariant. The proof relies on the
construction of a {\it universal finite type invariant}, called the
Kontsevich Integral $Z$, which takes its values in
the graded completion of ${\mathcal A}$. Given a weight system,
one gets the appropriate finite type invariant by pre-composing with $Z$.

\subsection{The definition of the Kontsevich Integral}

Let us represent ${\mathbb R}^3$ as a direct product of
a complex plane $\mathbb C$
with coordinate $z$ and a real line with coordinate $t$. Let the knot $K$
be embedded in ${\mathbb C} \times {\mathbb R}$ in such a way that the
coordinate $t$ is a Morse function on $K$.

The {\it Kontsevich integral} of $K$ is an element in the graded completion
of ${\mathcal A}$, defined by the following formula:

\begin{center}
\input kintegral.pstex_t
\end{center}

$$
Z(K)=\sum_{m=0}^{\infty} \thinspace
\mathop{\int\limits_{t_{min}<t_1<...<t_m<t_{max}}}_{t_i \thinspace non-critical} 
\sum_{P=(z_i,z_{i}')}
\frac{(-1)^{P_{\downarrow}}}{(2\pi i)^m}
D_P \bigwedge_{i=1}^{m} \frac{dz_i-dz_i'}{z_i-z_i'}
$$

In the formula, $t_{min}$ and $t_{max}$ are the minimum and maximum of
the function $t$ on $K$.

The integration domain is the $m$-dimensional simplex
$t_{min}<t_1<...<t_m<t_{max}$ divided by the critical values into a
number of connected components. The number of summands in the integral
is constant in each of the connected components.

In each plane $\{t=t_j\}$
choose an unordered pair of distinct points $(z_j,t_j)$ and $(z_j',t_j)$
on K, so that $z_j(t_j)$ and $z_j'(t_j)$ are continuous functions. $P$
denotes the set of such pairs for each $j$. The integrand is the sum
over all choices of $P$.

For a pairing $P$, $P_{\downarrow}$ denotes the
number of points $(z_j,t_j)$ or $(z_j',t_j)$ in P where $t$ decreases along
the orientation of $K$.

$D_P$ denotes the chord diagram obtained by joining
each pair of points in $P$ by a chord, as the figure shows.

Over each connected component, $z_j$ and $z_j'$ are smooth functions. By
$\bigwedge_{i=1}^{m} \frac{dz_i-dz_i'}{z_i-z_i'}$ we mean the pullback of
this form to the simplex.

The term of the Kontsevich integral corresponding to $m=0$ is, by
convention, the only chord diagram with no chords, with coefficient one,
i.e, the unit of the algebra ${\mathcal A}$. From now on, we will refer
to this element as $1 \in {\mathcal A}$.

\subsection{Convergence}

Let us review the proof of the fact that each integral in the above
formula is convergent.
By looking at the definition, one observes that the only way the integral
may not be finite is the $(z_i-z_i')$ in the denominator getting
arbitrarily small near the critical points (the boundaries of the
connected components of the integration domain).
This only happens near a local minimum or maximum in the knot - otherwise the
minimum distance between strands is a lower bound for the denominator.

If a chord $c_k$ is separated from the critical value by another
``long'' chord $c_{k+1}$ ending
closer to the critical value, as shown below, then the
smallness in the denominator corresponding to chord $c_k$ will be canceled
by the smallness of the integration domain for $c_{k+1}$, hence the integral
converges:

\begin{center}
\input nonisolated.pstex_t
\end{center}

The integral for the long chord can be estimated as follows (using the
picture's notation):

$$ \Big| \int_{t_k}^{t_{crit}} \frac{dz_{k+1}-dz_{k+1}'}{z_{k+1}-z_{k+1}'} \Big| \leq
C \Big| \int_{t_k}^{t_{crit}} d(z_{k+1}-z_{k+1}') \Big| =$$

$$C| (z_{crit}-z_{k})-(z_{k+1}(t_{crit})-z_{k+1}'(t_k))| \leq C'|z_k-z_k'| $$

For some constants C and C'. So the integral for the long chord is as small
as the denominator for the short chord, therefore the integral converges.

\bigskip

Thus, the only way a divergence can occur is the case of an isolated chord,
i.e. a chord near a critical point that is not separated from it by any
other chord ending. But, by the one term relation, chord diagrams
containing an isolated chord are declared to be zero, which makes the
divergence of the corresponding integral a non-issue.

\subsection{Invariance}

Since horizontal planes cut the knot into tangles,
we will use tangles and their properties
to prove the invariance of the Kontsevich integral in the class of Morse
knots.

By a {\it tangle} we mean a 1-manifold embedded in $[0,1]^3$, whose boundary
is the union of $k$ points on the bottom face of the cube, positioned at 
$$(\frac{1}{k+1},\frac{1}{2},0),...,(\frac{k}{k+1},\frac{1}{2},0);$$ 
and $l$ points on the top face, positioned at
$$(\frac{1}{l+1},\frac{1}{2},1),...,(\frac{l}{l+1},\frac{1}{2},1).$$
Two tangles are considered equal if there is an isotopy of the cubes
that fixes their boundary and takes one tangle to the other.

Tangles can be multiplied by stacking one cube on top of another and rescaling,
if the number of endpoints match.

\bigskip

A {\it tangle chord diagram} is a tangle supplied with a set of {\it horizontal}
chords considered up to a diffeomorphism of the tangle that preserves
the horizontal fibration.

Multiplication of tangles induces a multiplication of tangle chord diagrams in 
the obvious way.

If $T$ is a tangle, the space ${\mathcal A}_T$ is a vector space generated
by all chord diagrams on $T$, modulo the set of {\it tangle one-} and
{\it four-term relations}:

\bigskip

{\bf The tangle one-term relation} (or framing independence): A tangle
chord diagram with an isolated chord is equal to zero in ${\mathcal A}_T$.

\bigskip

{\bf The tangle $4T$ relation}: Consider a tangle consisting of $n$
parallel vertical strands. Denote by $t_{ij}$ the chord diagram with
a single horizontal chord connecting the $i$-th and $j$-th strands,
multiplied by $(-1)^\downarrow$, where $\downarrow$ stands for the
number of endpoints of the chord lying on downward-oriented strands.

\begin{center}
\input tij.pstex_t
\end{center}

The tangle $4T$ relation can be expressed as a commutator in terms of
the $t_{ij}$'s:

$$[t_{ij}+t_{ik},t_{jk}]=0.$$

One can check that by closing the three vertical strands into a circle
respecting their orientations, the tangle $4T$ relation carries over into
the ordinary $4T$ relation.

\smallskip

We take the opportunity here to mention a useful lemma, a slightly
different version of which appears in D. Bar-Natan's paper \cite{dbn},
and a special case is stated in Murakami and Ohtsuki, \cite{murakami}.
This is a direct consequence of the tangle $4T$ relations:

\begin{lemma} \label{loc}
{\bf Locality.}
Let $T$ be the tangle consisting of $n$ parallel vertical strands, and $D$ be any
chord diagram such that no chords end on the $n$-th string. Let $S$ be
the sum $\sum\limits_{i=1}^{n-1}t_{in}$ in ${\mathcal A}_T$. Then $S$ commutes
with D in ${\mathcal A}_T$.
\end{lemma}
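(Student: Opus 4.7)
The plan is to reduce the statement to checking that $S$ commutes with a single generator, and then to obtain this from two applications of the tangle $4T$ relation.

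First I would reduce to generators. Since no chord of $D$ ends on the $n$-th strand, $D$ can be expanded (using the tangle multiplication by height ordering) as a linear combination of products of the ``elementary'' one-chord tangle diagrams $t_{jk}$ with $j,k\in\{1,\dots,n-1\}$. Because commutators satisfy $[AB,S]=A[B,S]+[A,S]B$, it suffices to prove that $[t_{jk},S]=0$ in $\mathcal A_T$ for every such pair $j\neq k$ in $\{1,\dots,n-1\}$.

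Next I would split the commutator according to the indexing of $S$. Write
$$[t_{jk},S]=\sum_{i=1}^{n-1}[t_{jk},t_{in}].$$
For the indices $i\notin\{j,k\}$, the two chords $t_{jk}$ and $t_{in}$ land on four pairwise disjoint strands of $T$. In $\mathcal A_T$, two chords with disjoint sets of endpoint strands may be freely reordered in height by a diffeomorphism of $T$ preserving the horizontal fibration, so each such commutator vanishes. Thus only the $i=j$ and $i=k$ terms survive, leaving
$$[t_{jk},S]=[t_{jk},t_{jn}]+[t_{jk},t_{kn}].$$

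The heart of the argument is then two applications of the tangle $4T$ relation $[t_{ab}+t_{ac},t_{bc}]=0$. Applying it with $(a,b,c)=(j,k,n)$ gives $[t_{jk},t_{kn}]=-[t_{jn},t_{kn}]=[t_{kn},t_{jn}]$, and applying it with $(a,b,c)=(k,j,n)$ (and using $t_{kj}=t_{jk}$) gives $[t_{jk},t_{jn}]=[t_{jn},t_{kn}]$. Adding these two identities yields $[t_{jk},t_{jn}]+[t_{jk},t_{kn}]=[t_{jn},t_{kn}]+[t_{kn},t_{jn}]=0$, as required.

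The only genuinely subtle point, and what I would watch carefully, is the reduction to generators: one must justify that the decomposition of $D$ into products of $t_{jk}$'s uses only indices $j,k<n$, and that the ``disjoint strand'' commutations are legitimate in $\mathcal A_T$ (they are, by the definition of tangle chord diagrams up to horizontal-fibration-preserving diffeomorphism). Once this is in place, the rest is essentially two instances of $4T$ arranged so that the resulting commutators cancel in pairs.
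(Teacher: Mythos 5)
Your proof is correct and is exactly the argument the paper has in mind: the paper itself gives no proof, saying only that the lemma is ``a direct consequence of the tangle 4T relations'' (pointing to Bar-Natan's hook-style argument), and your reduction to a single generator $t_{jk}$ followed by the two instances of $4T$ with $(j,k,n)$ and $(k,j,n)$ cancelling in pairs is precisely that consequence made explicit. The only caveat is your justification of $[t_{jk},t_{in}]=0$ for disjoint index pairs: a fibration-preserving diffeomorphism preserves the height order of chords and $4T$ alone does not formally imply this commutation, but it is built into the usual combinatorial definition of chord diagrams on a skeleton (only the order of endpoints along each strand matters) and the paper itself treats such commutations as obvious, so this is a mis-attribution of the reason rather than a genuine gap.
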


\smallskip

The Kontsevich integral is defined for tangles the same way it is defined
for knots.

By Fubini's theorem, it is multiplicative:
$$Z(T_1)Z(T_2)=Z(T_1 T_2),$$
whenever the product $T_1 T_2$ is defined.

This implies the important
fact that the Kontsevich integral of the (vertical, and by the
invariance results, any) connected sum of knots
is the product (in the algebra ${\mathcal A}$ of chord diagrams) of the
Kontsevich integrals of the summands. By the invariance results this
will generalize to any connected sum. We will sometimes refer to
this as the factorization property, or multiplicativity.

\begin{proposition} \label{hor}
The Kontsevich integral is invariant under horizontal deformations
(deformations preserving the $t$ coordinate) of
the knot that leave the critical points fixed.
\end{proposition}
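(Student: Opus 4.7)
The plan is to prove invariance term by term. Fix $m \geq 1$ and consider the $m$-th summand of $Z(K_s)$ as the integral over the simplex $\Delta_m = \{t_{\min} < t_1 < \cdots < t_m < t_{\max}\}$ of the form $\omega_P(s) = \bigwedge_i \frac{dz_i - dz_i'}{z_i - z_i'}$, summed over pairings $P$ with the signs $(-1)^{P_\downarrow}$ and the chord-diagram coefficients $D_P$. I would regard $(s, t_1, \ldots, t_m)$ as coordinates on the $(m+1)$-dimensional manifold $[0,1] \times \Delta_m$, with $z_i$ and $z_i'$ now depending on both $s$ and $t_i$. Each factor of $\omega_P$ is then $d\log(z_i - z_i')$ on this enlarged domain, hence exact, and therefore $\omega_P$ itself is closed.

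By Stokes' theorem, $\int_{\partial([0,1]\times\Delta_m)} \omega_P = 0$. The boundary splits as $\{s=0\}\times\Delta_m$, $\{s=1\}\times\Delta_m$, and $[0,1]\times\partial\Delta_m$. The first two pieces, after reinstating the sum over $P$ and the chord-diagram coefficients, contribute $Z_m(K_1) - Z_m(K_0)$. So it suffices to show that the total contribution of $[0,1]\times\partial\Delta_m$ vanishes in ${\mathcal A}$.

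I would classify the codimension-one faces of $\Delta_m$ into three types and handle each in turn. For \emph{coalescing chord heights} $t_i = t_{i+1}$: fixing the remaining $m-2$ chord endpoints and summing the boundary integrand over the ways the two collided chords can pair their four endpoints at the common height produces, with the correct $(-1)^{P_\downarrow}$ signs, the tangle four-term combination $[t_{ij}+t_{ik},t_{jk}]$, which is zero in ${\mathcal A}$. For an \emph{extreme chord at a global critical value} $t_1 = t_{\min}$ or $t_m = t_{\max}$: both endpoints of the extreme chord approach a single critical point of $t$ on $K$, so the chord becomes isolated next to that critical point and the chord diagram $D_P$ vanishes by framing independence. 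For $t_i = c$ an \emph{interior critical value}: this is a wall between two sub-simplices on which the pairings $P$ are locally constant, and on this wall the chord endpoint lies at a critical point, making the limiting chord isolated and hence zero by FI on each side.

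The main obstacle will be the bookkeeping in the first case: one has to sum the boundary integrand over all pairings compatible with a fixed configuration of the other $m-2$ chords, track the $(-1)^{P_\downarrow}$ signs, and recognise the result as an instance of the tangle 4T relation (invoking, if helpful, the locality lemma to treat the contribution as taking place within a thin horizontal slice). Once this identification is in place, the other two types of faces are handled uniformly by FI, and the Stokes identity yields $Z_m(K_1) = Z_m(K_0)$ for every $m$, which is the desired invariance of $Z$ under horizontal deformations fixing the critical points.
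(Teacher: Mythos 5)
Your route is genuinely different from the paper's: you run the Stokes argument globally on the whole knot, whereas the paper cuts the knot into tangles without critical points plus thin tangles each containing one critical point, invokes Lemma \ref{braidhor} (the Stokes/exactness argument, but only for a critical-point-free tangle whose top and bottom are fixed by the deformation), and concludes by multiplicativity and a limit. The price of your global version is that you must deal with the faces created by the critical values, and that is exactly where your argument has a gap. At an interior wall $t_i=c$ ($c$ a critical value of an internal cup or cap), it is the \emph{height} $t_i$ that equals $c$; the $i$-th chord at that height need not have an endpoint anywhere near the critical point --- it can join two strands far from the cup/cap --- so your claim that ``the limiting chord is isolated and hence zero by FI'' is simply false for most pairings, and even a chord with exactly one endpoint on a merging strand is not an isolated chord. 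These wall contributions do not vanish term by term; what actually happens is that Stokes must be applied separately on each connected component of the subdivided simplex (the set of admissible pairings changes across the wall), and the two sides must be shown to cancel against each other: pairings not touching the two merging strands occur on both sides with opposite boundary orientations; pairings with one endpoint on the left branch and the corresponding pairing on the right branch limit to the same diagram $D_P$ but carry opposite signs $(-1)^{P_\downarrow}$ (here one uses that the critical point itself is fixed by the deformation, and Lemma \ref{loc}-type locality if you want to organize it); only the genuinely short chord between the two merging strands is disposed of by FI, and even there one needs the same estimates as in the convergence section because the form is singular on the wall. Your treatment of the extreme faces $t_1=t_{\min}$, $t_m=t_{\max}$ is fine, and the identification of the coalescing faces $t_i=t_{i+1}$ with the tangle 4T commutator is the standard bookkeeping.

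So as written the proof does not go through: either supply the full cancellation analysis at interior critical-value walls sketched above, or do what the paper does and localize first --- decompose into critical-point-free tangles (where your Stokes argument is exactly Lemma \ref{braidhor}, with the fixed top and bottom killing the extreme faces and 4T killing the coalescing ones) and thin tangles around each critical point, then pass to the limit using the factorization property. The decomposition approach buys you freedom from any analysis at critical levels, which is precisely the delicate part your global argument glosses over.
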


\begin{center}
\begin{picture}(0,0)%
\includegraphics{horizdef.pstex}%
\end{picture}%
\setlength{\unitlength}{2842sp}%
\begingroup\makeatletter\ifx\SetFigFont\undefined%
\gdef\SetFigFont#1#2#3#4#5{%
  \reset@font\fontsize{#1}{#2pt}%
  \fontfamily{#3}\fontseries{#4}\fontshape{#5}%
  \selectfont}%
\fi\endgroup%
\begin{picture}(4168,1449)(720,-1048)
\end{picture}%

\end{center}

\begin{proof}
Let us decompose the knot into a product of tangles without critical
points, and other (``thin'') tangles containing one unique critical point.

The following lemma addresses the case of tangles without critical
points.
The proposition then follows from the lemma by taking a limit.
(See \cite{chmutov} for more details.)
\end{proof}

\begin{lemma} \label{braidhor}
Let $T_0$ be a tangle without critical points and $T_{\lambda}$ a horizontal
deformation of $T_0$ into $T_1$, such that $T_{\lambda}$ fixes the top and
the bottom of the tangle. Then $Z(T_0)=Z(T_1)$.
\end{lemma}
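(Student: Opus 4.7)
My plan is to view $Z(T_\lambda)$ as a function of $\lambda \in [0,1]$ and show that its derivative vanishes identically. Since $T_0$ has no critical points, each strand is a graph $z=z(t)$, so once a pairing $P$ is fixed, the integration domain is the single simplex $\Delta_m = \{t_{min} < t_1 < \cdots < t_m < t_{max}\}$: no subdivision by critical values is needed. The integrand $\omega_P(t,\lambda) = \bigwedge_{k=1}^m \frac{d(z_k - z'_k)}{z_k - z'_k}$ is a smooth family of $m$-forms on $\Delta_m$, which I assemble into a single $(m+1)$-form $\tilde\omega_P$ on the cylinder $[0,1]_\lambda \times \Delta_m$.

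The decisive fact is that each factor $d\log(z_k - z'_k)$ is closed as a $1$-form in the variables $(t_k, \lambda)$, so $\tilde\omega_P$ is closed on the cylinder. Stokes' theorem then gives
\[
Z(T_1) - Z(T_0) \;=\; \sum_{m,P}\frac{(-1)^{P_{\downarrow}}}{(2\pi i)^m}\,D_P \int_{[0,1]\times\partial\Delta_m}\tilde\omega_P.
\]
The boundary $\partial\Delta_m$ decomposes into \emph{outer faces} ($t_1 = t_{min}$ or $t_m = t_{max}$) and \emph{diagonal faces} ($t_i = t_{i+1}$). I dispatch the outer faces first: since $T_\lambda$ fixes the top and the bottom of the tangle, the difference $z_k - z'_k$ is independent of $\lambda$ along these faces, so the $d\lambda$ component of $\tilde\omega_P$ vanishes there and the corresponding boundary integral is zero.

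The remaining contributions, and the main obstacle, come from the diagonal faces where two chord endings $t_i$ and $t_{i+1}$ collide. Fixing the positions of the other chords, the $i$-th and $(i+1)$-th chords can each end on any pair among the (at most) three strands involved, so for each outer configuration I get the four arrangements of two chords on three strands, each paired with the appropriate limit of $d\log(z-z')$ integrals. The key observation is that these four terms, once the sign weights $(-1)^{P_{\downarrow}}$ are absorbed into the generators $t_{jk}$, assemble precisely into the tangle $4T$ commutator $[t_{ij} + t_{ik},\, t_{jk}] = 0$. Hence the diagonal contributions cancel term by term in $\mathcal{A}_T$.

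The one piece of care required is tracking the signs $(-1)^{P_{\downarrow}}$ when a chord endpoint slides from one strand to another across the collision; these signs are exactly what converts the analytic identity into the tangle $4T$ relation, and were already verified when $4T$ was established, so no new computation is needed. Combining the vanishing of the outer and diagonal boundary contributions yields $\partial_\lambda Z(T_\lambda) \equiv 0$, and therefore $Z(T_0) = Z(T_1)$.
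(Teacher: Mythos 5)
Your strategy is exactly the one this paper points to (it omits the proof, citing \cite{chmutov}): extend the integrand to the cylinder $[0,1]\times\Delta_m$, observe it is closed, apply Stokes, kill the outer faces using that the deformation fixes the top and bottom, and cancel the diagonal faces in ${\mathcal A}_T$. The outer-face step is fine (modulo the slip that $\tilde\omega_P$ is a closed $m$-form on the $(m+1)$-dimensional cylinder, not an $(m+1)$-form). The problem is in the diagonal-face cancellation, which is where the whole content of the lemma sits.

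First, two colliding chords can end on four distinct strands, not ``at most three''; that case cancels because the two orderings yield the same diagram $D_P$ while $\omega_A\wedge\omega_B=-\omega_B\wedge\omega_A$, and has nothing to do with $4T$. Second, and more seriously, in the shared-strand case the terms do not ``assemble precisely into the tangle $4T$ commutator''. Writing $\omega_{ij}=d\log(z_i-z_j)$ and grouping the contribution of the face $t_i=t_{i+1}$ by unordered pairs of colliding chords, you get
\[
[t_{ij},t_{jk}]\,\omega_{ij}\wedge\omega_{jk}
+[t_{ik},t_{jk}]\,\omega_{ik}\wedge\omega_{jk}
+[t_{ij},t_{ik}]\,\omega_{ij}\wedge\omega_{ik},
\]
and the three $2$-forms are distinct, so there is no common analytic coefficient to factor out before quoting $[t_{ij}+t_{ik},t_{jk}]=0$. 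What $4T$ actually gives is that the three commutators are all equal to $\pm[t_{ij},t_{jk}]$, leaving $[t_{ij},t_{jk}]\bigl(\omega_{ij}\wedge\omega_{jk}-\omega_{ik}\wedge\omega_{jk}-\omega_{ij}\wedge\omega_{ik}\bigr)$; the vanishing of the $2$-form in parentheses is Arnold's identity, $\frac{da}{a}\wedge\frac{db}{b}+\frac{db}{b}\wedge\frac{dc}{c}+\frac{dc}{c}\wedge\frac{da}{a}=0$ whenever $a+b+c=0$ (here $a=z_i-z_j$, $b=z_j-z_k$, $c=z_k-z_i$). This analytic identity is the genuine extra input expressing the flatness of the integrand (the KZ form); it is not a sign bookkeeping issue and was not ``already verified when $4T$ was established''. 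As written, your boundary contribution has not been shown to vanish; adding the four-strand case and Arnold's identity closes the argument and recovers the proof referenced in \cite{chmutov}.
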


We will use this lemma as an ingredient without any modification, so
we omit the proof here, which uses Stokes Theorem and the fact
that the differential form inside the integral is exact.  Details 
can be found in \cite{chmutov}, for example.

The next lemma is the only one where we go into more detail than
Chmutov and Duzhin in \cite{chmutov}. We will need a
modification of this proof in the graph case, so we felt it was important
to use rigorous notation and touch on the fine points.

\begin{proposition} \label{mvcrits}
{\bf Moving critical points.} Let $T_0$ and $T_1$ be two tangles that
differ only in a
thin needle (possibly twisted), as in the figure,
such that each level $\{t=c\}$ intersects the
needle in at most two points and the distance between these is at most
$\varepsilon$. Then $Z(T_0)=Z(T_1)$.
\end{proposition}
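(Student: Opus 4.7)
My plan is to use multiplicativity of the Kontsevich integral to isolate the needle, and then show that the integral of the needle slab is insensitive to the shape of the needle. Pick generic horizontal levels $t_{\mathrm{top}}$ and $t_{\mathrm{bot}}$ just above and below the entire needle, so that $T_0$ and $T_1$ agree outside the slab $S=\{t_{\mathrm{bot}}\le t\le t_{\mathrm{top}}\}$. By multiplicativity under vertical stacking,
$$Z(T_i)=Z(T_i|_{t>t_{\mathrm{top}}})\cdot Z(N_i)\cdot Z(T_i|_{t<t_{\mathrm{bot}}}),$$
where $N_i:=T_i\cap S$. The outer factors agree, so the claim reduces to $Z(N_0)=Z(N_1)$. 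Each $N_i$ is a tangle consisting of the needle (a possibly twisted U-shape with one critical point at the tip) together with some vertical strands that are identical in $N_0$ and $N_1$.

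Next I decompose $Z(N_i)$ by the number of chord endpoints of each summand that lie on the needle. The subsum over chord diagrams with no endpoint on the needle depends only on the common vertical strands and on the unadorned needle skeleton, so it is the same for $i=0,1$. The task is to show that every other summand makes no contribution. To this end, first apply Lemma~\ref{braidhor} to the sub-slabs of $S$ that avoid the tip (where the needle has no critical points) to horizontally compress the needle to an arbitrarily small width $\varepsilon'\le\varepsilon$ without changing $Z(N_i)$.

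With the needle compressed to width $\varepsilon'$, I estimate each offending chord diagram using the same short-chord/long-chord bookkeeping reviewed in the Convergence subsection. If the chord closest to the tip has both endpoints on the needle, it is isolated and the diagram vanishes by FI. Otherwise, a chord with both endpoints on the needle is separated from the tip by a chord above it ending on the needle, and the Stokes-type bound from the Convergence subsection makes the ``long'' chord's integral $O(\varepsilon')$, which cancels the would-be singularity of the ``short'' one and leaves a joint factor $O(\varepsilon')$. A chord with exactly one endpoint on the needle contributes $O(\varepsilon')$ as well: its partner endpoint lies on a straight vertical strand at positive bounded distance from the needle, so $dz_j'=0$ and the integrand $dz_j/(z_j-z_j')$ is regular, while $z_j$ sweeps through an interval of width at most $\varepsilon'$. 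In every case an offending summand is $O(\varepsilon')$, uniformly over the finitely many diagrams of a given order, and since $\varepsilon'$ can be made arbitrarily small without changing $Z(N_i)$, the total offending contribution must be zero.

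The principal obstacle is the combinatorial bookkeeping: in a chord diagram containing several chords that touch the needle, one must verify that, after FI kills the isolated-chord diagrams, an identifiable factor of $\varepsilon'$ can always be extracted from the remaining ones, and that the bounds are uniform enough on the sum, not just on each summand. Once this is in place, $Z(N_0)$ and $Z(N_1)$ both collapse to the common no-chord-on-needle contribution, so $Z(T_0)=Z(T_1)$, completing the proof.
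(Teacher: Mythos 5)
There is a genuine gap at the heart of your argument: the claim that \emph{every} summand with a chord ending on the needle is individually $O(\varepsilon')$. This is false for the chords with exactly one endpoint on the needle. Your justification assumes the partner endpoint lies on a straight vertical strand, so that $dz_j'=0$; but nothing allows this assumption -- the rest of the tangle inside the slab is arbitrary, may contain its own critical points and horizontally wandering strands, and cannot be made vertical by Lemma~\ref{braidhor}-type deformations (nor, incidentally, can the compression of the needle itself be done by Lemma~\ref{braidhor} on sub-slabs whose top and bottom are fixed, since the needle's intersection with those boundaries must move; the correct tool is the invariance under horizontal deformations fixing critical points, Proposition~\ref{hor}). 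For such a ``long'' chord the relevant integral is $\int d\bigl(\ln(z_j-z_j')\bigr)=\ln\bigl((z_j-z_j')|_{\mathrm{end}}/(z_j-z_j')|_{\mathrm{start}}\bigr)$, which picks up the full horizontal variation of $z_j'$ along the far strand; this is of order $1$ no matter how thin you squeeze the needle, so no factor of $\varepsilon'$ can be extracted from that summand alone, and the ``offending contribution'' does not tend to zero term by term.

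What makes the proposition true is not individual smallness but a cancellation in pairs, and this mechanism is absent from your proposal. For the chord ending closest to the tip (when it is not the isolated one killed by FI), there are two pairings in which its needle endpoint sits at the same height on the left or on the right side of the needle; they produce the same diagram $D_P$ with opposite signs coming from $(-1)^{P_\downarrow}$, because the two sides of the needle are oppositely oriented with respect to $t$. Only the \emph{difference} of the two integrals is $O(\varepsilon)$, being $\ln\bigl(1+(z_k''-z_k)/(z_k-z_k')\bigr)$ with $|z_k''-z_k|\le\varepsilon$. This estimate is then propagated down the needle chord by chord, treating the next chord according to whether it is long or short, yielding a bound of the form $(C'')^n\varepsilon$ in each degree $n$, after which $\varepsilon\to 0$ by horizontal deformations. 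Your treatment of needle--needle chords below a long chord and your final limiting step are consistent with this, but without the sign pairing the long-chord terms do not go away, so the proof as written does not close; the missing ingredient is precisely this pairing, not merely ``combinatorial bookkeeping.''
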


\begin{center}
\input needle.pstex_t
\end{center}

\begin{proof}
$Z(T_0)$ and $Z(T_1)$ can only differ in terms with a chord ending on the
needle. If the chord closest to the end of the needle connects
the two sides of the needle (isolated chord), then the corresponding
diagram is zero by the FI (1T) relation.

So, we can assume that the one closest to the needle's end is a ``long
chord'', suppose the endpoint belonging to the needle is $(z_k,t_k)$.
Then, there's another choice for the $k$-th chord which touches the needle
at the opposite point $(z_k'',t_k)$, as the figure shows, and $D_P$ will
be the same for these two choices.

\begin{center}
\input pairedchord.pstex_t
\end{center}

The corresponding two terms appear in $Z(T_1)$ with opposite signs due
to $(-1)^{P_{\downarrow}}$, and the difference of the integrals
can be estimated as follows:

$$\Big| \int_{t_{k-1}}^{t_c} d(ln(z_k-z_k')) - \int_{t_{k-1}}^{t_c}
d(ln(z_k''-z_k')) \Big| = \Big| ln \Big( \frac{z_{k}''(t_{k-1})-z_{k}'(t_{k-1})}
{z_{k}(t_{k-1})-z_{k}'(t_{k-1})} \Big) \Big| = $$

$$= \Big| ln\Big(1+ \frac{z_{k}''(t_{k-1})-z_{k}(t_{k-1})}
{z_{k}(t_{k-1})-z_{k}'(t_{k-1})}\Big) \Big| \leq
C|z_{k}''(t_{k-1})-z_{k}(t_{k-1})| \leq C\varepsilon, $$

where $t_c$ is the value of $t$ at the tip of the needle,
and C is a constant depending on the minimal distance of
the needle to the rest of the knot.

\smallskip

If the next, $(k-1)$-th chord is long, then the double
integral corresponding to the $k$-th and $(k-1)$-th chords
is at most:

$$ \Big| \int_{t_{k-2}}^{t_c} \Big( \int_{t_{k-1}}^{t_c}
d(ln(z_k-z_k'))- \int_{t_{k-1}}^{t_c} d(ln(z_k''-z_k')) \Big)
d(ln(z_{k-1}-z_{k-1}')) \Big| \leq$$
$$\leq C \varepsilon \Big| \int_{t_{k-2}}^{t_c} d(ln(z_{k-1}-z_{k-1}')) \Big| =
C \varepsilon \Big| ln \frac{z_{k-1}(t_c)-z_{k-1}'(t_c)}{z_{k-1}(t_{k-2})
-z_{k-1}'(t_{k-2})}\Big|\leq$$
$$ \leq CC' \varepsilon,$$

\smallskip

where $C'$ is another constant depending on the ratio of the biggest and
smallest horizontal distance from the needle to the rest of the knot.

\smallskip

If the $(k-1)$-th chord is short, i.e. it connects $z_{k-1}$ and $z_{k-1}'$
that are both on the needle, then we can estimate the double integral
corresponding to the $k$-th and $(k-1)$-th chords:

$$ \Big| \int_{t_{k-2}}^{t_c} \Big( \int_{t_{k-1}}^{t_c} d(ln(z_k-z_k')) -
\int_{t_{k-1}}^{t_c} d(ln(z_k''-z_k')) \Big) \frac{dz_{k-1}'-dz_{k-1}}
{z_{k-1}'-z_{k-1}} \Big| \leq$$

$$\leq C \Big| \int_{t_{k-2}}^{t_c} (z_k''(t_{k-1})-z_k(t_{k-1}))
\frac{dz_{k-1}'-dz_{k-1}}{|z_{k-1}'-z_{k-1}|} \Big| =$$

$$=C \Big| \int_{t_{k-2}}^{t_c} d(z_{k-1}'-z_{k-1}) \Big|
= C|z_{k-1}'(t_{k-2})-z_{k-1}(t_{k-2})|
\leq C\varepsilon.$$

Continuing to go down the needle, we see that the difference between
$Z(T_0)$ and $Z(T_1)$ in degree $n$ is proportional to $(C'')^n \varepsilon$,
for a constant $C'' = max\{C,C'\}$, and by horizontal
deformations we can make $\varepsilon$ tend to zero, therefore the
difference tends to zero. This proves the proposition.

\end{proof}

This proves the invariance of the Kontsevich integral in the class of
Morse knots: To move critical points, one can form a sharp needle using
horizontal deformations only, then shorten or lengthen the needles
arbitrarily, then deform the knot as desired by horizontal deformations.

\bigskip

However, $Z$ is not invariant under ``straightening humps'', i. e.
deformations that change the number of critical points, as shown 
below. (We note that straightening the mirror image of the
hump shown is equivalent to this one, see Section \ref{corrections}
for the details.)

\begin{center}
\begin{picture}(0,0)%
\includegraphics{straighten.pstex}%
\end{picture}%
\setlength{\unitlength}{3947sp}%
\begingroup\makeatletter\ifx\SetFigFont\undefined%
\gdef\SetFigFont#1#2#3#4#5{%
  \reset@font\fontsize{#1}{#2pt}%
  \fontfamily{#3}\fontseries{#4}\fontshape{#5}%
  \selectfont}%
\fi\endgroup%
\begin{picture}(1674,899)(1189,-1323)
\end{picture}%

\end{center}

To fix this problem, we apply a correction, using the following proposition:

\begin{proposition} \label{hump}
Let $K$ and $K'$ be two knots differing only in a small hump in $K'$ that is
straightened in $K$ (as in the figure). Then
$$Z(K')=Z(K)Z(\raisebox{-2mm}{\begin{picture}(0,0)%
\includegraphics{humpcircle.pstex}%
\end{picture}%
\setlength{\unitlength}{3947sp}%
\begingroup\makeatletter\ifx\SetFigFont\undefined%
\gdef\SetFigFont#1#2#3#4#5{%
  \reset@font\fontsize{#1}{#2pt}%
  \fontfamily{#3}\fontseries{#4}\fontshape{#5}%
  \selectfont}%
\fi\endgroup%
\begin{picture}(325,321)(1115,-744)
\end{picture}%
}).$$
\end{proposition}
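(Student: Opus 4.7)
My plan is to isolate the hump in a thin horizontal tangle slice and apply the multiplicativity of $Z$ in conjunction with the moving-critical-points proposition.

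First I would apply Proposition~\ref{mvcrits} to squeeze the hump into a needle-like region of horizontal width $\varepsilon$, without changing $Z(K)$ (or $Z(K')$). I would then slice $K$ by two horizontal planes just above and just below this region, writing $K = T_t \cdot T_h \cdot T_b$, where $T_h$ is the thin slice containing the two critical points of the hump. The same slicing of $K'$ reads $K' = T_t \cdot T_h' \cdot T_b$ with $T_h'$ just a straight strand, so $Z(T_h') = 1$. Multiplicativity then reduces the proposition to identifying $Z(T_h)$ with $Z(\text{hump circle})$, since
\[
  Z(K) = Z(T_t)\, Z(T_h)\, Z(T_b)
  \quad \text{while} \quad
  Z(K') = Z(T_t)\, Z(T_b).
\]

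To evaluate $Z(T_h)$ I would split the chord systems appearing in each summand into three classes: \emph{pure-hump}, with both endpoints on the hump arc between its two critical points; \emph{pure-strand}, with both endpoints on the short straight portion of $T_h$; and \emph{mixed}. The pure-strand chords live on segments of length $O(\varepsilon)$ and so contribute $O(\varepsilon)$. The mixed chords are controlled exactly as in the proof of Proposition~\ref{mvcrits}: each chord endpoint on the hump is paired with the opposite endpoint on the other side of the needle, and the difference of the two logarithmic integrands is bounded by $C\varepsilon$; this estimate propagates through nested long and short chords with only a bounded constant factor per degree, so all mixed-chord terms vanish as $\varepsilon \to 0$. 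What survives is the integral over chord systems with every endpoint on the hump arc, which by Fubini is the Kontsevich integral of the hump arc in isolation. Closing this arc into a circle by a short horizontal arc in a single level $\{t = \text{const}\}$ introduces no new chord integrations (the one-form $(dz-dz')/(z-z')$ vanishes along a level set), and thus identifies $Z$ of the isolated hump with $Z(\text{hump circle})$.

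Substituting back, and using the commutativity of ${\mathcal A}$ together with invertibility of $Z(\text{hump circle})$ as a series with constant term~$1$ to arrange the factor on the correct side, gives the claimed formula. The main obstacle I expect is Step~2, the bookkeeping for mixed chords: one must verify, degree by degree, that nested long and short chords straddling the shrinking hump do not obstruct the $\varepsilon$-decay, exactly in the spirit of Proposition~\ref{mvcrits}. Once that estimate is granted, Fubini and the level-set closing-up argument complete the proof.
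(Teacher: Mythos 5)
Your overall strategy is the paper's strategy in disguise: the paper deduces the proposition from Lemma \ref{nointeraction} (faraway strands don't interact), whose proof is exactly the shrink-and-estimate argument you redo by hand. But as written your reduction has a genuine gap at the slicing step. The middle slab $T_h$ can never consist of the hump alone: every level between the hump's two critical values meets the knot in an even number of points, and the hump strand accounts for three of them, so at least one other strand of $K$ crosses the slab. Consequently $T_h'$ is never ``just a straight strand'' and $Z(T_h')\neq 1$; your ``pure-strand'' chords are not supported on segments of length $O(\varepsilon)$ unless you also shrink the hump vertically; and, most importantly, not all ``mixed'' chords vanish. The needle-type pairing of Proposition \ref{mvcrits} only cancels chords ending on the two $\varepsilon$-close branches of the wiggle; the chords joining the faraway strands to the surviving monotone branch persist and converge to the corresponding chords of $T_h'$. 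So the correct limiting statement is that $Z(T_h)$ approaches (the hump factor)$\,\cdot Z(T_h')$, with the one-strand hump factor central by Lemma \ref{loc} --- which is precisely what Lemma \ref{nointeraction} packages and what the paper invokes; your version, with $Z(T_h')=1$ and all mixed terms vanishing, is not attainable by any isotopy.

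The closing-up step is also wrong as stated. The two endpoints of the hump arc lie on the two slicing planes, hence at different heights, so they cannot be joined by an arc inside a single level $\{t=\mathrm{const}\}$; a horizontal arc would not even give a Morse embedding, and the remark that the one-form vanishes ``along a level set'' has no content, since chord endpoints lie at equal heights by definition. Closing the hump arc into the hump circle necessarily creates one additional maximum and one additional minimum, and the identification of $Z$ of the isolated hump tangle with $Z$ of the hump circle rests on two facts your proposal never uses: chords lying entirely above or below the hump vanish because the extreme one is isolated (the FI/1T relation), and chords joining the hump to the closing arc vanish again by Lemma \ref{nointeraction}. The paper's proof of the proposition consists of exactly these two observations. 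Finally, track carefully which of the two knots acquires the hump factor in your last line: centrality lets you move the factor around, but it does not invert it, so ``arranging it on the correct side'' needs an explicit check rather than an appeal to commutativity.
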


This proposition is a consequence of the following lemma:

\begin{lemma} \label{nointeraction}
{\bf Faraway strands don't interact.}
Let $K$ be a Morse knot with a distinguished tangle $T$, with $t_{bot}$
and $t_{top}$ being the minimal and maximal values of $t$ on $T$. Then,
in the formula of the Kontsevich integral, for those components whose
projection on the $t_j$ axis is contained in $[t_{bot},t_{top}]$,
it is enough to consider pairings where either both points $(z_j,t_j)$ and
$(z_j',t_j')$ belong to $T$, or neither do.
\end{lemma}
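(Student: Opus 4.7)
The plan is to reduce the statement to a tangle calculation and then exploit horizontal invariance of $Z$ to push the non-$T$ strands arbitrarily far from $T$, after which mixed chord contributions become visibly negligible.

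First I would isolate the relevant sub-tangle. Let $T' \subset K$ consist of all arcs of $K$ whose $t$-values lie in $[t_{bot}, t_{top}]$; then $T \subset T'$, and $T'$ contains in addition some \emph{through strands} coming from the rest of $K$. By multiplicativity of $Z$ (Fubini), it suffices to prove the statement for $Z(T')$: namely, that pairings at heights $t_j \in [t_{bot}, t_{top}]$ with one endpoint on $T$ and the other on a through strand contribute nothing.

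Next, for each $d > 0$ I would exhibit a horizontal deformation of $T'$ fixing the top and bottom to a tangle $T'_d$ in which every through strand lies at horizontal distance at least $d$ from $T$, while the pairwise distances between distinct through strands stay bounded below independently of $d$ --- achieved, for instance, by translating each through strand uniformly outward in some horizontal direction. By Lemma \ref{braidhor}, $Z(T') = Z(T'_d)$ for every $d$.

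I would then fix the degree $m$ of $Z$ and estimate the contribution of a pairing containing at least one mixed chord. A mixed chord at height $t_k$ contributes to the integrand the factor $\frac{d(z_k - w_k)}{z_k - w_k}$ with $|z_k - w_k| \geq d$, hence bounded in absolute value by $\frac{|dz_k| + |dw_k|}{d}$; integrating over $t_k$ gives a factor of order $1/d$ (analogous to the estimates in Proposition \ref{mvcrits}). The remaining factors of the integrand come from pure-$T$ or pure-rest chords and are bounded uniformly in $d$ by our choice of deformation. Since at fixed degree $m$ there are only finitely many pairings, the total contribution of mixed pairings at degree $m$ is $O(1/d)$; letting $d \to \infty$ while using $Z(T') = Z(T'_d)$ forces this contribution to vanish at every degree, which is the claim. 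The main obstacle will be carrying out the deformation $T' \rightsquigarrow T'_d$ cleanly: one needs to push through strands away from $T$ without allowing distinct through strands to collide, so that the integrals attached to pure-rest chords remain bounded uniformly in $d$. A piecewise description of the horizontal displacement, adapted to the combinatorics of the through strands, accomplishes this, but some care is required to check that the resulting family is a legitimate horizontal deformation fixing the endpoints of $T'$.
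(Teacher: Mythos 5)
Your overall strategy --- use invariance to pass to an embedding in which mixed chords are visibly negligible, then take a limit --- has the same shape as the paper's argument, but the paper makes the \emph{numerators} small (it shrinks $T$, and the part of the knot lying in the slab $[t_{bot},t_{top}]$, into narrow boxes of width $\varepsilon$, so each mixed chord contributes $O(\varepsilon)$ against a denominator bounded below), whereas you try to make the \emph{denominators} large. As written, your version has a genuine gap at its linchpin: you cannot simultaneously (a) fix the top and bottom of $T'$, which is what you need in order to quote Lemma \ref{braidhor} (which moreover does not apply to $T'$ at all, since $T'$ contains the critical points of $T$, and possibly of the through strands; you would need Proposition \ref{hor} together with Proposition \ref{mvcrits} at the level of the whole knot), and (b) translate each through strand uniformly outward to distance $d$. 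If you fix the boundary, every through strand must make a horizontal excursion of length about $d$ inside the slab near the heights $t_{bot}$ and $t_{top}$; along that excursion the distance to $T$ is not $\geq d$, and a single mixed chord integrates to a logarithm of a ratio of endpoint separations, which is of order $1$ (or even $\log d$), not $O(1/d)$, so the degree-$m$ estimate collapses. If instead you translate the strands rigidly, endpoints included, then $Z(T')=Z(T'_d)$ is unjustified and in general false: the Kontsevich integral of a tangle is genuinely sensitive to the positions of its boundary points, as Lemma \ref{rescale2} shows explicitly.

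The argument can be repaired, but only by leaving the tangle and working with the whole knot: translate the through strands rigidly by one common vector (this keeps their mutual distances and the pure-rest chord integrals literally unchanged, and keeps the numerators' total variation bounded, so your $O(1/d)$ bound for slab-interior mixed chords is then correct), extend the translation to the faraway part of $K$, and place the re-attachment excursions at heights just outside $[t_{bot},t_{top}]$ so that they only affect chords outside the slab; invariance of $Z(K)$ in the class of Morse knots (Propositions \ref{hor} and \ref{mvcrits}) then replaces the illegitimate appeal to Lemma \ref{braidhor}, and the limit $d\to\infty$ gives the claim in the same sense in which the paper's shrinking argument gives it. Alternatively, and more economically, shrink as the paper does: making both boxes narrow sidesteps the boundary tension entirely, since the smallness comes from the numerators rather than from sending anything to infinity.
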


\begin{center}
\begin{picture}(0,0)%
\includegraphics{tink.pstex}%
\end{picture}%
\setlength{\unitlength}{2487sp}%
\begingroup\makeatletter\ifx\SetFigFont\undefined%
\gdef\SetFigFont#1#2#3#4#5{%
  \reset@font\fontsize{#1}{#2pt}%
  \fontfamily{#3}\fontseries{#4}\fontshape{#5}%
  \selectfont}%
\fi\endgroup%
\begin{picture}(5953,3054)(514,-2656)
\end{picture}%

\end{center}

\begin{proof}(Sketch)
We can shrink the tangle $T$ into a narrow box of width $\varepsilon$, and
do the same for the rest of the knot between heights $t_{bot}$ and $t_{top}$.
It's easy to see that the value of the integral corresponding to ``long''
chords (connecting the tangle to the rest of the knot) then tends to zero.
\end{proof}

Now let us sketch the proof of Proposition \ref{hump}:
\begin{proof}
The proposition follows by choosing $T$ to include just the hump, i.e.
there will be no long chords connecting the hump to the rest of the knot
in $K'$ or in \raisebox{-2mm}{}.

Also, there cannot be any chords above or below the hump, since the highest
(resp. lowest) of those would be an isolated chord.
\end{proof}

Detailed proofs of the previous two statements can be found in \cite{chmutov}

Since the constant term of $Z(\raisebox{-2mm}{})$ is $1$, it has a
reciprocal in the graded completion
of ${\mathcal A}$ (i.e. formal infinite series of chord diagrams). Using
this we can now define an honest knot invariant $Z'$ by setting
$$Z'(K)=\frac{Z(K)}{Z(\raisebox{-2mm}{})^{c/2}},$$ where $c$ is
the number of
critical points in the Morse embedding of $K$ that we use to compute $Z$.

\subsection{Universality}
Here we state Kontsevich's theorem, and the main idea of the proof, which will apply in the 
case of the extension to graphs word by word. A complete, detailed proof can 
be found in \cite{chmutov} or \cite{dbn}, and Kontsevich's paper \cite{konts}.

\begin{theorem}
Let $w$ be a weight system of order $n$. Then there exists a Vassiliev invariant of
order $\leq n$ whose weight system is $w$, given by the formula
$$K \mapsto w(Z'(K)).$$
\end{theorem}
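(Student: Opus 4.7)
The plan is to show that the map $v(K) := w(Z'(K))$ is a Vassiliev invariant of order at most $n$ and that its induced weight system (in the sense of the map $\mathcal{V}_n/\mathcal{V}_{n-1} \hookrightarrow F_n$ defined earlier) coincides with $w$. To make sense of the composition, we first extend $w$ to a linear functional on the graded completion of $\mathcal{A}$ by declaring it to vanish on $\mathcal{A}_k$ for $k \neq n$; then $w \circ Z'$ is a well-defined $\mathbb{C}$-valued knot invariant, since $Z'$ is itself a knot invariant by the preceding propositions and the renormalization by $Z(\text{hump})^{-c/2}$.

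The key step is the following local computation: for a singular knot $K$ with $m$ transverse double points, extended via the Vassiliev skein $f(\doublepoint) = f(\overcrossing) - f(\undercrossing)$, one has
$$Z'(K) = D_K + (\text{terms of degree} > m),$$
where $D_K \in \mathcal{A}_m$ is the chord diagram of $K$. I would establish this by induction on $m$, the key case being $m=1$. There, I would shrink a small disk around the double point to size $\varepsilon$ using Proposition~\ref{hor} (horizontal invariance), and then compare $Z(K_+)$ and $Z(K_-)$ inside and outside the disk separately using the multiplicativity (Fubini) property. Outside the disk the two integrals agree; inside, the difference of the degree-one terms is $\frac{1}{2\pi i}\int d\log(z-z')$ taken around a loop encircling the crossing, which by the residue calculation equals exactly one chord connecting the two strands at the double point. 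Higher-degree contributions from inside the disk are shown to vanish as $\varepsilon \to 0$ by the same needle-type estimates used in Proposition~\ref{mvcrits}. The renormalization factor $Z(\text{hump})^{-c/2}$ begins with $1$ in degree zero, so it does not affect the leading-degree part. Iterating the $m=1$ identity at each double point yields the claim.

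With this local formula in hand, both conclusions are immediate. First, if $K$ has $m > n$ double points, then every nonzero term of $Z'(K)$ sits in degree $\geq m > n$, on which $w$ vanishes, so $v(K) = 0$; hence $v$ is Vassiliev of order at most $n$. Second, when $K$ has exactly $n$ double points, only the leading-degree term $D_K$ contributes, and $v(K) = w(D_K)$; this says precisely that the weight system of $v$, viewed as an element of $F_n$ via the map $\mathcal{V}_n/\mathcal{V}_{n-1} \hookrightarrow F_n$, equals $w$.

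The main obstacle is the $m=1$ leading-term computation: one has to confirm that after the shrinking, the degree-one contribution of the small disk really is a single chord with coefficient $1$, and that all higher-degree contributions from the disk (including those involving further chords that touch the disk) decay with $\varepsilon$. This is a close analogue of the needle estimates already carried out in Proposition~\ref{mvcrits}, so I would model the argument on that proof rather than redo the bookkeeping from scratch.
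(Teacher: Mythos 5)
Your proposal takes essentially the same route as the paper's sketch: reduce to showing $Z'(K_D)=D+(\text{higher order terms})$, note the correction factor starts with $1$, localize at each double point using multiplicativity and the fact that faraway strands don't interact, and identify the lowest-degree part of the difference of $Z$ on the two resolutions of a double point as a single chord with coefficient $1$ via the Cauchy/residue computation. One harmless overstatement: the higher-degree contributions from inside the shrunken disk do \emph{not} decay as $\varepsilon\to 0$ (powers of the single-chord term survive), but this does not matter, since the claim only concerns the lowest-degree term and all such contributions sit in degree strictly above it.
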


This property of $Z'$ is referred to as being a {\it universal finite type invariant}.

\begin{proof}(Sketch.) 
Let $D$ be a chord diagram of order $n$, and $K_D$ a singular knot
with chord diagram $D$. The theorem follows from the fact that
$$Z'(K_D)=D+\{{\rm higher \thinspace order \thinspace terms}\}.$$
Since the denominator of $Z'$ always begins with $1$ (the unit of ${\mathcal A}$),
it is enough to prove that
$$Z(K_D)=D+\{{\rm higher \thinspace order \thinspace terms}\}.$$
Because of the factorization property and the fact that faraway strands don't
interact (Lemma \ref{nointeraction}), we can think locally. Around a 
single double point, we need to compute the difference of $Z$ on an over-crossing
and an under-crossing. These can be deformed as follows:
$$ Z(\overcrossing)-Z(\undercrossing)=
Z \big(\raisebox{-0.08 in}{\begin{picture}(0,0)%
\includegraphics{over.pstex}%
\end{picture}%
\setlength{\unitlength}{1342sp}%
\begingroup\makeatletter\ifx\SetFigFont\undefined%
\gdef\SetFigFont#1#2#3#4#5{%
  \reset@font\fontsize{#1}{#2pt}%
  \fontfamily{#3}\fontseries{#4}\fontshape{#5}%
  \selectfont}%
\fi\endgroup%
\begin{picture}(611,894)(1474,-2488)
\end{picture}%
 } \big)-
Z \big(\raisebox{-0.08 in}{\begin{picture}(0,0)%
\includegraphics{under.pstex}%
\end{picture}%
\setlength{\unitlength}{1342sp}%
\begingroup\makeatletter\ifx\SetFigFont\undefined%
\gdef\SetFigFont#1#2#3#4#5{%
  \reset@font\fontsize{#1}{#2pt}%
  \fontfamily{#3}\fontseries{#4}\fontshape{#5}%
  \selectfont}%
\fi\endgroup%
\begin{picture}(549,848)(2014,-2473)
\end{picture}%
 } \big).$$
Since the crossings on the bottom are now identical, by the factorization
property, it's enough to consider 
$$ Z \big( \raisebox{-0.06 in}{\begin{picture}(0,0)%
\includegraphics{loop.pstex}%
\end{picture}%
\setlength{\unitlength}{1342sp}%
\begingroup\makeatletter\ifx\SetFigFont\undefined%
\gdef\SetFigFont#1#2#3#4#5{%
  \reset@font\fontsize{#1}{#2pt}%
  \fontfamily{#3}\fontseries{#4}\fontshape{#5}%
  \selectfont}%
\fi\endgroup%
\begin{picture}(611,734)(1474,-2328)
\end{picture}%
 } \big)-
Z \big( \raisebox{-0.06 in}{\begin{picture}(0,0)%
\includegraphics{straight.pstex}%
\end{picture}%
\setlength{\unitlength}{1342sp}%
\begingroup\makeatletter\ifx\SetFigFont\undefined%
\gdef\SetFigFont#1#2#3#4#5{%
  \reset@font\fontsize{#1}{#2pt}%
  \fontfamily{#3}\fontseries{#4}\fontshape{#5}%
  \selectfont}%
\fi\endgroup%
\begin{picture}(549,698)(2014,-2323)
\end{picture}%
 } \big).$$

$ Z \big( \raisebox{-0.06 in}{ } \big)$ equals $1$ 
(the unit of ${\mathcal A}(\uparrow_2)$, where ${\mathcal A}(\uparrow_2)$
stands for chord diagrams on two upward oriented vertical strands), as 
both $z_i(t)$ and $z_i'(t)$ are constant.

In $Z \big( \raisebox{-0.06 in}{ } \big)$, the first
term is $1$, as always, so this will cancel out in the difference.
The next term is the chord diagram with one single chord, and this
has coefficient $\frac{1}{2 \pi i} \int_{t_{min}}^{t_{max}} 
\frac{dz-dz'}{z-z'} =1,$ by Cauchy's theorem. 

So the lowest degree term of the difference is a single chord with
coefficient one.

Now putting $K_D$ together, the lowest degree term in $Z(K_D)$ will
be a chord diagram that has a single chord for each double point,
which is exactly $D$.

\end{proof}

\section{The naive extension}

\subsection{Knotted trivalent graphs}
Let us first state the necessary definitions and basic properties.

\smallskip

A {\it trivalent graph} is a graph which has three edges meeting at each
vertex. We will require that all edges be oriented. We allow multiple edges;
loops (i.e. edges that begin and end at the same vertex); and circles (i.e.
edges without a vertex).

\smallskip

A {\it knotted trivalent graph (KTG)} is an isotopy class of embeddings of a
fixed such graph in $\mathbb{R}^3$. (So in particular, knots and links
are knotted trivalent graphs.)
We will also require edges to be equipped with a framing, i. e.
a choice of a normal vector field, which is smooth along
the edges and is chosen so that the three normal vectors agree at the
vertices. We can imagine the graph being a
``band graph'', with thickened edges, as shown in the picture.

\begin{center}
\begin{picture}(0,0)%
\includegraphics{band.pstex}%
\end{picture}%
\setlength{\unitlength}{2131sp}%
\begingroup\makeatletter\ifx\SetFigFont\undefined%
\gdef\SetFigFont#1#2#3#4#5{%
  \reset@font\fontsize{#1}{#2pt}%
  \fontfamily{#3}\fontseries{#4}\fontshape{#5}%
  \selectfont}%
\fi\endgroup%
\begin{picture}(1866,1766)(1372,-2534)
\end{picture}%

\end{center}

Isotopy classes of KTG's are in one to one correspondence
with graph diagrams (projections onto a plane with only transverse double
points preserving the over- and under-strand information at the crossings),
modulo the Reidemeister moves $R2$, $R3$ and $R4$ (first defined for
graphs in \cite{yamada}). $R1$ is omitted because
we're working with framed graphs, $R2$ and $R3$ are the same as in the knot
case. $R4$ involves moving a strand in front of or behind a vertex:

\begin{center}
\input r4.pstex_t
\end{center}

\smallskip
{\it Trivalent tangles} are defined the same way as tangles, with the difference
that we allow trivalent vertices inside the cube. In other words, a trivalent
tangle is a uni-trivalent graph embedded in a cube, such that the positions of
the univalent vertices are fixed on the bottom and top faces of the cube, as before.
Here we require the edges to be oriented and framed, and regard trivalent 
tangles up to boundary-fixing ambient isotopy.

Trivalent tangles with the appropriate numbers of edge endings can be
multiplied the same way as ordinary tangles, by stacking the cubes and
rescaling.

\smallskip
There are four operations defined on KTG's:

\smallskip

Given a trivalent graph (knotted or not) $\Gamma$ and an edge $e$ of 
$\Gamma$, we can {\it switch the orientation} of $e$. We denote the
resulting graph by $S_e(\Gamma)$.

We can also {\it delete} the edge
$e$, which means the two vertices at the ends of $e$ also cease to exist to
preserve the trivalence. To do this, it is required that the orientations of the
two edges connecting to $e$ at either end match. The resulting graph will be
denoted by $d_e(\Gamma)$.

\smallskip

{\it Unzipping} the edge $e$ (denoted $u_e(\gamma)$ for a KTG $\gamma$, 
see figure below)
means replacing it by two edges that are very
close to each other - to do this we use the framing, and 
thus unzip is only well defined on a KTG. The
two vertices at the ends of $e$ will disappear. (It can be imagined as cutting
the band of $e$ in half lengthwise.) Again, the orientations have to match,
i.e. the edges at the vertex where $e$ begins have to both be incoming,
while the edges at the vertex where $e$ ends must both be outgoing.

\begin{center}
\input unzip.pstex_t
\end{center}

We would like to stress here that the unzip of an edge of an unknotted
trivalent graph $\Gamma$ is not well-defined: there are two possible
ways of connecting the two edges on the left with the two edges on the
right. In the future, when we write ``$u_e(\Gamma)$'', we will be refering 
to the combinatorial object (skeleton) behind the well-defined knotted
trivalent graph $u_e(\gamma)$.

Another approach to handle this inconvenience is to have trivalent
graphs (knotted or not) come equipped with a vertex orientation, i.e. a
cyclic ordering of the edges at each vertex, which makes the unzip
well-defined. \cite{murakami}, for example, uses this version of the 
definition of KTG's.

\smallskip

Given two graphs with selected edges $(\Gamma,e)$ and $(\Gamma', f)$,
the {\it connected sum} of these graphs along the two chosen edges,
denoted $\Gamma \#_{e,f} \Gamma'$, is
obtained by joining $e$ and $f$ by a new edge. For this to be well-defined,
we also need to specify the direction of the new edge, the framing on it,
and, thinking of $e$ and $f$ as bands, which side of the bands the new
edge is attached to. To compress notation, let us declare that the new edge
be oriented from $\Gamma$ towards $\Gamma'$, have no twists,
and, using the blackboard framing, be attached to the right side
of $e$ and $f$, as shown:

\begin{center}
\input connsum.pstex_t
\end{center}

\subsection{The algebra ${\mathcal A}(\Gamma)$}
The extended integral will take its values in the algebra
$\mathcal{A}(\Gamma)$,
which consists of chord diagrams on the {\it skeleton} $\Gamma$, the trivalent
graph (as a combinatorial object, not as embedded in $\mathbb{R}^3$), and
is again factored out by the same four term relations, along with one
more class of relations called the vertex invariance, or $VI$, relations
(branching relation in \cite{murakami}):

\begin{center}
\input vi.pstex_t
\end{center}

Here, the sign $(-1)^{\rightarrow}$ is $-1$ if the edge the chord is ending
on is oriented to be outgoing from the vertex, and $+1$ if it's incoming.

\bigskip

The vertex invariance relation arises, topologically, the same way as the
$4T$ relation, i.e. any weight system of a finite type invariant of knotted
trivalent graphs (defined the same way as for knots) will be zero on the
above sum of chord diagrams. To prove this we can either use the same isotopy 
that was used for the $4T$ relation, or Reidemeister 4.

\smallskip

We are going to study invariants of framed graphs, hence we will not
factor out by the one-term (or framing independence) relation.

There are several reasons to do
this, one being that the unzip operation uses the framing, so it's
natural to want invariants to be perceptive of it as well. Also, while
up to isotopy, all the information in a framing of a knot can be described by
an integer (the number of twists), this is no longer true for graphs.
A ``local'' twist of an edge can, through an isotopy, become a ``global''
twist of two other edges connecting to it at a vertex. Therefore, one can argue
that framing is a more interesting property of graphs than of knots.
Motivation for studying a framed version of the Kontsevich integral for
links arises from the relationship to 3-manifolds, through surgery 
(see \cite{dbn}; \cite{lmo}; \cite{ohts}; \cite{wat}).

Now let us define operations on the spaces ${\mathcal A}(\Gamma)$:

\smallskip

Given a graph $\Gamma$ and an edge $e$, the orientation switch
operation is a linear map $S_e:{\mathcal A}(\Gamma) \to {\mathcal A}(s_e(\Gamma))$
that multiplies a chord diagram $D$ by $(-1)^k$ where $k$ is the number of chords
in $D$ ending on $e$. This generalizes the antipode map on Jacobi diagrams, which 
corresponds to the orientation reversal of knots (see \cite{ohts}, p.136).

\smallskip

Edge delete is a linear map
$d_e:{\mathcal A}(\Gamma) \to {\mathcal A}(d_e(\Gamma))$, defined as follows:
when the edge $e$ is deleted, all diagrams that had a chord ending on $e$ will
become zero, with all other chords unchanged. Edge delete is the generalization
of the co-unit map of \cite{ohts} (p.136), and \cite{dbn}.

\smallskip
There is an operation on ${\mathcal A}(O)$ corresponding to the cabling of knots:
references include \cite{dbn} (splitting map) and \cite{ohts} (co-multiplication). 
The graph unzip operation is
the graph analogy of cabling, so the corresponding map is analogous as well:

Unzip is a linear map
$u_e:{\mathcal A}(\Gamma) \to {\mathcal A}(u_e(\Gamma)).$
When $e$ is unzipped, each chord that ends on it will be
replaced by a sum of two chords, one ending on each new edge (i.e., if
$k$ chords end on $e$, then $u_e$ will send this particular chord diagram
to a sum of $2^k$ chord diagrams). 

\smallskip

For graphs $\Gamma$ and $\Gamma'$, with edges $e$ and $e'$,
the connected sum
$\#_{e,e'}:{\mathcal A}(\Gamma) \times {\mathcal A}(\Gamma')
\to {\mathcal A}(\Gamma \#_{e,e'} \Gamma')$
is defined in the obvious way, by performing the connect sum
operation on the skeletons and not changing the chords in any way. This is
well defined due to the $4T$ and $VI$ relation. (What needs to be proved
is that we can move a chord ending over the attaching point of the new edge;
this is done in the same spirit as the proof of Lemma 3.1 in \cite{dbn}, 
using ``hooks''; see also \cite{murakami}, figure 4.)

\smallskip

It is easy to check that all the operations are well-defined (agree with
the 4T and VI relations).

\subsection{The naive Kontsevich integral of a graph}
We can try to extend the definition to knotted trivalent graphs
(and trivalent tangles) the natural
way: consider a Morse embedding of the graph (or tangle) in
$\mathbb{R}^3$, and define
the integral by the same formula, requiring that $t_1,...,t_n$ are
non-critical and also not the heights of vertices. (We do not do any
correction or renormalization yet.)

\subsection{The good properties} The ordinary Kontsevich integral 
has several nice properties which, we can realistically hope, should
hold true for the extension and corresponding graph operations: 

\begin{itemize}
\item {\bf Factorization property.}

The extension of the integral
would obviously preserve the factorization property
of the Kontsevich integral, meaning that it would be multiplicative with
respect to stacking trivalent tangles and the (vertical) connected
sum of graphs (i.e. it would commute with the connected sum operation).

\smallskip

\item {\bf Nice behavior under orientation switches.}

$Z$ will commute with the orientation switch operation due to
the signs $(-1)^{\downarrow}$ in the formula that defines $Z$.
(In other words, when we switch the orientation of an edge, the
coefficients of each chord diagram in the result of the integral
will be multiplied by $(-1)^k$, where $k$ is the number of
chords ending on the edge we switched the orientation of).

\smallskip

\item {\bf Nice behavior under the vertical edge delete operation.}

Let us (wrongly) assume that our extension is a convergent knotted graph
invariant. Consider an embedding of the graph in which the edge
$e$ to delete is a straight vertical line, with the top
vertex forming a $Y$, the bottom vertex a $\lambda$.
(Obviously such an embedding exists within each isotopy class.)

Now, if we delete the edge $e$, then in the result of the integral, every chord
diagram in which a chord ended on $e$ would disappear (declare these to be
zero), and the coefficient of any other chord diagram stays unchanged (as
the integral used to compute it is unchanged). In other words, the extended
Kontsevich integral commutes with the edge delete operation.

\smallskip

\item {\bf Nice behavior under vertical unzip.}

Let the embedding of the graph be as above.
When we unzip the vertical edge $e$, we do it so that the two new edges
are parallel and very close to each other.

In the result of the integral, the chord diagrams that contained $k$
chords ending on $e$ will be replaced by a sum of $2^k$ chord diagrams,
as each chord is replaced by ``the sum of two chords'', one of them ending
on the first new edge, the other ending on the second. (Since for each choice
of $z_i$ on $e$ we will now have two choices.) The coefficient for
the sum of these new diagrams will be the same as the coefficient of their
``parent'', (since the two new edges are arbitrarily close to each other).

If we were to choose a chord to have both ends on the two new parallel
edges, the resulting integral will be zero, as $z_i-z_i'$ will be a constant
function.

Again, the coefficients of the diagrams that don't involve chords ending on
$e$ are unchanged.
Therefore, the extended Kontsevich integral, assuming it exists and is an
invariant, will commute with the unzip operation.
\end{itemize}

\subsection{The problem}

The problem with the extension is that the integral, as
defined above, is divergent. Causing this are the possible short
chords near a vertex (i.e. those not separated from a vertex by another chord
ending).
These are just like the isolated chords in the knot case,
but, contrary to the knot case, we have no reason to factor out by all the
chord diagrams containing such chords.

Also, if we want to drop the 1T relation for the sake of working with
framed graphs, we have to fix the divergence coming from the isolated
chords near critical points as well.

\section{Eliminating the divergence}

To eliminate the divergence we have to re-normalize at the vertices, and
critical point. We
do this using a simple version of a renormalization technique from quantum 
field theory: we know the exact type of divergence, and thus we ``divide 
by it'' to get a convergent integral.

\subsection{The re-normalized integral $Z_2$}
First let us restrict our attention to a vertex of a ``$\lambda$'' shape.
Fix a scale $\mu$ and chose a small $\varepsilon$. We change the
integral at the vertex by ``opening up'' the two lower strands at
a distance $\varepsilon$ from the vertex, to a width $\mu$ at the
height of the vertex.

\begin{center}
\input renormalize.pstex_t
\end{center}

The old strands (solid lines on the picture) up to distance $\varepsilon$
from the vertex, and above the vertex, will be ``globally active'',
meaning that we allow any chords (long or short) to end on them.
The opening strands (dashed lines on the picture) are ``locally
active'', meaning that we allow chords between them, but chords
from outside are not allowed to end on them. We define the value
of $Z_2$ as the limit of this new integral as $\varepsilon$ tends
to zero.

\smallskip

We will do the same to a vertex of a ``$Y$''-shape, however,
we will have to restrict our attention to these two types of vertices.
(I.e. we do not allow vertices to be local minima or maxima.)
Of course, any graph can be embedded in ${\mathbb R}^3$ in such a way
that all vertices are of one of these two types, but this will cause
a problem with the invariance of $Z_2$,
which will need to be fixed.

\smallskip

To get an invariant of framed graphs, we use the same method to
re-normalize at the critical points and thereby make isolated chords
cause no divergence, this is why we can drop the one term (framing
independence) relation.

\begin{proposition} \label{graphconvergence}
The re-normalized integral $Z_2$ is convergent.
\end{proposition}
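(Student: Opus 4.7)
The plan is to separate two independent convergence issues: (i) for each fixed $\varepsilon>0$, the regularized integral is finite; and (ii) as $\varepsilon\to 0$, the resulting family has a limit. Both are proved by reducing to the analysis of Section~3.3 applied to the modified skeleton.

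For (i), I would repeat the potential-divergence analysis from Section~3.3 verbatim on the renormalized picture. The integrand can only fail to be integrable where $z_i-z_i'$ becomes arbitrarily small, i.e.\ near critical points and vertices. For a short chord $c_k$ that is separated from a critical point or a vertex by another ``long'' chord $c_{k+1}$, the estimate $\bigl|\int d(\ln(z_{k+1}-z_{k+1}'))\bigr|\le C|z_k-z_k'|$ still applies, so the long-chord contribution absorbs the small denominator of $c_k$ exactly as in the knot case. The only chords not treated this way are isolated short chords ending arbitrarily close to a critical point or vertex. In the original setup these caused divergence because the two strands meet in a limit; in the renormalized setup, within the locally active region at each critical point or vertex the two strands are held apart by the prescribed opening, so the denominator $z_i-z_i'$ is bounded below by a positive function of the height that does not vanish inside the locally active region for any $\varepsilon>0$. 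Hence every potentially dangerous integral is finite at fixed $\varepsilon$.

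For (ii), the limit $\varepsilon\to 0$ should be handled by a locality-and-scaling argument. Outside a neighbourhood of the critical points and vertices the integrand is independent of $\varepsilon$, so all $\varepsilon$-dependence sits in a bounded set of ``local'' pieces, one per critical point or vertex. Using the factorization property of $Z$ together with Lemma~\ref{nointeraction} (adapted to the renormalized picture: chords connecting the locally active strands to the rest of the graph contribute an amount that is $O(\varepsilon)$ because the locally active region lives in a horizontal slab of thickness $\varepsilon$), one can factor the full integral as a product of local vertex/critical-point pieces times a global piece that is continuous in $\varepsilon$. Each local piece, after rescaling the vertical $\varepsilon$-slab to unit thickness, becomes an integral over a domain whose geometry depends only on $\mu$ and the combinatorial type ($\lambda$, $Y$, or critical point), with a bounded and $\varepsilon$-independent integrand on each connected component of the rescaled simplex. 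The limit is therefore the value of this fixed finite integral.

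The main obstacle is the honest bookkeeping of ``mixed'' chords, i.e.\ chords with one endpoint on a globally active strand and the other on a strand that is locally active at a different vertex or critical point. One needs to check that such mixed chords either (a) sit in the standard regime and are handled by the long-chord/short-chord cancellation, or (b) carry a factor of $\varepsilon$ coming from the vertical width of the locally active region where they land, and so vanish in the limit. A clean way to organise this is to slice the graph horizontally so that each slab contains at most one critical point or vertex, apply the factorization property to write $Z$ as a product over slabs, and then verify each slab separately, which turns the problem into finitely many uniform estimates of the type already carried out in the proof of Proposition~\ref{mvcrits}.
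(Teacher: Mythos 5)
Your part (i) is fine, and essentially trivial: for fixed $\varepsilon$ the opened strands keep all denominators bounded away from zero near the renormalized points, and the standard long-chord estimate of Section 3.3 covers the rest. The gap is in part (ii), which is the actual content of the proposition. You claim that all $\varepsilon$-dependence sits in local pieces which, after rescaling the $\varepsilon$-slab to unit thickness, become finite $\varepsilon$-independent integrals, while the remaining ``global'' part is continuous in $\varepsilon$. Neither half of this is true. The locally active slab alone diverges: its two strands open from the separation $d(\varepsilon)=|z_i(t_v-\varepsilon)-z_i'(t_v-\varepsilon)|$ up to the fixed width $\mu$, so already a single chord there contributes $\ln\big(\mu/d(\varepsilon)\big)\to\infty$; after your rescaling the domain still depends on $\varepsilon$ through its bottom width $d(\varepsilon)$, and the integrand is not uniformly bounded. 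Likewise the global part is not continuous at $\varepsilon=0$: a short chord in the globally active region integrated up to $t_v-\varepsilon$ contributes $\ln\big(d(\varepsilon)/|z_i(t_{i-1})-z_i'(t_{i-1})|\big)\to-\infty$. Convergence holds only because these two logarithms cancel: for the topmost short chord the sum of the globally active and locally active contributions equals $\ln\big(\mu/|z_i(t_{i-1})-z_i'(t_{i-1})|\big)$, which is independent of $\varepsilon$ and leaves only an integrable logarithmic singularity in the remaining (lower) chord variables, and one iterates down the stack of short chords. Exhibiting this cancellation is the whole proof in the paper, and it is precisely the step absent from your plan; no decomposition into separately convergent local and global factors can replace it, because neither factor converges on its own.

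A smaller point: the ``main obstacle'' you single out, mixed chords with one endpoint on a locally active strand belonging to a distant vertex or critical point, does not arise at all — by the definition of the renormalization, chords from outside are forbidden from ending on locally active strands, so the only new phenomenon to control is the short-chord cancellation above; the slab-by-slab factorization and the estimates of Proposition~\ref{mvcrits} are not needed for convergence (they enter later, for the invariance statements).
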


\begin{proof}
It suffices to consider the case of a $\lambda$-shaped vertex, the other 
cases being strictly similar. Let us fix a $\lambda$-shaped vertex $v$.
The globally active part corresponding to the highest short chord
$c_i$ can be computed as below. By Lemma \ref{nointeraction}, we do
not need to consider long chords ending on the globally active part,
so we only need to deal with short chords.

$$\int_{t_{i-1}}^{t_v-\varepsilon}\frac{dz_i-dz_i'}{z_i-z_i'} =
\int_{t_{i-1}}^{t_v-\varepsilon}d ln(z_i-z_i') =
ln \Big( \frac{z_i(t_v-\varepsilon)-z_i'(t_v-\varepsilon)}
{z_i(t_{i-1})-z_i'(t_{i-1})} \Big).$$

The locally active part on the other hand:
$$\int_{t_v-\varepsilon}^{t_v} d(ln(z_i-z_i')) =
ln \Big(\frac{\mu}{z_i(t_v-\varepsilon)-z_i'(t_v-\varepsilon)} \Big).$$

The integral for this highest chord is the sum of these, and is
therefore equal to:
$$ln \Big( \frac{\mu}{z_i(t_{i-1})-z_i'(t_{i-1})} \Big) = 
ln \mu - ln(z_i(t_{i-1})-z_i'(t_{i-1})) .$$

If there is another short chord underneath, $c_{i-1}$, then
$z_i(t_{i-1})=z_{i-1}$, and $z_i'(t_{i-1})=z'_{i-1}$
(or vice versa, which does not matter in the value of $Z$).
So, proceeding in a similar fashion as above, the double 
integral corresponding to the two chords is:

$$\int_{t_{i-2}}^{t_v} \Big( ln \mu - 
ln (z_{i-1}-z'_{i-1}) \Big) dln (z_{i-1}-z'_{i-1}) =$$

$$=\Big[- \frac{1}{2} \big(ln \mu - ln (z_{i-1}-z'_{i-1})\big)^2 \Big]_{t_{i-2}}^{t_v}=
\frac{1}{2} \Big( ln \mu -ln \big(z_{i-1}(t_{i-2})-z'_{i-1}(t_{i-2})\big) \Big)^2. $$

We continue in this fashion for as many short chords as
there are between $t_v$ and the next critical or vertex level below,
let us denote this level by $t_c$. For $k$ short chords, we see that the value
of the integral between the two critical levels is:

$$\frac{1}{k!} \Big( ln \mu -ln\big(z_{i-k+1}(t_c)-z'_{i-k+1}(t_c)\big) \Big)^k.$$

If at the level $t_c$, the critical point or vertex involves the
same two strands, then $z_{i-k+1}(t_c)-z'_{i-k+1}(t_c)= \mu$,
due to the renormalization at that critical point or vertex, so the
result is $\frac{1}{k!}(ln \mu -ln \mu)=0$. If it does not involve the same two 
strands, then 
$$\big| \frac{1}{k!} (ln \mu -ln\big(z_{i-k+1}(t_c)-z'_{i-k+1}(t_c)\big) \Big| \leq C,$$
where C is a constant that depends on the distance between the two strands at 
the level $t_c$, which is independent of $\varepsilon$.

Therefore, the integral is convergent, and remains convergent as we take 
$\varepsilon \rightarrow 0$.

\end{proof}

\subsection{The good properties}

\smallskip

Let us call the deletion (respectively, unzip) of an edge that is embedded as
a vertical line segment {\it vertical edge delete} (respectively, {\it vertical 
unzip}). By {\it vertical connected sum}, we mean placing one KTG above the another and
connecting them by an edge that is a vertical line segment.

\begin{theorem} \label{goodprop}
$Z_2$
is invariant under horizontal deformations that leave the critical points
and vertices fixed,
and rigid motions of the critical points
and vertices. $Z_2$ has the factorization
property, and commutes with orientation switch, vertical edge delete, 
edge unzip and connected sum.
Moreover, it has good behavior under changing the renormalization
scale $\mu$.

\end{theorem}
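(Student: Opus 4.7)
The plan is to prove each of the listed properties by bootstrapping from the corresponding statements for the ordinary Kontsevich integral of knots, with the renormalization handled locally near each critical point and vertex. First I would establish horizontal deformation invariance: decompose the graph into tangles sitting between consecutive critical levels and vertex levels, so that each piece is either a braid-like tangle (handled by Lemma \ref{braidhor}) or a ``thin'' neighborhood of a critical point/vertex on which the renormalization is performed. Since a horizontal deformation fixing the critical points and vertices leaves the thin neighborhoods intact (after choosing $\varepsilon$ small enough), the renormalized pieces are unchanged, and Lemma \ref{braidhor} handles the rest. Passing to the limit $\varepsilon\to 0$ gives the invariance of $Z_2$. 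Factorization and the commutativity with the connected sum operation then follow immediately from Fubini, exactly as in the knot case, because multiplication of tangles is compatible with the renormalization at the two common boundary levels.

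For rigid motions of the critical points and vertices I would adapt the proof of Proposition \ref{mvcrits} (moving critical points). The key observation is that a rigid motion of a vertex or critical point can be realized as the composition of a horizontal deformation (already handled) and a ``needle'' move that changes only a thin region of height at most $\varepsilon$; the renormalization replaces the would-be isolated chord by a chord diagram on the $\mu$-wide opening, whose total weight is bounded uniformly in $\varepsilon$, while the argument from Proposition \ref{mvcrits} now bounds the chords on the \emph{globally active} part of the needle by a constant times $\varepsilon$. Taking $\varepsilon\to 0$ afterwards kills these contributions. Commutation with orientation switch is inherited directly from the naive extension, since the factor $(-1)^{P_\downarrow}$ is insensitive to the renormalization. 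For vertical edge delete and vertical edge unzip, one picks a Morse embedding in which the edge in question is a straight vertical segment joining a $Y$-vertex to a $\lambda$-vertex. Then the arguments given for the naive integral go through verbatim on the globally active part; on the locally active (renormalizing) parts at the two vertices, one checks by inspection that delete kills all chords touching the edge and unzip doubles each such chord into the expected sum, because the opened strands lie in a horizontal scale-$\mu$ configuration where these identities are immediate.

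Finally, for the behavior under changing the renormalization scale $\mu\to\mu'$, I would compute the local difference at a single vertex (or critical point). The globally active portion of the integrand is unchanged; only the locally active part contributes, and it evaluates to a chord diagram supported entirely on the two renormalizing strands, of the form
\[
\exp\!\Bigl(\tfrac{1}{2\pi i}\log(\mu'/\mu)\,t_{12}\Bigr)
\]
by the standard ``straight strands'' computation (the $z_i{-}z_i'$ are essentially constant on the opening region, so the integral reduces to an iterated logarithm). Thus $Z_2^{\mu'}$ differs from $Z_2^{\mu}$ by multiplication by such an exponential factor at each vertex and critical point, which is the claimed good behavior. The main obstacle I expect is the rigid-motion invariance for vertices: in the knot case one moves a critical point by shrinking a needle, but here the renormalization itself contributes nontrivial terms localized near the vertex, and one must verify that the needle estimate of Proposition \ref{mvcrits} still dominates the new locally active contributions once the $\varepsilon\to 0$ limit is taken in the correct order.
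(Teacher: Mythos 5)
Your outline follows the paper's general strategy (decompose into tangles, reuse Lemma \ref{braidhor} and the estimates of Proposition \ref{mvcrits}, observe that the renormalization prescription at a vertex matches the one at a critical point for edge delete), but two of the claims are genuinely wrong as stated, and they are exactly where the new content of the theorem lies. First, rigid motions of \emph{vertices}: you treat them like the needle move and assert that the chords ending on the thin two-strand region are ``killed'' as $\varepsilon\to 0$. They are not. A long chord coming from a faraway strand and ending on one of the two close parallel edges below the vertex has a coefficient that does not tend to zero; what happens instead is that such chords come in pairs with the \emph{same} sign, the pair commutes with the short chords of the renormalization by the Locality Lemma \ref{loc}, and by the vertex invariance relation the sum of the pair equals a single chord on the third (upper) edge, whose coefficient in the other configuration agrees in the limit. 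So the mechanism is matching via VI, not cancellation, and without invoking VI (and Locality, to slide past the renormalization factor) the argument does not close. Relatedly, in the critical-point case the reason short chords between the needle sides cause no trouble is not the renormalization but the requirement that the sides be parallel untwisted lines, so $d(z_k-z_k')=0$; this is what replaces the discarded FI relation and is why only \emph{rigid} motions are allowed.

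Second, edge unzip: your ``one checks by inspection \dots these identities are immediate'' skips the actual difficulty. After unzipping, the two vertices are gone, so $Z_2(u_e\Gamma)$ contains \emph{no} renormalization at those heights, whereas $u_e(Z_2(\Gamma))$ carries the leftover renormalization factors from both former vertices. One must show these two factors annihilate each other: the paper computes each as an explicit exponential $\exp\bigl(\pm\tfrac{\ln(\mu/\varepsilon)\,t_{12}}{2\pi i}\bigr)$ via the rescaling-braid Lemma \ref{rescale2}, and uses the Locality Lemma \ref{loc} to commute the lower factor along the doubled edge (past the pairs of long chords) so that it meets and cancels the upper one. Without this step the claimed commutation with unzip is unproven. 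Your treatment of horizontal deformations, factorization/connected sum, orientation switch, and the scale change $\mu\to\mu'$ (modulo the garbled justification --- the point is that the ratio of strand distances rescales, giving the iterated integral of $d\ln(z-z')$ and hence $\exp\bigl(\tfrac{\ln(\mu'/\mu)t_{12}}{2\pi i}\bigr)$, not that $z_i-z_i'$ is constant) is in line with the paper.
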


By rigid motions of critical points we mean shrinking or extending a sharp needle,
like in the case of the standard Kontsevich integral (Lemma \ref{mvcrits}),
with the difference that
we do not allow twists on the needle, but require the two sides of the needle to
be parallel straight lines. This difference is due to dropping the
framing independence relation, as adding or eliminating twists would change the
framing.

For vertices, a rigid motion is moving the vertex
down two very close edges without twists, as shown in the figure:

\begin{center}
\begin{picture}(0,0)%
\includegraphics{vertexmotion.pstex}%
\end{picture}%
\setlength{\unitlength}{3947sp}%
\begingroup\makeatletter\ifx\SetFigFont\undefined%
\gdef\SetFigFont#1#2#3#4#5{%
  \reset@font\fontsize{#1}{#2pt}%
  \fontfamily{#3}\fontseries{#4}\fontshape{#5}%
  \selectfont}%
\fi\endgroup%
\begin{picture}(989,1374)(1339,-973)
\end{picture}%

\end{center}

To prove that the integral commutes with the vertical edge unzip operation and
to investigate the behavior under changing the scale $\mu$, we will use
the following lemma:

\begin{lemma} \label{rescale}
Let $w_1, w_2$ be distinct complex numbers and let $\beta$ be
another complex number. Let $B$ be the 2-strand ``rescaling braid'' defined
by the map
\begin{center}

$[\tau,T] \to [\tau,T]\times{\mathbb C}^2$

$t \mapsto (t, e^{\beta t}w_1,e^{\beta t}w_2).$
\end{center}

Then
$$Z_2(B)=exp \Big(\frac{\beta t_{12}(T-\tau)}{2 \pi i}\Big) \in {\mathcal A}(\uparrow_2),$$
where $t_{12}$ is the chord diagram with one
chord between the two vertical strands.
\end{lemma}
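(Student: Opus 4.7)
The plan is to compute $Z_2(B)$ directly from the integral formula. Since $B$ is a pure braid with no vertices and no critical points, the renormalization is trivial and $Z_2(B) = Z(B)$, so I just need to evaluate the standard Kontsevich integral on $B$.

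First I would observe that on a two-strand braid, at each height $t_i$ there is only one possible unordered pairing of distinct points: one endpoint on strand 1 and one on strand 2. Hence the inner sum over $P$ collapses, and the associated chord diagram $D_P$ is simply $t_{12}^m$ in the $m$-th summand (here $t_{12}^m$ denotes the diagram on $\uparrow_2$ with $m$ horizontal chords stacked between the two strands). Moreover, both strands are oriented upward, so every chord endpoint has $t$ increasing along the orientation and $P_\downarrow = 0$, killing the sign.

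Next I would compute the one-form $\frac{dz_i - dz_i'}{z_i - z_i'}$ for this specific parametrization. Writing $z_i(t_i) = e^{\beta t_i} w_1$ and $z_i'(t_i) = e^{\beta t_i} w_2$, one has
\begin{equation*}
z_i - z_i' = e^{\beta t_i}(w_1 - w_2), \qquad dz_i - dz_i' = \beta e^{\beta t_i}(w_1 - w_2)\, dt_i,
\end{equation*}
so that $\frac{dz_i - dz_i'}{z_i - z_i'} = \beta\, dt_i$. The dependence on the complex data $w_1, w_2$ drops out entirely, which is the key mechanism behind the exponential answer.

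Putting this together, the $m$-th term of $Z(B)$ becomes
\begin{equation*}
\frac{t_{12}^m\,\beta^m}{(2\pi i)^m} \int_{\tau < t_1 < \cdots < t_m < T} dt_1 \wedge \cdots \wedge dt_m
= \frac{t_{12}^m}{m!}\left(\frac{\beta(T-\tau)}{2\pi i}\right)^{\!m},
\end{equation*}
using the standard simplex volume $\frac{(T-\tau)^m}{m!}$. Summing over $m \geq 0$ gives the claimed exponential. There is no real obstacle here; the only subtlety worth flagging is that because $t_{12}$ is a single element of the algebra ${\mathcal A}(\uparrow_2)$, the exponential is unambiguously defined in the graded completion, and the $m$-th term really does equal $\frac{1}{m!}\big(\frac{\beta(T-\tau)}{2\pi i}\big)^m t_{12}^m$ rather than some symmetrization or time-ordered product — this is the payoff from the one-form simplifying to the scalar $\beta\, dt_i$.
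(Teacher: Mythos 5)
Your proposal is correct and follows essentially the same route as the paper: the unique pairing forces $D_P = t_{12}^m$, the one-form collapses to $\beta\,dt_i$ since $z_i - z_i' = e^{\beta t_i}(w_1 - w_2)$, and the simplex volume $\frac{(T-\tau)^m}{m!}$ assembles the exponential. Your added remarks on the vanishing of $(-1)^{P_\downarrow}$ and the triviality of the renormalization for a braid without vertices or critical points are details the paper leaves implicit, but the computation is the same.
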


\begin{proof}
The $m$-th term of the sum in the defining formula of $Z$ is
$$\frac{1}{(2\pi i)^m} t_{12}^m \int_{\tau}^{T}\int_{t_1}^{T}...\int_{t_{m-1}}^{T}
d ln(e^{\beta t_m}w_1-e^{\beta t_m}w_2)...d ln(e^{\beta t_1}w_1-e^{\beta t_1}w_2)=$$
$$=\frac{1}{(2\pi i)^m} t_{12} \beta^m \int_{\tau}^{T}\int_{t_1}^{T}...\int_{t_{m-1}}^{T}
dt_m dt_{m-1}...dt_1
=\frac{(\beta t_{12} (T-\tau))^m}{(2\pi i)^m m!},$$
which proves the claim.

\end{proof}

We note that Lemma \ref{rescale} is easily extended to the case of the $n$-strand
rescaling braid, defined the same way, where in the statement $t_{12}$ would be
replaced by $\sum t_{ij}$.

We also state the following reformulation, that follows from Lemma \ref{rescale} by
elementary algebra:

\begin{lemma}\label{rescale2}
For the two strand rescaling braid $B$ where the bottom distance between the
strands is $l$, and the top distance is $L$, such as this one: 
\raisebox{-0.1 in}{\input twobraid.pstex_t} ,
$$Z_2(B)= exp \Big( \frac{ln(L/l) t_{12}}{2 \pi i}\Big),$$
independently of $T$ and $\tau$.
\end{lemma}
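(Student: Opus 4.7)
The plan is to match the setup of Lemma \ref{rescale2} to the parameterized braid of Lemma \ref{rescale}, and then solve for $\beta$ in terms of the geometric data $l$, $L$, $\tau$, $T$.

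First, I would observe that the rescaling braid appearing in Lemma \ref{rescale2} is exactly of the form treated in Lemma \ref{rescale}: the two strands are parameterized by $t \mapsto (t, e^{\beta t}w_1)$ and $t \mapsto (t, e^{\beta t}w_2)$ for some real $\beta$ and suitable choice of $w_1, w_2 \in \mathbb{C}$ (real $\beta$ corresponds to pure rescaling with no rotation, which is what the picture \raisebox{-0.1 in}{\input twobraid.pstex_t} depicts). The horizontal distance between the strands at height $t$ is then $e^{\beta t}|w_1 - w_2|$.

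Next, the boundary conditions translate directly: the bottom width at $t = \tau$ gives $l = e^{\beta \tau}|w_1 - w_2|$, and the top width at $t = T$ gives $L = e^{\beta T}|w_1 - w_2|$. Taking the ratio eliminates $|w_1 - w_2|$ entirely, yielding $L/l = e^{\beta(T-\tau)}$, so
$$\beta(T - \tau) = \ln(L/l).$$
Substituting this identity into the formula supplied by Lemma \ref{rescale},
$$Z_2(B) = \exp\!\Bigl(\tfrac{\beta t_{12}(T-\tau)}{2\pi i}\Bigr) = \exp\!\Bigl(\tfrac{\ln(L/l)\, t_{12}}{2\pi i}\Bigr),$$
which is the desired expression.

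The independence from $T$ and $\tau$ individually is built into this derivation: the parameter $\beta$ rescales to absorb any change in $T - \tau$ while preserving the ratio $L/l$, and only $\ln(L/l)$ survives in the final exponent. There is no real obstacle here; the entire content is the algebraic reparameterization $\beta = \ln(L/l)/(T-\tau)$, which is precisely the "elementary algebra" the statement refers to. The only small point to check is that one is free to take $\beta$ real, which is justified because the braid shown is a pure rescaling with no twisting — any nonzero imaginary part of $\beta$ would introduce rotation and change the isotopy class of the braid.
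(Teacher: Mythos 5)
Your proposal is correct and is exactly the argument the paper has in mind: Lemma \ref{rescale2} is stated as following from Lemma \ref{rescale} ``by elementary algebra,'' and that algebra is precisely your identification $l=e^{\beta\tau}|w_1-w_2|$, $L=e^{\beta T}|w_1-w_2|$, hence $\beta(T-\tau)=\ln(L/l)$, substituted into the formula of Lemma \ref{rescale}. Your closing remark that $\beta$ may be taken real (no twisting) for the pictured braid is a reasonable reading of the intended setup and does not change the derivation.
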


Now we proceed to prove Theorem \ref{goodprop}:

\begin{proof}

{\bf Factorization property.}

The factorization property for tangles is
untouched by the renormalization, as the height at which tangles are
glued together must be non-critical and not contain any vertices.

For the vertical connected sum of knotted graphs $\gamma_1$ and $\gamma_2$,
denoted $\gamma_1 \# \gamma_2$,
if we connect the maximum point of the $\gamma_1$ with the minimum of
$\gamma_2$, the minimum and maximum renormalizations will become vertex
renormalizations when computing
the Kontsevich integral of $\gamma_1 \# \gamma_2$.

{\bf Invariance.}

To prove invariance under horizontal deformations that leave the critical points
and vertices fixed, we use the same proof as in the case of the standard
integral (Proposition \ref{hor}), i.e. cut the graph into tangles with no critical points or vertices,
and thin tangles containing the vertices and critical points, apply Lemma \ref{braidhor}
to the former kind, then take a limit.

\smallskip

For invariance under rigid motions of critical points, it is enough to 
consider the case of a maximum, the case of a minimum being strictly similar. 
Since we have proven
the invariance under horizontal deformations and the needle is not twisted,
we can assume that the sides of the needle are two parallel lines and $\varepsilon$
is the horizontal distance between them.

By the factorization property, the value of $Z_2$ for the needle extended 
(see figure) can
be written as a product of the values for the part under the needle, the two
parallel strands, and the renormalization for the critical point that is the
tip of the needle:

\begin{center}
\begin{picture}(0,0)%
\includegraphics{needle2.pstex}%
\end{picture}%
\setlength{\unitlength}{2131sp}%
\begingroup\makeatletter\ifx\SetFigFont\undefined%
\gdef\SetFigFont#1#2#3#4#5{%
  \reset@font\fontsize{#1}{#2pt}%
  \fontfamily{#3}\fontseries{#4}\fontshape{#5}%
  \selectfont}%
\fi\endgroup%
\begin{picture}(4529,2217)(124,-196)
\end{picture}%

\end{center}

The value of $Z_2$ for the needle retracted is the product of the value for
the part under the needle and the renormalization part. What we have to show therefore
is that in the first case (needle extended) the coefficients for any diagram
that contains any chords on the parallel strands tends to zero as
the width of the needle tends to zero.

This is indeed the case: the integral is $0$ for any diagram on two
parallel strands that contains any short chord, since $d(z_k-z_k')=0$.
For long chords, the highest long chord can be paired up with the one
ending on the other strand, as in the proof of \ref{mvcrits}. The reason
for this is that their difference commutes with any short chords that
occur in the renormalization part, by the Locality Lemma \ref{loc}. 
Now we can use the same estimates as in Proposition \ref{mvcrits} to finish the proof.

\smallskip

To prove invariance under rigid motions of vertices, let us assume that all
edges are outgoing. All other cases are proven the same way after inserting
the appropriate sign changes. Similarly to the
case of critical points, we can assume that the part we shrink consists of two parallel
strands at horizontal distance $\varepsilon$. We need to prove that the
difference of the values of $Z_2$ for the two pictures shown below tends to
zero as $\varepsilon$ tends to zero.

\begin{center}
\begin{picture}(0,0)%
\includegraphics{rigmotion.pstex}%
\end{picture}%
\setlength{\unitlength}{2171sp}%
\begingroup\makeatletter\ifx\SetFigFont\undefined%
\gdef\SetFigFont#1#2#3#4#5{%
  \reset@font\fontsize{#1}{#2pt}%
  \fontfamily{#3}\fontseries{#4}\fontshape{#5}%
  \selectfont}%
\fi\endgroup%
\begin{picture}(3323,2154)(574,-1108)
\end{picture}%

\end{center}

For the value corresponding to the left picture, just like in the needle
case, we can assume that there are no short chords
connecting the two parallel strands. The long chords ending on the
parallel strands come in pairs, with the same sign, and their coefficients
are the same in the limit. These pairs commute with any short chords in the
renormalization part by Lemma \ref{loc}. Also, by the vertex invariance 
relation,
each sum of a pair of such chords equals one chord ending on the top vertical edge,
which, from the right side integral, will have the same coefficient as the
former sum, as $\varepsilon \to 0$. This concludes the proof.

\smallskip

{\bf Good behavior under orientation switch}

The renormalization doesn't change anything about the signs
that correspond to the orientations of the edges, so $Z_2$
still commutes with orientation switches.

{\bf Good behavior under vertical edge delete.}

When deleting a vertical edge, the renormalization that was
originally inserted for the two vertices at either end of the
edge becomes exactly the
renormalization we need for the two critical points that replace the
vertices, as on the figure.

\begin{center}
\begin{picture}(0,0)%
\includegraphics{delete.pstex}%
\end{picture}%
\setlength{\unitlength}{710sp}%
\begingroup\makeatletter\ifx\SetFigFont\undefined%
\gdef\SetFigFont#1#2#3#4#5{%
  \reset@font\fontsize{#1}{#2pt}%
  \fontfamily{#3}\fontseries{#4}\fontshape{#5}%
  \selectfont}%
\fi\endgroup%
\begin{picture}(6376,6440)(726,-6026)
\end{picture}%

\end{center}

{\bf Good behavior under vertical edge unzip.}

It is slightly harder to see that $Z_2$ commutes with unzipping
an edge. If we unzip a vertical edge $e$ and then compute the
value of $Z_2$, the vertices on either end of $e$ disappear,
so no renormalization will occur.

However, if we first compute the integral and then perform unzip on the
result in ${\mathcal A}(\Gamma)$, then the coefficients for each resulting chord
diagram will be as if we had computed them using re-normalizations as in
the picture below.

\begin{center}
\begin{picture}(0,0)%
\includegraphics{renormunzip.pstex}%
\end{picture}%
\setlength{\unitlength}{1066sp}%
\begingroup\makeatletter\ifx\SetFigFont\undefined%
\gdef\SetFigFont#1#2#3#4#5{%
  \reset@font\fontsize{#1}{#2pt}%
  \fontfamily{#3}\fontseries{#4}\fontshape{#5}%
  \selectfont}%
\fi\endgroup%
\begin{picture}(2562,4845)(940,-4359)
\end{picture}%

\end{center}

What we need to show is that the contribution from the ``upper''
renormalization will cancel the contribution from the ``lower''.

Let $T$ denote the trivalent tangle from the lower
renormalization to the upper renormalization (including any ``faraway
part'' that is not on the picture).
Let us divide $T$ into three tangles:
let $T_1$ denote the lower renormalization area (short chords only),
$T_2$ the unzipped edge and any faraway part of the graph at
this height, and $T_3$ the upper renormalization area.

By the factorization property, $Z(T)=Z(T_1)Z(T_2)Z(T_3)$

In the integral's result, the short chords occurring at either renormalization
part can slide up and down the unzipped edge by Lemma \ref{loc}, as they
commute with the pairs of incoming long chords.

In other words, $Z(T_1)$ commutes with $Z(T_2)$, and therefore
$Z(T)=Z(T_2)Z(T_1)Z(T_3)$.

$Z(T_1)Z(T_3)=exp \Big( \frac {ln(\mu / \varepsilon) t_{12}} {2\pi i} \Big)
exp \Big( \frac {ln(\varepsilon/\mu) t_{12}}{2\pi i}\Big)=
1 \in {\mathcal A}(T)$ by the reformulated rescaling Lemma \ref{rescale2}, so the
re-normalizations cancel each other.

{\bf Good behavior under changing the scale $\mu$.}

Changing the scale $\mu$ to some other $\mu'$ amounts to adding a small, 
two-strand ``rescaling braid'' at each vertex and critical point:

\begin{center}
\input rescale.pstex_t
\end{center}

By Lemma \ref{rescale2}, this means that when changing the
scale from $\mu$ to $\mu'$, the element
$exp \Big( \frac {ln(\mu'/\mu) t_{12}}{2\pi i} \Big) \in \mathcal{A}(\uparrow _2)$
is placed at each $\lambda$-shaped vertex and maximum point, and
$exp \Big( \frac {ln(\mu/\mu') t_{12}}{2\pi i} \Big) = exp \Big(- \frac {ln(\mu'/\mu) t_{12}}{2\pi i} \Big) 
\in \mathcal{A}(\uparrow _2)$ is placed at each $Y$-vertex and minimum.

\end{proof}

\section{Corrections - constructing a knotted graph invariant}\label{corrections}

\subsection{Missing moves}
As in the case of knots, the Kontsevich integral is not invariant under certain
deformations that do not change the isotopy class of the framed graph. In the case
of knots, the only such deformation was ``straightening a hump'', and we fixed this
by multiplying with $Z(\raisebox{-2 mm}{})^{-c/2}$.

The situation now is more complex than in the case of knots. $Z_2$ is invariant under deformations
that do not change the number of critical points or the shape of a vertex.
To transform it into a knotted framed graph invariant $Z_3$, we need to
make corrections to create invariance under the following eight moves:
\begin{center}
\input moves.pstex_t

\end{center}

Moves $1$ and $2$ will fix the problem of straightening humps, just like
in the case of knots.

Moves $3$, $4$, $5$ and $6$ guarantee that we can switch a vertex freely
from a $\lambda$ shape to a $Y$ shape and vice versa.

Moves $7$ and $8$ are needed because we excluded vertices that are critical
points at the same time. Any such vertex can be perturbed into a $\lambda$
or a $Y$ shape by an arbitrarily small deformation, but we need the value
of the invariant to be the same whether we push the middle edge over to
the left or to the right.

\smallskip

To implement these corrections, we have four ``correctors'' available: we can
put one each on cups, caps, $Y$-vertices
and $\lambda$-vertices. The ones on cups and caps are elements of $\mathcal{A}(\uparrow _1)$,
the ones on vertices we can think of as elements of $\mathcal{A}(\uparrow _2)$,
as one of the edges can be swept free of chords using the vertex invariance relation.

\smallskip

The eight moves above define eight equations ($1$ and $2$ are equations in $\mathcal{A}_1$,
the rest of them in $\mathcal{A}_2$, the unknowns being the four correctors). The
question is whether this system of equations can be solved.

\subsection{Syzygies}
To reduce the number of equations that the correctors need to satisfy, we
use the syzygies below:

\begin{center}
\input syzygies.pstex_t
\end{center}

The first syzygy implies that once $Z_3$ is invariant under move $1$, it is
automatically invariant under move $2$ (we used this fact already in the knot case).

The second says that invariance under moves $1$ and $3$ implies invariance under
move $5$.

By the third, invariance under moves $2$ and $4$ implies invariance under $6$.

The fourth tells us that fixing move $1$, $3$ and $4$ fixes $7$.

Finally, the fifth syzygy shows that fixing moves $2$, $5$ and $6$ fixes
move $8$ as well.

Therefore, it's enough to make $Z$ invariant under moves $1$, $3$ and $4$, which
together imply invariance under everything else.

\smallskip

\subsection{Translating to equations}
We know already how to make $Z_2$ invariant under move $1$, as this was done in
the knot case.
We need to place a correction $u \in {\mathcal A}_1$ at each minimum, and
a correction $n \in {\mathcal A}_1$ at each maximum, the only equation $u$ and $n$ 
need to satisfy being 
$un = Z_2(\raisebox{-1 mm}{})^{-1} \in \mathcal{A}(\uparrow _1)$. 
This will settle move $1$ the same way
it did for knots. (The correction used in the knot case is just an 
instance of the general one described here. The tool used for knots was the lemma
of a distinguished tangle, Lemma \ref{nointeraction}, which said that faraway
parts of a knot don't interact. The proof of this applies in the graph 
context word by word.)

We will denote $Z_2(\raisebox{-1.5 mm}{})$ by $\nu$.

\smallskip

Now let us translate moves $3$ and $4$ to equations on
correctors $Y$ and $\lambda$.

For simplicity, let us assume that in the picture for moves
$3$ and $4$, the width of the opening at the top is 1. (This can be 
achieved by horizontal deformations when applying these moves on
any graph.) Let us first also assume that the fixed scale $\mu$ is
also one. (We can correct this later as we know exactly how changing 
$\mu$ effects the value of $Z_2$). 

Finally, we choose one
convenient set of orientations for the edges - all other cases will
follow from this one by performing the orientation change operation
on the appropriate edges.

To compute the values of $Z_2$ on the left side of move $3$ and the
right side of move $4$, we will make use of the following lemma:

\begin{lemma}\label{mirror}
Assuming that the width at the opening at the top of each of the pictures
below is $1$, and $\mu =1$,
$$Z_2 \Big( \raisebox{-0.15 in}{\begin{picture}(0,0)%
\includegraphics{leftieb.pstex}%
\end{picture}%
\setlength{\unitlength}{1302sp}%
\begingroup\makeatletter\ifx\SetFigFont\undefined%
\gdef\SetFigFont#1#2#3#4#5{%
  \reset@font\fontsize{#1}{#2pt}%
  \fontfamily{#3}\fontseries{#4}\fontshape{#5}%
  \selectfont}%
\fi\endgroup%
\begin{picture}(1149,1586)(2314,-3135)
\end{picture}%
} \Big)=
\raisebox{-0.15 in}{}, \thickspace
Z_2 \Big( \raisebox{-0.15 in}{\begin{picture}(0,0)%
\includegraphics{rightieb.pstex}%
\end{picture}%
\setlength{\unitlength}{1263sp}%
\begingroup\makeatletter\ifx\SetFigFont\undefined%
\gdef\SetFigFont#1#2#3#4#5{%
  \reset@font\fontsize{#1}{#2pt}%
  \fontfamily{#3}\fontseries{#4}\fontshape{#5}%
  \selectfont}%
\fi\endgroup%
\begin{picture}(1150,1614)(964,-3163)
\end{picture}%
} \Big)= 
\raisebox{-0.15 in}{} ,$$
i.e. the value of $Z_2$ on these graphs has no chords at all.
\end{lemma}

\begin{proof}
There are two types of chords appearing in 
$Z_2 \Big( \raisebox{-0.15 in}{} \Big)$:
\begin{itemize}
\item Horizontal chords connecting the left vertical strand to the diagonal edge
\item Horizontal chords connecting the diagonal edge to the right vertical strand
\end{itemize}

Note that there are no chords on the bottom part: the opening of the strands is
of width $1$, and so is the renormalization scale for the minimum, 
so when computing $Z_2$, we get
$exp \Big( \frac{ln(1) t_{12}}{2 \pi i}\Big)$ as we showed in 
Lemma \ref{rescale2}, which equals $1 \in {\mathcal A}_2$ 
(since $ln(1)=0$), meaning no chords at all.

Also, there are no chords connecting the left and right vertical strands, as these are
parallel, so in the definition of $Z$, $d(z_i-z_i')=0$.

\smallskip

The key observation we will use is that the two types of chords commute.
This is an elementary computation making repeated use of the Locality Lemma \ref{loc}
and the VI relation. What we will prove is that any chord diagram
with chords as above is equivalent to one where all the horizontal chords on the right are 
on the top, followed by all the horizontal chords on the left at the bottom:
\begin{center}
$\begin{picture}(0,0)%
\includegraphics{pic1b.pstex}%
\end{picture}%
\setlength{\unitlength}{1658sp}%
\begingroup\makeatletter\ifx\SetFigFont\undefined%
\gdef\SetFigFont#1#2#3#4#5{%
  \reset@font\fontsize{#1}{#2pt}%
  \fontfamily{#3}\fontseries{#4}\fontshape{#5}%
  \selectfont}%
\fi\endgroup%
\begin{picture}(1374,1899)(2314,-3448)
\end{picture}%
 \thickspace \raisebox{0.35 in}{=} 
\thickspace \begin{picture}(0,0)%
\includegraphics{pic2b.pstex}%
\end{picture}%
\setlength{\unitlength}{1658sp}%
\begingroup\makeatletter\ifx\SetFigFont\undefined%
\gdef\SetFigFont#1#2#3#4#5{%
  \reset@font\fontsize{#1}{#2pt}%
  \fontfamily{#3}\fontseries{#4}\fontshape{#5}%
  \selectfont}%
\fi\endgroup%
\begin{picture}(1374,1899)(2314,-3448)
\end{picture}%
 $
\end{center}  
To prove this, let us first establish a few basic equalities.

For the bottom chord on the right, we have the following:

\begin{center}
$\begin{picture}(0,0)%
\includegraphics{pic3b.pstex}%
\end{picture}%
\setlength{\unitlength}{1579sp}%
\begingroup\makeatletter\ifx\SetFigFont\undefined%
\gdef\SetFigFont#1#2#3#4#5{%
  \reset@font\fontsize{#1}{#2pt}%
  \fontfamily{#3}\fontseries{#4}\fontshape{#5}%
  \selectfont}%
\fi\endgroup%
\begin{picture}(1374,1899)(2314,-3448)
\end{picture}%
 \thickspace \raisebox{0.35 in}{=}
\thickspace \begin{picture}(0,0)%
\includegraphics{pic4b.pstex}%
\end{picture}%
\setlength{\unitlength}{1697sp}%
\begingroup\makeatletter\ifx\SetFigFont\undefined%
\gdef\SetFigFont#1#2#3#4#5{%
  \reset@font\fontsize{#1}{#2pt}%
  \fontfamily{#3}\fontseries{#4}\fontshape{#5}%
  \selectfont}%
\fi\endgroup%
\begin{picture}(1374,1899)(2314,-3448)
\end{picture}%

\medspace \raisebox{0.35 in}{+} \medspace \begin{picture}(0,0)%
\includegraphics{pic5b.pstex}%
\end{picture}%
\setlength{\unitlength}{1658sp}%
\begingroup\makeatletter\ifx\SetFigFont\undefined%
\gdef\SetFigFont#1#2#3#4#5{%
  \reset@font\fontsize{#1}{#2pt}%
  \fontfamily{#3}\fontseries{#4}\fontshape{#5}%
  \selectfont}%
\fi\endgroup%
\begin{picture}(1374,1899)(2314,-3448)
\end{picture}%
  \thickspace 
\thickspace \thickspace
\raisebox{0.35 in}{(1)}$
\end{center}
This is true since we can slide the right end of the chord down, then
use the VI relation.

We will use the following shorthand notation:

\begin{center}
$\begin{picture}(0,0)%
\includegraphics{short1.pstex}%
\end{picture}%
\setlength{\unitlength}{1776sp}%
\begingroup\makeatletter\ifx\SetFigFont\undefined%
\gdef\SetFigFont#1#2#3#4#5{%
  \reset@font\fontsize{#1}{#2pt}%
  \fontfamily{#3}\fontseries{#4}\fontshape{#5}%
  \selectfont}%
\fi\endgroup%
\begin{picture}(1524,924)(889,-973)
\end{picture}%
  \medspace
\raisebox{0.15 in}{+} \thickspace  \begin{picture}(0,0)%
\includegraphics{short2.pstex}%
\end{picture}%
\setlength{\unitlength}{1776sp}%
\begingroup\makeatletter\ifx\SetFigFont\undefined%
\gdef\SetFigFont#1#2#3#4#5{%
  \reset@font\fontsize{#1}{#2pt}%
  \fontfamily{#3}\fontseries{#4}\fontshape{#5}%
  \selectfont}%
\fi\endgroup%
\begin{picture}(1524,924)(889,-973)
\end{picture}%
 
\thickspace \raisebox{0.15 in}{=} \thickspace  \begin{picture}(0,0)%
\includegraphics{short3.pstex}%
\end{picture}%
\setlength{\unitlength}{1776sp}%
\begingroup\makeatletter\ifx\SetFigFont\undefined%
\gdef\SetFigFont#1#2#3#4#5{%
  \reset@font\fontsize{#1}{#2pt}%
  \fontfamily{#3}\fontseries{#4}\fontshape{#5}%
  \selectfont}%
\fi\endgroup%
\begin{picture}(1599,924)(889,-973)
\end{picture}%
  
\raisebox{0.15 in}{,} $
\end{center}
so the sum above will be denoted as \raisebox{-0.3in}{\begin{picture}(0,0)%
\includegraphics{short4b.pstex}%
\end{picture}%
\setlength{\unitlength}{1934sp}%
\begingroup\makeatletter\ifx\SetFigFont\undefined%
\gdef\SetFigFont#1#2#3#4#5{%
  \reset@font\fontsize{#1}{#2pt}%
  \fontfamily{#3}\fontseries{#4}\fontshape{#5}%
  \selectfont}%
\fi\endgroup%
\begin{picture}(1168,1614)(2314,-3163)
\end{picture}%
} .
We will refer to the fixed end of the chords as the {\it root}.

Next, since a chord on the left commutes with both chords in this sum
(obviously with the horizontal one), and by the Locality Lemma \ref{loc} with
the short one, it commutes with the whole sum:
\begin{center}
$\begin{picture}(0,0)%
\includegraphics{pic6b.pstex}%
\end{picture}%
\setlength{\unitlength}{1934sp}%
\begingroup\makeatletter\ifx\SetFigFont\undefined%
\gdef\SetFigFont#1#2#3#4#5{%
  \reset@font\fontsize{#1}{#2pt}%
  \fontfamily{#3}\fontseries{#4}\fontshape{#5}%
  \selectfont}%
\fi\endgroup%
\begin{picture}(1374,1899)(2314,-3448)
\end{picture}%
 \thickspace \raisebox{0.35 in}{=}
\thickspace \begin{picture}(0,0)%
\includegraphics{pic7b.pstex}%
\end{picture}%
\setlength{\unitlength}{1973sp}%
\begingroup\makeatletter\ifx\SetFigFont\undefined%
\gdef\SetFigFont#1#2#3#4#5{%
  \reset@font\fontsize{#1}{#2pt}%
  \fontfamily{#3}\fontseries{#4}\fontshape{#5}%
  \selectfont}%
\fi\endgroup%
\begin{picture}(1374,1899)(2314,-3448)
\end{picture}%
 \thickspace \thickspace
\thickspace \raisebox{0.35 in}{(2)}$
\end{center}

We can also pull only the box part of a sum over a left chord:
\begin{center}
$\begin{picture}(0,0)%
\includegraphics{pic65b.pstex}%
\end{picture}%
\setlength{\unitlength}{1934sp}%
\begingroup\makeatletter\ifx\SetFigFont\undefined%
\gdef\SetFigFont#1#2#3#4#5{%
  \reset@font\fontsize{#1}{#2pt}%
  \fontfamily{#3}\fontseries{#4}\fontshape{#5}%
  \selectfont}%
\fi\endgroup%
\begin{picture}(1374,1899)(2314,-3448)
\end{picture}%
 \thickspace \raisebox{0.35in}{=}
\thickspace  \thickspace \thickspace
\thickspace \raisebox{0.35 in}{(3)} $
\end{center}
This, when expanded, is just a 4T relation (two terms on each side).

And lastly we claim that we can separate nested sums:
\begin{center}
$\begin{picture}(0,0)%
\includegraphics{pic8b.pstex}%
\end{picture}%
\setlength{\unitlength}{1934sp}%
\begingroup\makeatletter\ifx\SetFigFont\undefined%
\gdef\SetFigFont#1#2#3#4#5{%
  \reset@font\fontsize{#1}{#2pt}%
  \fontfamily{#3}\fontseries{#4}\fontshape{#5}%
  \selectfont}%
\fi\endgroup%
\begin{picture}(1374,1899)(2314,-3448)
\end{picture}%
 \thickspace \raisebox{0.35 in}{=} \thickspace 
\begin{picture}(0,0)%
\includegraphics{pic9b.pstex}%
\end{picture}%
\setlength{\unitlength}{1934sp}%
\begingroup\makeatletter\ifx\SetFigFont\undefined%
\gdef\SetFigFont#1#2#3#4#5{%
  \reset@font\fontsize{#1}{#2pt}%
  \fontfamily{#3}\fontseries{#4}\fontshape{#5}%
  \selectfont}%
\fi\endgroup%
\begin{picture}(1374,1899)(2314,-3448)
\end{picture}%
 \thickspace \thickspace
\thickspace \raisebox{0.35 in}{(4)} \raisebox{0.35 in}{,} $
\end{center}
as we can slide the lower box over the sum in the middle, since it commutes with both parts
by the equation above and the Locality Lemma \ref{loc}.

Therefore, we can pull all the chords on the right to the top like we wanted to, by the following
algorithm:

First, we pull all the right ends of the chords over the right vertex, 
using $(1)$, creating a number of boxes. Then we pull the box end
of what was the lowest chord (which has the highest box now) up to its root, over left horizontal 
chords if needed, which we can do using $(3)$.
Continue by pulling the next box up to its root, over horizontal
left chords and the entire lowest sum, both of which are legal steps from above 
($(3)$ and $(4)$).
Continue doing this until all the boxes are united with their roots. 
Then we can slide all the sums to the top as they commute with the 
horizontal left chords, as shown
in $(2)$. Now we reproduce the above procedure backwards, to turn the 
boxes back into right chords, except that, now, they are sitting 
above the left chords. This concludes the proof of the
observation.

Now it's easy to show that all these chords indeed cancel out in 
$Z_2 \Big( \raisebox{-0.15 in}{} \Big)$ : 
since the left ones all commute with the right ones, the value 
doesn't change if we compute $Z_2$ separately on the left and on
the right. As the opening of strands is 1, as well as the renormalization scale,
we have $exp \Big( \frac{ln(1) t_{12}}{2 \Pi i} \Big) =1$ on both sides, 
by Lemma \ref{rescale2}.
This concludes the proof. (The proof for the mirror image is the same.)

\end{proof}

An easy corollary is the following:

\begin{lemma}\label{a}
Again assuming that the width at the opening at the top of each of the pictures
below is $1$, and $\mu =1$, the following is true:
$$Z_2 \Big( \raisebox{-0.25 in}{\begin{picture}(0,0)%
\includegraphics{leftie.pstex}%
\end{picture}%
\setlength{\unitlength}{1263sp}%
\begingroup\makeatletter\ifx\SetFigFont\undefined%
\gdef\SetFigFont#1#2#3#4#5{%
  \reset@font\fontsize{#1}{#2pt}%
  \fontfamily{#3}\fontseries{#4}\fontshape{#5}%
  \selectfont}%
\fi\endgroup%
\begin{picture}(1374,2109)(2314,-3658)
\end{picture}%
} \Big)=
\thickspace \raisebox{-0.25 in}{\begin{picture}(0,0)%
\includegraphics{answer.pstex}%
\end{picture}%
\setlength{\unitlength}{1302sp}%
\begingroup\makeatletter\ifx\SetFigFont\undefined%
\gdef\SetFigFont#1#2#3#4#5{%
  \reset@font\fontsize{#1}{#2pt}%
  \fontfamily{#3}\fontseries{#4}\fontshape{#5}%
  \selectfont}%
\fi\endgroup%
\begin{picture}(1224,2052)(7189,-3601)
\put(7726,-3511){\makebox(0,0)[lb]{\smash{{\SetFigFont{8}{9.6}{\rmdefault}{\mddefault}{\updefault}{\color[rgb]{0,0,0}$a$}%
}}}}
\end{picture}%
} \thickspace=
Z_2 \Big( \raisebox{-0.25 in}{\begin{picture}(0,0)%
\includegraphics{rightie.pstex}%
\end{picture}%
\setlength{\unitlength}{1263sp}%
\begingroup\makeatletter\ifx\SetFigFont\undefined%
\gdef\SetFigFont#1#2#3#4#5{%
  \reset@font\fontsize{#1}{#2pt}%
  \fontfamily{#3}\fontseries{#4}\fontshape{#5}%
  \selectfont}%
\fi\endgroup%
\begin{picture}(1374,2109)(964,-3658)
\end{picture}%
} \Big) ,$$

where $a=Z_2(\raisebox{-0.1 in}{}) \in {\mathcal A}(\uparrow_2).$
\end{lemma}

Let us say a word about our slight abuse of notation in stating this lemma:

In the statement, we mean that the edges of the answer are oriented
according to the orientations of the edges of the picture we're computing
$Z_2$ of, so the leftmost and rightmost sides are not really equal.

For the definition of $a \in {\mathcal A}_2$, due to the symmetry of
the picture, it doesn't matter which way the two parallel edges go, as
long as they are oriented the same way. In other words, $S_1 S_2 (a)=a$,
where $S_1$ and $S_2$ are the orientation reversing maps. Note also, that by
definition of $Z_2$, $a$ is an invertible element of ${\mathcal A}_2$.

\begin{proof}
First note that
$$Z_2(\raisebox{-0.12 in}{\begin{picture}(0,0)%
\includegraphics{oldadef.pstex}%
\end{picture}%
\setlength{\unitlength}{1224sp}%
\begingroup\makeatletter\ifx\SetFigFont\undefined%
\gdef\SetFigFont#1#2#3#4#5{%
  \reset@font\fontsize{#1}{#2pt}%
  \fontfamily{#3}\fontseries{#4}\fontshape{#5}%
  \selectfont}%
\fi\endgroup%
\begin{picture}(1224,999)(1189,-1348)
\end{picture}%
}) = 
Z_2( \begin{picture}(0,0)%
\includegraphics{2verts.pstex}%
\end{picture}%
\setlength{\unitlength}{1224sp}%
\begingroup\makeatletter\ifx\SetFigFont\undefined%
\gdef\SetFigFont#1#2#3#4#5{%
  \reset@font\fontsize{#1}{#2pt}%
  \fontfamily{#3}\fontseries{#4}\fontshape{#5}%
  \selectfont}%
\fi\endgroup%
\begin{picture}(1524,624)(1039,-973)
\end{picture}%
 )
Z_2(\raisebox{-0.1 in}{}),$$
\noindent
since $\mu=1$ and by Lemma \ref{nointeraction}. The first factor on the right
side is $1$, since $\mu=1$.

Now the proof is essentially identical to the previous one. For the first step
of the key observation, we need to slide the right end of the lowest right 
chord over the bottom part of the picture. This is done by using one more
VI relation. The resulting sum of two chords commutes with $a$ by the Locality
Lemma \ref{loc}. Thus we can pull it over $a$ and use the VI relation once more
to get a single chord again.

A few edge orientations differ from the ones in Lemma \ref{mirror}. (We chose orientations 
there to avoid negative signs, here we're switching to the ones we will use later.) 
We can repeat the proof of Lemma \ref{mirror} with the same edge orientations, then switch
the ones we need to switch at the end. This causes no problems, as $Z_2$ commutes with 
the orientation switch operation. Since no chords end on the edges we're reorienting, 
the result doesn't change sign.

\end{proof}

A similar lemma, which is a crucial ingredient in the Murakami-Ohtsuki construction, 
\cite{murakami}, is proved by Le, Murakami, Murakami and Ohtsuki in
\cite{lmmo}; the proof involves the computation of the 
Z-value of an associator in terms of values of the multiple Zeta function.
Note that the element we call $a$ is called $b$ in \cite{lmmo}, and vice versa.

We can now use Lemma \ref{a} to easily compute the value of $Z_2$ on the
trivalent tangles that appear in moves $3$ and $4$. This is done in the following
two corollaries:

\begin{corollary} \label{compute} Still assuming that $\mu=1$,
$Z_2 \Big( \medspace \raisebox{-0.15 in}{\begin{picture}(0,0)%
\includegraphics{leftear.pstex}%
\end{picture}%
\setlength{\unitlength}{868sp}%
\begingroup\makeatletter\ifx\SetFigFont\undefined%
\gdef\SetFigFont#1#2#3#4#5{%
  \reset@font\fontsize{#1}{#2pt}%
  \fontfamily{#3}\fontseries{#4}\fontshape{#5}%
  \selectfont}%
\fi\endgroup%
\begin{picture}(1719,2814)(1009,-2398)
\end{picture}%
} \medspace \Big)
= \thickspace \raisebox{-0.15 in}{\begin{picture}(0,0)%
\includegraphics{leftans.pstex}%
\end{picture}%
\setlength{\unitlength}{868sp}%
\begingroup\makeatletter\ifx\SetFigFont\undefined%
\gdef\SetFigFont#1#2#3#4#5{%
  \reset@font\fontsize{#1}{#2pt}%
  \fontfamily{#3}\fontseries{#4}\fontshape{#5}%
  \selectfont}%
\fi\endgroup%
\begin{picture}(2078,3234)(2300,-2623)
\put(3331,-1996){\makebox(0,0)[rb]{\smash{{\SetFigFont{10}{12.0}{\rmdefault}{\mddefault}{\updefault}{\color[rgb]{0,0,0}$a$}%
}}}}
\end{picture}%
}$,
$Z_2 \Big( \medspace \raisebox{-0.15 in}{\begin{picture}(0,0)%
\includegraphics{rightear.pstex}%
\end{picture}%
\setlength{\unitlength}{868sp}%
\begingroup\makeatletter\ifx\SetFigFont\undefined%
\gdef\SetFigFont#1#2#3#4#5{%
  \reset@font\fontsize{#1}{#2pt}%
  \fontfamily{#3}\fontseries{#4}\fontshape{#5}%
  \selectfont}%
\fi\endgroup%
\begin{picture}(1719,3369)(2704,-2758)
\end{picture}%
} \medspace \Big)
= \thickspace \raisebox{-0.15 in}{\begin{picture}(0,0)%
\includegraphics{rightans.pstex}%
\end{picture}%
\setlength{\unitlength}{868sp}%
\begingroup\makeatletter\ifx\SetFigFont\undefined%
\gdef\SetFigFont#1#2#3#4#5{%
  \reset@font\fontsize{#1}{#2pt}%
  \fontfamily{#3}\fontseries{#4}\fontshape{#5}%
  \selectfont}%
\fi\endgroup%
\begin{picture}(2077,3234)(4354,-2623)
\put(5401,-1996){\rotatebox{360.0}{\makebox(0,0)[lb]{\smash{{\SetFigFont{10}{12.0}{\rmdefault}{\mddefault}{\updefault}{\color[rgb]{0,0,0}$a$}%
}}}}}
\end{picture}%
}$,
$Z_2 \Big( \medspace \raisebox{-0.15 in}{\begin{picture}(0,0)%
\includegraphics{pic17.pstex}%
\end{picture}%
\setlength{\unitlength}{1263sp}%
\begingroup\makeatletter\ifx\SetFigFont\undefined%
\gdef\SetFigFont#1#2#3#4#5{%
  \reset@font\fontsize{#1}{#2pt}%
  \fontfamily{#3}\fontseries{#4}\fontshape{#5}%
  \selectfont}%
\fi\endgroup%
\begin{picture}(1329,1254)(2314,-3088)
\end{picture}%
} \medspace \Big)
= \thickspace \raisebox{-0.15 in}{\begin{picture}(0,0)%
\includegraphics{pic16.pstex}%
\end{picture}%
\setlength{\unitlength}{1263sp}%
\begingroup\makeatletter\ifx\SetFigFont\undefined%
\gdef\SetFigFont#1#2#3#4#5{%
  \reset@font\fontsize{#1}{#2pt}%
  \fontfamily{#3}\fontseries{#4}\fontshape{#5}%
  \selectfont}%
\fi\endgroup%
\begin{picture}(1359,1239)(2239,-3088)
\put(2341,-2836){\makebox(0,0)[lb]{\smash{{\SetFigFont{12}{14.4}{\rmdefault}{\mddefault}{\updefault}{\color[rgb]{0,0,0}$a$}%
}}}}
\end{picture}%
}$, and
$Z_2 \Big( \medspace \raisebox{-0.15 in}{\begin{picture}(0,0)%
\includegraphics{pic18.pstex}%
\end{picture}%
\setlength{\unitlength}{1263sp}%
\begingroup\makeatletter\ifx\SetFigFont\undefined%
\gdef\SetFigFont#1#2#3#4#5{%
  \reset@font\fontsize{#1}{#2pt}%
  \fontfamily{#3}\fontseries{#4}\fontshape{#5}%
  \selectfont}%
\fi\endgroup%
\begin{picture}(1329,1254)(3709,-3088)
\end{picture}%
} \medspace \Big)
= \thickspace \raisebox{-0.15 in}{\begin{picture}(0,0)%
\includegraphics{pic19.pstex}%
\end{picture}%
\setlength{\unitlength}{1263sp}%
\begingroup\makeatletter\ifx\SetFigFont\undefined%
\gdef\SetFigFont#1#2#3#4#5{%
  \reset@font\fontsize{#1}{#2pt}%
  \fontfamily{#3}\fontseries{#4}\fontshape{#5}%
  \selectfont}%
\fi\endgroup%
\begin{picture}(1359,1239)(3574,-3088)
\put(4831,-2836){\rotatebox{360.0}{\makebox(0,0)[rb]{\smash{{\SetFigFont{12}{14.4}{\rmdefault}{\mddefault}{\updefault}{\color[rgb]{0,0,0}$a$}%
}}}}}
\end{picture}%
}$ .
\end{corollary}

Note that we are abusing notation again: the way we defined $a$, it was an element 
of ${\mathcal A}(\uparrow_2)$ where the strands were horizontal. Due to the symmetry of that
picture though, we can slide $a$ either up the curve on the left, or up the curve
on the right to a vertical position, we will get the same result. This is what we
call $a$ here.

\begin{proof}
Unzipping the vertical edge on the right in \raisebox{-0.25 in}{},
we get \raisebox{-0.25 in}{\begin{picture}(0,0)%
\includegraphics{pic11.pstex}%
\end{picture}%
\setlength{\unitlength}{1263sp}%
\begingroup\makeatletter\ifx\SetFigFont\undefined%
\gdef\SetFigFont#1#2#3#4#5{%
  \reset@font\fontsize{#1}{#2pt}%
  \fontfamily{#3}\fontseries{#4}\fontshape{#5}%
  \selectfont}%
\fi\endgroup%
\begin{picture}(1374,2109)(2314,-3658)
\end{picture}%
} .

Since $Z_2$ commutes with vertical unzip, and in 
$Z_2 \Big( \raisebox{-0.25 in}{} \Big)$
no chords end on the edge we're unzipping, we can deduce that
$$Z_2 \Big( \raisebox{-0.25 in}{} \Big) \thickspace
= \thickspace \raisebox{-0.25 in}{\begin{picture}(0,0)%
\includegraphics{pic12.pstex}%
\end{picture}%
\setlength{\unitlength}{1263sp}%
\begingroup\makeatletter\ifx\SetFigFont\undefined%
\gdef\SetFigFont#1#2#3#4#5{%
  \reset@font\fontsize{#1}{#2pt}%
  \fontfamily{#3}\fontseries{#4}\fontshape{#5}%
  \selectfont}%
\fi\endgroup%
\begin{picture}(1404,2109)(2284,-3658)
\put(2371,-2791){\makebox(0,0)[lb]{\smash{{\SetFigFont{11}{13.2}{\rmdefault}{\mddefault}{\updefault}{\color[rgb]{0,0,0}$a$}%
}}}}
\end{picture}%
} \medspace .$$ 
By the factorization property,
$$Z_2 \Big( \raisebox{-0.25 in}{} \Big) \thickspace
= \thickspace Z_2\Big( \raisebox{-0.17 in}{\begin{picture}(0,0)%
\includegraphics{pic13.pstex}%
\end{picture}%
\setlength{\unitlength}{1263sp}%
\begingroup\makeatletter\ifx\SetFigFont\undefined%
\gdef\SetFigFont#1#2#3#4#5{%
  \reset@font\fontsize{#1}{#2pt}%
  \fontfamily{#3}\fontseries{#4}\fontshape{#5}%
  \selectfont}%
\fi\endgroup%
\begin{picture}(1374,1539)(2314,-3088)
\end{picture}%
} \Big)
Z_2\Big( \raisebox{-0.10 in}{\begin{picture}(0,0)%
\includegraphics{pic14.pstex}%
\end{picture}%
\setlength{\unitlength}{1263sp}%
\begingroup\makeatletter\ifx\SetFigFont\undefined%
\gdef\SetFigFont#1#2#3#4#5{%
  \reset@font\fontsize{#1}{#2pt}%
  \fontfamily{#3}\fontseries{#4}\fontshape{#5}%
  \selectfont}%
\fi\endgroup%
\begin{picture}(1224,909)(2389,-3658)
\end{picture}%
} \Big).$$
However, the second factor is $1$ (i.e. has no chords) by Lemma \ref{rescale2}, 
since the opening of the strands at the top is $1$, and so is the width of the
renormalization at the bottom.

Therefore, we deduce that $Z_2\Big( \raisebox{-0.17 in}{} \Big)
= \thickspace \raisebox{-0.17 in}{\begin{picture}(0,0)%
\includegraphics{pic15.pstex}%
\end{picture}%
\setlength{\unitlength}{1263sp}%
\begingroup\makeatletter\ifx\SetFigFont\undefined%
\gdef\SetFigFont#1#2#3#4#5{%
  \reset@font\fontsize{#1}{#2pt}%
  \fontfamily{#3}\fontseries{#4}\fontshape{#5}%
  \selectfont}%
\fi\endgroup%
\begin{picture}(1404,1674)(2284,-3223)
\put(2371,-2791){\makebox(0,0)[lb]{\smash{{\SetFigFont{11}{13.2}{\rmdefault}{\mddefault}{\updefault}{\color[rgb]{0,0,0}$a$}%
}}}}
\end{picture}%
} \medspace .$

Also, we can forget about the vertical strand on the right, as we know from
Lemma \ref{nointeraction} (which generalizes to graphs word by word) that
faraway strands don't interact. This proves the first equality.
The proof or the second equality is the same, with all the pictures mirrored. 

\smallskip

By the factorization property,
$$Z_2 \Big( \medspace \raisebox{-0.25 in}{} \medspace \Big)=
Z_2 \Big( \medspace \raisebox{-0.15 in}{} \medspace \Big)
Z_2 \Big( \medspace \raisebox{-0.15 in}{\begin{picture}(0,0)%
\includegraphics{lvertex.pstex}%
\end{picture}%
\setlength{\unitlength}{908sp}%
\begingroup\makeatletter\ifx\SetFigFont\undefined%
\gdef\SetFigFont#1#2#3#4#5{%
  \reset@font\fontsize{#1}{#2pt}%
  \fontfamily{#3}\fontseries{#4}\fontshape{#5}%
  \selectfont}%
\fi\endgroup%
\begin{picture}(1359,1659)(2254,-1573)
\end{picture}%
} \medspace \Big).$$
Again, the second factor is $1$ (no chords at all), by Lemma \ref{rescale2}, if we assume
that the width of the opening where we cut is $1$ (which we are free to do of course, by 
horizontal deformations), and the width we use for the vertex renormalization is also $1$.
The third equality then follows, as does the fourth by mirroring the
pictures.

\end{proof}

We are now ready to form the equations corresponding to moves $3$ and $4$ (still
assuming that $\mu=1$). 

The value of the corrected integral $Z_3$ is constructed from the value of $Z_2$
by placing correctors on each vertex and extremum. Let us call these correctors
$n$, $u$, $\lambda$, and $Y$, for maxima, minima, $\lambda$-shaped vertices, and
$Y$-shaped vertices respectively. The correctors $n$ and $u$ are elements of
${\mathcal A}(\uparrow)$, $\lambda$ and $Y$ are elements of 
${\mathcal A}(\uparrow_2)$, and they are placed on the graph as shown in the figure 
(edge orientation issues will be dealt with later):
\begin{center}
\begin{picture}(0,0)%
\includegraphics{pic25b.pstex}%
\end{picture}%
\setlength{\unitlength}{1500sp}%
\begingroup\makeatletter\ifx\SetFigFont\undefined%
\gdef\SetFigFont#1#2#3#4#5{%
  \reset@font\fontsize{#1}{#2pt}%
  \fontfamily{#3}\fontseries{#4}\fontshape{#5}%
  \selectfont}%
\fi\endgroup%
\begin{picture}(1554,1089)(1174,-2788)
\put(1786,-2071){\makebox(0,0)[lb]{\smash{{\SetFigFont{12}{14.4}{\rmdefault}{\mddefault}{\updefault}{\color[rgb]{0,0,0}$n$}%
}}}}
\end{picture}%
  \thickspace \thickspace \thickspace
\begin{picture}(0,0)%
\includegraphics{pic23b.pstex}%
\end{picture}%
\setlength{\unitlength}{1579sp}%
\begingroup\makeatletter\ifx\SetFigFont\undefined%
\gdef\SetFigFont#1#2#3#4#5{%
  \reset@font\fontsize{#1}{#2pt}%
  \fontfamily{#3}\fontseries{#4}\fontshape{#5}%
  \selectfont}%
\fi\endgroup%
\begin{picture}(1554,1089)(1174,-1723)
\put(1786,-1576){\makebox(0,0)[lb]{\smash{{\SetFigFont{12}{14.4}{\rmdefault}{\mddefault}{\updefault}{\color[rgb]{0,0,0}$u$}%
}}}}
\end{picture}%
  \thickspace \thickspace \thickspace \thickspace
\begin{picture}(0,0)%
\includegraphics{pic24b.pstex}%
\end{picture}%
\setlength{\unitlength}{868sp}%
\begingroup\makeatletter\ifx\SetFigFont\undefined%
\gdef\SetFigFont#1#2#3#4#5{%
  \reset@font\fontsize{#1}{#2pt}%
  \fontfamily{#3}\fontseries{#4}\fontshape{#5}%
  \selectfont}%
\fi\endgroup%
\begin{picture}(1284,2454)(1249,-3358)
\put(1726,-2506){\makebox(0,0)[lb]{\smash{{\SetFigFont{7}{8.4}{\rmdefault}{\mddefault}{\updefault}{\color[rgb]{0,0,0}$\lambda$}%
}}}}
\end{picture}%
  \thickspace \thickspace \thickspace \thickspace
\begin{picture}(0,0)%
\includegraphics{pic22b.pstex}%
\end{picture}%
\setlength{\unitlength}{908sp}%
\begingroup\makeatletter\ifx\SetFigFont\undefined%
\gdef\SetFigFont#1#2#3#4#5{%
  \reset@font\fontsize{#1}{#2pt}%
  \fontfamily{#3}\fontseries{#4}\fontshape{#5}%
  \selectfont}%
\fi\endgroup%
\begin{picture}(1284,2229)(1249,-2413)
\put(1696,-1366){\makebox(0,0)[lb]{\smash{{\SetFigFont{7}{8.4}{\rmdefault}{\mddefault}{\updefault}{\color[rgb]{0,0,0}$Y$}%
}}}}
\end{picture}%

\end{center}

Using this terminology, we have proven the following proposition:

\begin{proposition} \label{translation}
With corrections $\lambda \in {\mathcal A}_2$, $Y \in {\mathcal A}_2$,
and $u \in {\mathcal A}_1$, such that
$$\raisebox{-0.2 in}{\input pic20.pstex_t} \thickspace = \thickspace
\raisebox{-0.2 in}{} \thickspace = \thickspace
\raisebox{-0.2 in}{\input pic21.pstex_t} \medspace ,$$
and $n=u^{-1}\nu^{-1}$, $Z_3$
is invariant under moves $1$-$8$, and therefore it is an isotopy
invariant of knotted trivalent graphs. As before, 
$\nu = Z_2(\raisebox{-1.5 mm}{})$.
\end{proposition}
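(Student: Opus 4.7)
The plan is to check invariance of $Z_3$ under each of the eight moves displayed earlier. By the five syzygies recalled above it suffices to verify moves 1, 3 and 4. Move 1 is handled exactly as in the knot case: with $u$ at every minimum and $n = u^{-1}\nu^{-1}$ at every maximum, the product $un = \nu^{-1}$ inserted at a hump cancels the factor $\nu$ produced when straightening it. The proof of Proposition \ref{hump} relied only on Lemma \ref{nointeraction}, which carries over to graphs verbatim, so the cancellation is local and unaffected by the rest of the graph.

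For moves 3 and 4, I would use the factorization property together with Corollary \ref{compute}. Both sides of move 3 split, by factorization, into one of the two "ear" tangles (with its vertex decorated by the corrector $\lambda$ or $Y$) stacked against a common piece that contributes identically on both sides. Corollary \ref{compute} evaluates $Z_2$ on the ear tangles explicitly in terms of $a$; plugging in the correctors and cancelling the common factor turns the equation for move 3 into the left-hand equality of the proposition, and the equation for move 4 into the right-hand equality. Once moves 1, 3 and 4 are verified, the syzygies give invariance under 2, 5, 6, 7 and 8 automatically.

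The main technical subtlety is bookkeeping. Corollary \ref{compute} was proved under the simplifying assumptions $\mu = 1$ and a specific convenient choice of edge orientations. General $\mu$ is accommodated by Lemma \ref{rescale2}: rescaling inserts a compatible exponential factor at each vertex and critical point, which can be absorbed into the correctors without changing the form of the equations. Other orientation patterns are handled by the fact that $Z_2$ commutes with orientation switches, so that the signs propagate consistently to both sides of each move. Finally, one must justify that $\lambda, Y \in {\mathcal A}_2$ make sense as correctors at trivalent vertices; here the VI relation lets us sweep one of the three edges at the vertex free of chord endpoints, reducing the local chord picture to the required two-strand diagram.
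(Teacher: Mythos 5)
Your proposal is correct and takes essentially the same route as the paper: reduce to moves 1, 3, 4 via the syzygies, settle move 1 by the knot-case argument with $un=\nu^{-1}$ (using Lemma \ref{nointeraction}), and translate moves 3 and 4 into the displayed equation by evaluating $Z_2$ on the relevant tangles via Corollary \ref{compute} (noting that the $Y$-vertex side is trivial when $\mu=1$), with general $\mu$ absorbed through Lemma \ref{rescale2} and orientations handled by compatibility of $Z_2$ with orientation switches. No substantive differences from the paper's argument.
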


Note that we used above that $Z_2$ of a $Y$ vertex with an opening of width $1$
at the top equals $1$, since $\mu=1$.

\subsection{The resulting invariants}

There is an obvious set of corrections satisfying the above equations:
$\lambda = a^{-1}$, $Y = 1$ and $u=1$. This forces 
$n=(Z_2(\raisebox{-1.5 mm}{}))^{-1}$.

Note that this $\lambda$ and $Y$ value works only with the orientations
of the edges chosen as in the pictures above. Correct notation would
be to say that $\lambda_{\downarrow \uparrow \uparrow}= a^{-1}$, where
in the subscript the first arrow shows the orientation of the top strand
of the vertex, the second stands for the lower left, and the last stands 
for the lower right strand. 

It's easy to now come up with a complete
set of corrections for all orientations: if we change the orientation
of one of the lower strands, we apply the corresponding
orientation switch operation to the correction:
$\lambda_{\downarrow \downarrow \uparrow}=S_1( a^{-1})$,
$\lambda_{\downarrow \uparrow \downarrow}=S_2( a^{-1})$,
$\lambda_{\downarrow \downarrow \downarrow}=S_1 S_2 (a^{-1})=a^{-1}$.
These satisfy the equations of Proposition \ref{translation}, 
since $Z_2$ commutes with the orientation
switch operation.

If we switch the orientation of all the edges, the proof remains unchanged
(due to the fact that $S_1 S_2(a)=a$), so we have 
$\lambda_{\uparrow \downarrow \downarrow}=a^{-1}$,
and then by the above reasoning, the rest follows:
$\lambda_{\uparrow \uparrow \downarrow}=S_1(a^{-1})$,
$\lambda_{\uparrow \downarrow \uparrow}=S_2(a^{-1})$, and
$\lambda_{\uparrow \uparrow \uparrow}= S_1 S_2 (a^{-1})= a^{-1}$.

It is worth noting that $S_1(a)=S_1 (S_1 S_2 (a))=S_2(a)$.

Since $Y$ is trivial, orientation switches don't affect $Y$.

For $n$ and $u$ the orientation of the strand doesn't matter, since
these are elements in ${\mathcal A}_1$, so each chord has two endings on
the one strand, thus orientation change operation will not change
any signs.

\begin{proposition}
With the complete set of corrections described above, $Z_3$
commutes with the orientation switch, edge delete
and connected sum operation.
\end{proposition}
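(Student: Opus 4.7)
The argument splits into three independent checks, one per operation. In each case $Z_2$ already commutes with the operation (from the theorem of Section~5), so the entire content of the proof lies in verifying that the placement and values of the correctors $u$, $n$, $\lambda$, $Y$ transform correctly under each operation. The strategy is to pick a convenient Morse representative in each case, use the factorization property to isolate the vertex/critical-point contributions, and then reduce the required equality to a small identity between elements of $\mathcal{A}_1$ or $\mathcal{A}_2$.

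For orientation switch the argument is essentially by construction. The full family $\lambda_{\epsilon_0\epsilon_1\epsilon_2}$ was defined precisely by applying the orientation switch operation to the base case $\lambda_{\downarrow\uparrow\uparrow} = a^{-1}$, using the already-observed symmetry $S_1 S_2(a) = a$ to fill in the remaining cases; and $Y = 1$, $u = 1$, $n = \nu^{-1}$ are either trivial or live in $\mathcal{A}_1$, where the orientation switch operation acts as the identity (each chord contributes two endpoints on the same strand, giving a sign $(-1)^2 = 1$). Combined with the commutation of $Z_2$ with orientation switch, multiplying through by the correctors preserves that commutation tautologically.

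For edge delete, pick the Morse embedding in which the deleted edge $e$ is vertical with a $Y$-vertex on top and a $\lambda$-vertex on the bottom (this is the embedding already used in Section~5). After deletion the two edges at each vanished vertex are concatenated: the $Y$-vertex becomes a local minimum, the $\lambda$-vertex becomes a local maximum. The Section~5 argument already identifies the two vertex renormalizations with the two new critical-point renormalizations, so the $Z_2$-pieces match on the two sides of the desired equality. What remains is to match the correctors: the trivial pair $Y = 1$ (at the old $Y$-vertex) and $u = 1$ (at the new min) match for free, while matching $\lambda = a^{-1}$ (which by VI can be swept onto the two edges adjacent to $e$, which are precisely the ones that survive) with $n = \nu^{-1}$ at the new maximum reduces to an identity of the form $a = \nu$. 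This identity is the promised ``property of $a$ almost obvious from its expression as a value of $Z$'': applying the commutation-with-delete of $Z_2$ to the tangle defining $a$ produces the hump whose $Z_2$-value is exactly $\nu$, so the identity follows formally from what is already established rather than requiring new analysis.

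For connected sum, $Z_2$ commutes with the operation by the previous theorem (take the connecting edge short and vertical so that its two new vertex renormalizations arise via factorization). The connected sum on $\mathcal{A}$ introduces no new chords on the new edge, so on the $Z_3$-side we acquire exactly two extra correctors, $\lambda = a^{-1}$ at the new $\lambda$-vertex and $Y = 1$ at the new $Y$-vertex; these must jointly contribute $1$, which follows from the same identity $a = \nu$ once one uses the rescaling Lemmas~\ref{rescale} and~\ref{rescale2} to absorb the renormalization factors attached to the new vertices. I expect the main obstacle of the whole proof to be the orientation and position bookkeeping---in particular, specifying on which pair of adjacent edges the chords of each $\lambda$-correction sit after an application of VI, and tracking how the orientation-switched variants $S_1(a^{-1})$, $S_2(a^{-1})$ appear in the various edge-orientation cases; the genuinely analytic input is confined to the single identity $a = \nu$, which the defining picture of $a$ makes visible directly.
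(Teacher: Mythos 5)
Your treatment of orientation switch and edge delete is essentially the paper's own argument: orientation switch holds by construction of the family $\lambda_{\epsilon_0\epsilon_1\epsilon_2}$ and because orientation switch acts trivially on $\mathcal{A}_1$; for edge delete one reduces to a vertical edge with the $Y$-vertex on top and the $\lambda$-vertex at the bottom, lets the Section~5 computation match the renormalizations, and reduces the corrector matching to a statement relating $a$ and $\nu$. Two points of precision there. The identity is not ``$a=\nu$'' (this does not even typecheck: $a\in\mathcal{A}_2$, $\nu\in\mathcal{A}_1$); what is needed is that the image of $a$ under the cap-closure map $\mathcal{A}_2\to\mathcal{A}_1$, merging the two strands at the new maximum, equals $\nu$ (equivalently, $a^{-1}$ capped off is $\nu^{-1}$). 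And the mechanism is not ``commutation-with-delete of $Z_2$ applied to the tangle defining $a$'' --- that tangle has no trivalent vertices, so there is no edge to delete. The paper instead uses the definition of $a$ as a $Z_2$-value (capping at width $1=\mu$ costs nothing by Lemma \ref{rescale2}), then the already-proved invariance of $Z_2$ under horizontal deformations and rigid motions of critical points to deform the capped tangle into a hump beside a faraway strand, and Lemma \ref{nointeraction} to identify the value with $\nu$; it also checks the orientation variants via $S_1,S_2$. Your sketch clearly has the right picture in mind, so this part is loose rather than wrong.

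The genuine gap is the connected-sum paragraph. In the configuration where $Z_2$ does commute with the connected sum (the paper's vertical stacking, joining the maximum of $\Gamma$ to the minimum of $\Gamma'$ by the new edge), the side $Z_3(\Gamma\#\Gamma')$ carries the new correctors $a^{-1}$ and $1$ at the two new vertices, while the side $\#\bigl(Z_3(\Gamma),Z_3(\Gamma')\bigr)$ carries the transplanted old correctors $n=\nu^{-1}$ (from the vanished maximum, now threaded over the new $\lambda$-vertex) and $u=1$ (from the vanished minimum). So the new correctors must not ``jointly contribute $1$'' --- indeed $a^{-1}\cdot 1\neq 1$ --- they must match the transplanted $n$ and $u$; the required equality is again the capped identity, transported into $\mathcal{A}$ of the vertex skeleton by the connected-sum map on chord diagrams, and this is exactly where the paper's phrase ``using that the connected sum is well-defined'' does real work. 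No rescaling-lemma absorption occurs: in this configuration the new vertex renormalizations are literally the old critical-point renormalizations and cancel between the two sides. (If instead you attach the new edge at generic, non-critical points of the two graphs, then $Z_2$ does not simply commute --- the connecting region has a nontrivial $Z_2$-value of the type computed in Corollary \ref{compute} --- so your opening claim fails in that setup too.) The repair is short and uses the same single identity as your edge-delete step, but as written the connected-sum balance of correctors is false.
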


\begin{proof}
It is obvious that $Z_3$ commutes with the orientation
switch, as $Z_2$ has the same property and 
we designed the corrections ($\lambda$ especially) to
keep it true.  

We only need to deal with vertical edge delete: 
since we now have an isotopy invariant, we can first deform the edge to be deleted
into a straight vertical line with a $Y$ vertex on top and $\lambda$ vertex on bottom.
Thus, it's enough to check that the following equalities are true:

\begin{center}
$\raisebox{-0.15 in}{\begin{picture}(0,0)%
\includegraphics{pic22.pstex}%
\end{picture}%
\setlength{\unitlength}{1381sp}%
\begingroup\makeatletter\ifx\SetFigFont\undefined%
\gdef\SetFigFont#1#2#3#4#5{%
  \reset@font\fontsize{#1}{#2pt}%
  \fontfamily{#3}\fontseries{#4}\fontshape{#5}%
  \selectfont}%
\fi\endgroup%
\begin{picture}(1389,1599)(1174,-1783)
\put(1696,-1366){\makebox(0,0)[lb]{\smash{{\SetFigFont{11}{13.2}{\rmdefault}{\mddefault}{\updefault}{\color[rgb]{0,0,0}$Y$}%
}}}}
\end{picture}%
} \thickspace
= \thickspace \raisebox{-0.1 in}{\begin{picture}(0,0)%
\includegraphics{pic23.pstex}%
\end{picture}%
\setlength{\unitlength}{1579sp}%
\begingroup\makeatletter\ifx\SetFigFont\undefined%
\gdef\SetFigFont#1#2#3#4#5{%
  \reset@font\fontsize{#1}{#2pt}%
  \fontfamily{#3}\fontseries{#4}\fontshape{#5}%
  \selectfont}%
\fi\endgroup%
\begin{picture}(1614,1089)(1114,-1723)
\put(1786,-1576){\makebox(0,0)[lb]{\smash{{\SetFigFont{12}{14.4}{\rmdefault}{\mddefault}{\updefault}{\color[rgb]{0,0,0}$u$}%
}}}}
\end{picture}%
} \medspace , \thickspace \thickspace
\raisebox{-0.15 in}{\begin{picture}(0,0)%
\includegraphics{pic24.pstex}%
\end{picture}%
\setlength{\unitlength}{1381sp}%
\begingroup\makeatletter\ifx\SetFigFont\undefined%
\gdef\SetFigFont#1#2#3#4#5{%
  \reset@font\fontsize{#1}{#2pt}%
  \fontfamily{#3}\fontseries{#4}\fontshape{#5}%
  \selectfont}%
\fi\endgroup%
\begin{picture}(1419,1599)(1174,-3358)
\put(1726,-2506){\makebox(0,0)[lb]{\smash{{\SetFigFont{11}{13.2}{\rmdefault}{\mddefault}{\updefault}{\color[rgb]{0,0,0}$\lambda$}%
}}}}
\end{picture}%
} \thickspace
= \thickspace \raisebox{-0.1 in}{\begin{picture}(0,0)%
\includegraphics{pic25.pstex}%
\end{picture}%
\setlength{\unitlength}{1500sp}%
\begingroup\makeatletter\ifx\SetFigFont\undefined%
\gdef\SetFigFont#1#2#3#4#5{%
  \reset@font\fontsize{#1}{#2pt}%
  \fontfamily{#3}\fontseries{#4}\fontshape{#5}%
  \selectfont}%
\fi\endgroup%
\begin{picture}(1614,1089)(1114,-2788)
\put(1786,-2071){\makebox(0,0)[lb]{\smash{{\SetFigFont{12}{14.4}{\rmdefault}{\mddefault}{\updefault}{\color[rgb]{0,0,0}$n$}%
}}}}
\end{picture}%
} \thickspace ,$
\end{center}
and also for the opposite orientations of the strands.

Each time $\lambda$ appears above, we need to check the equations for all appropriate 
choices of $\lambda$ depending on the orientations of the strands, for example
$\lambda_{\downarrow \uparrow \downarrow}$ and $\lambda_{\uparrow \uparrow \downarrow}$
in the second equation.

Indeed, $\lambda_{\downarrow \uparrow \downarrow}=S_2( a^{-1})=S_2(S_1 S_2 (a^{-1})=S_1(a^{-1})=
\lambda_{\uparrow \uparrow \downarrow}$, and
\begin{center}
$ \raisebox{-0.2 in}{\begin{picture}(0,0)%
\includegraphics{pic26.pstex}%
\end{picture}%
\setlength{\unitlength}{1539sp}%
\begingroup\makeatletter\ifx\SetFigFont\undefined%
\gdef\SetFigFont#1#2#3#4#5{%
  \reset@font\fontsize{#1}{#2pt}%
  \fontfamily{#3}\fontseries{#4}\fontshape{#5}%
  \selectfont}%
\fi\endgroup%
\begin{picture}(1636,1599)(1159,-3358)
\put(1426,-2446){\makebox(0,0)[lb]{\smash{{\SetFigFont{9}{10.8}{\rmdefault}{\mddefault}{\updefault}{\color[rgb]{0,0,0}$S_1(a)$}%
}}}}
\end{picture}%
} \thickspace
= \thickspace Z_2( \raisebox{-0.1 in}{\begin{picture}(0,0)%
\includegraphics{pic27b.pstex}%
\end{picture}%
\setlength{\unitlength}{1224sp}%
\begingroup\makeatletter\ifx\SetFigFont\undefined%
\gdef\SetFigFont#1#2#3#4#5{%
  \reset@font\fontsize{#1}{#2pt}%
  \fontfamily{#3}\fontseries{#4}\fontshape{#5}%
  \selectfont}%
\fi\endgroup%
\begin{picture}(1264,774)(1189,-1423)
\end{picture}%
}) \thickspace
= \medspace Z_2( \raisebox{-0.1 in}{\begin{picture}(0,0)%
\includegraphics{pic27c.pstex}%
\end{picture}%
\setlength{\unitlength}{1224sp}%
\begingroup\makeatletter\ifx\SetFigFont\undefined%
\gdef\SetFigFont#1#2#3#4#5{%
  \reset@font\fontsize{#1}{#2pt}%
  \fontfamily{#3}\fontseries{#4}\fontshape{#5}%
  \selectfont}%
\fi\endgroup%
\begin{picture}(1194,765)(1204,-1423)
\end{picture}%
}) \medspace
= \medspace Z_2( \raisebox{-2 mm}{\begin{picture}(0,0)%
\includegraphics{humpcircleb.pstex}%
\end{picture}%
\setlength{\unitlength}{3947sp}%
\begingroup\makeatletter\ifx\SetFigFont\undefined%
\gdef\SetFigFont#1#2#3#4#5{%
  \reset@font\fontsize{#1}{#2pt}%
  \fontfamily{#3}\fontseries{#4}\fontshape{#5}%
  \selectfont}%
\fi\endgroup%
\begin{picture}(325,321)(1418,-425)
\end{picture}%
}) \medspace 
= \medspace Z_2(\raisebox{-1.5 mm}{}),$
\end{center}
where all equalities are understood in ${\mathcal A}(\uparrow_1)$, 
using the isomorphism ${\mathcal A}(\uparrow_1) \cong {\mathcal A}(S^1)$. 
The first equality is true by definition of $a$ and the fact that $Z_2$ 
commutes with orientation switches, using that $\mu =1$ and 
Lemma \ref{nointeraction}. The second one is again $\mu = 1$ and 
Lemma \ref{nointeraction}, while the third is horizontal deformations and 
moving critical points, and the fourth is due to Lemma \ref{nointeraction}. 
Now commutativity with edge deletion follows by 
taking inverses on both sides. There is nothing to check for $Y$ and $u$, 
as they're both trivial.

Connected sum is the ``reverse'' of edge delete, the fact that $Z_3$ commutes with it 
is proved by backtracking
the above proof, using that the connected sum is well-defined.
\end{proof}

Before we deal with the unzip operation, let us produce a one parameter family of
corrections that all yield knotted graph invariants well behaved with respect to 
the orientation switch, edge delete, and connected sum operations. The reader who is
satisfied with one invariant is welcome to skip ahead to the paragraph following
Remark \ref{mo}.

The statement of the following lemma appears in \cite{murakami}, the proof
there is phrased in terms of q-tangles, but is based on the same trick.

\begin{lemma}
$\raisebox{-0.2 in}{\input pic40b.pstex_t} \thickspace
= \thickspace \raisebox{-0.2 in}{\begin{picture}(0,0)%
\includegraphics{pic41.pstex}%
\end{picture}%
\setlength{\unitlength}{1145sp}%
\begingroup\makeatletter\ifx\SetFigFont\undefined%
\gdef\SetFigFont#1#2#3#4#5{%
  \reset@font\fontsize{#1}{#2pt}%
  \fontfamily{#3}\fontseries{#4}\fontshape{#5}%
  \selectfont}%
\fi\endgroup%
\begin{picture}(1990,2334)(874,-1888)
\put(1111,-691){\makebox(0,0)[lb]{\smash{{\SetFigFont{9}{10.8}{\rmdefault}{\mddefault}{\updefault}{\color[rgb]{0,0,0}$b^{-x}$}%
}}}}
\end{picture}%
}$, where we
define  $b \in {\mathcal A}_2$ to be $b=Z_2(\raisebox{-0.03 in}{\begin{picture}(0,0)%
\includegraphics{bdef.pstex}%
\end{picture}%
\setlength{\unitlength}{1224sp}%
\begingroup\makeatletter\ifx\SetFigFont\undefined%
\gdef\SetFigFont#1#2#3#4#5{%
  \reset@font\fontsize{#1}{#2pt}%
  \fontfamily{#3}\fontseries{#4}\fontshape{#5}%
  \selectfont}%
\fi\endgroup%
\begin{picture}(1224,699)(1189,-2173)
\end{picture}%
} )$,
an ``upside down $a$'', and $x \in {\mathbb R}$ any real number.
\end{lemma}

\begin{proof}
Throughout the proof we will assume that all strand openings are
of width $1$ as well as $\mu =1$, however, the statement itself is just 
an equality in ${\mathcal A}(Y)$, and thus independent of the
choice of $\mu$.

Then, the same way we had done in Lemma \ref{mirror} to Corollary \ref{compute}, 
we can compute
$$Z_2 \Big ( \raisebox{-0.12 in}{\begin{picture}(0,0)%
\includegraphics{pic29.pstex}%
\end{picture}%
\setlength{\unitlength}{1263sp}%
\begingroup\makeatletter\ifx\SetFigFont\undefined%
\gdef\SetFigFont#1#2#3#4#5{%
  \reset@font\fontsize{#1}{#2pt}%
  \fontfamily{#3}\fontseries{#4}\fontshape{#5}%
  \selectfont}%
\fi\endgroup%
\begin{picture}(1329,1254)(2314,-4438)
\end{picture}%
} \Big )=
\thickspace \raisebox{-0.14 in}{\begin{picture}(0,0)%
\includegraphics{pic30.pstex}%
\end{picture}%
\setlength{\unitlength}{1500sp}%
\begingroup\makeatletter\ifx\SetFigFont\undefined%
\gdef\SetFigFont#1#2#3#4#5{%
  \reset@font\fontsize{#1}{#2pt}%
  \fontfamily{#3}\fontseries{#4}\fontshape{#5}%
  \selectfont}%
\fi\endgroup%
\begin{picture}(1494,1254)(2224,-4438)
\put(2371,-3856){\makebox(0,0)[lb]{\smash{{\SetFigFont{10}{12.0}{\rmdefault}{\mddefault}{\updefault}{\color[rgb]{0,0,0}$b$}%
}}}}
\end{picture}%
} \medspace .$$

By multiplicativity, the fact that faraway strands don't interact 
(Lemma \ref{nointeraction}), and our previous computations,
$$ Z_2 \Bigg( \raisebox{-0.5 in}{\begin{picture}(0,0)%
\includegraphics{pic31.pstex}%
\end{picture}%
\setlength{\unitlength}{1066sp}%
\begingroup\makeatletter\ifx\SetFigFont\undefined%
\gdef\SetFigFont#1#2#3#4#5{%
  \reset@font\fontsize{#1}{#2pt}%
  \fontfamily{#3}\fontseries{#4}\fontshape{#5}%
  \selectfont}%
\fi\endgroup%
\begin{picture}(2769,4974)(1009,-2773)
\end{picture}%
} \Bigg)=
\thickspace \raisebox{-0.5 in}{\input pic33.pstex_t} \medspace . $$

By an unzip, it follows that
$$ Z_2 \Bigg( \raisebox{-0.35 in}{\begin{picture}(0,0)%
\includegraphics{pic32.pstex}%
\end{picture}%
\setlength{\unitlength}{1145sp}%
\begingroup\makeatletter\ifx\SetFigFont\undefined%
\gdef\SetFigFont#1#2#3#4#5{%
  \reset@font\fontsize{#1}{#2pt}%
  \fontfamily{#3}\fontseries{#4}\fontshape{#5}%
  \selectfont}%
\fi\endgroup%
\begin{picture}(1659,3639)(814,-1288)
\end{picture}%
} \Bigg)=
\thickspace \raisebox{-0.35 in}{\input pic34.pstex_t} \medspace . $$

We know that ``adding a hump'' amounts to multiplying by a 
factor of $\nu=Z_2(\raisebox{-0.05 in}{})$, therefore
$$ Z_2 \Big ( \raisebox{-0.2 in}{\begin{picture}(0,0)%
\includegraphics{pic35.pstex}%
\end{picture}%
\setlength{\unitlength}{908sp}%
\begingroup\makeatletter\ifx\SetFigFont\undefined%
\gdef\SetFigFont#1#2#3#4#5{%
  \reset@font\fontsize{#1}{#2pt}%
  \fontfamily{#3}\fontseries{#4}\fontshape{#5}%
  \selectfont}%
\fi\endgroup%
\begin{picture}(1224,2610)(799,-259)
\end{picture}%
} \Big )=
Z_2 \Big ( \raisebox{-0.15 in}{\begin{picture}(0,0)%
\includegraphics{pic36.pstex}%
\end{picture}%
\setlength{\unitlength}{1105sp}%
\begingroup\makeatletter\ifx\SetFigFont\undefined%
\gdef\SetFigFont#1#2#3#4#5{%
  \reset@font\fontsize{#1}{#2pt}%
  \fontfamily{#3}\fontseries{#4}\fontshape{#5}%
  \selectfont}%
\fi\endgroup%
\begin{picture}(984,1674)(2314,-1588)
\end{picture}%
} \Big) \nu , $$
where multiplication by $\nu$ is on the right strand.

But $Z_2 \Big ( \raisebox{-0.15 in}{} \Big)=1$,
due to $\mu = 1$, as seen before. So we have
$$\raisebox{-0.4 in}{\input pic37.pstex_t} \thickspace =1.$$

Using that $\nu ^{-1}$ commutes with everything due to the 
Locality Lemma \ref{loc}, and multiplying by $b^{-1}$, we
get
$$\raisebox{-0.3 in}{\input pic38.pstex_t} \thickspace =
\thickspace \raisebox{-0.25 in}{\begin{picture}(0,0)%
\includegraphics{pic39.pstex}%
\end{picture}%
\setlength{\unitlength}{1421sp}%
\begingroup\makeatletter\ifx\SetFigFont\undefined%
\gdef\SetFigFont#1#2#3#4#5{%
  \reset@font\fontsize{#1}{#2pt}%
  \fontfamily{#3}\fontseries{#4}\fontshape{#5}%
  \selectfont}%
\fi\endgroup%
\begin{picture}(1592,2154)(949,-1888)
\put(1126,-736){\makebox(0,0)[lb]{\smash{{\SetFigFont{9}{10.8}{\rmdefault}{\mddefault}{\updefault}{\color[rgb]{0,0,0}$b^{-1}$}%
}}}}
\end{picture}%
} \medspace .$$

Since $a$ and $\nu ^{-1}$ commute, the lemma follows for all 
$x \in {\mathbb N}$ (we can ``pull each $b^{-1}$ through the 
vertex'' one by one, then group $a$'s and $\nu ^{-1}$'s together). 
Then by Taylor expansions, the statement follows for all $x$.

\end{proof}

\begin{proposition}
The corrections

$\lambda_{\downarrow \uparrow \uparrow} = a^{x-1}$

$u = \nu ^{-x}$

$Y_{\downarrow \downarrow \uparrow} = b^{-x}$

$n = \nu ^{x-1}$

make $Z_3$ a universal finite type invariant of knotted 
trivalent graphs that
commutes with the orientation change, edge delete and 
connected sum operation.
\end{proposition}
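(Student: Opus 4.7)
The plan is to reduce the statement to the verifications already carried out for the ``obvious'' choice $\lambda = a^{-1}$, $Y = 1$, $u = 1$, $n = \nu^{-1}$, by exploiting the preceding lemma that relates $b^{-x}$ to $a^x$ via a factor of $\nu^x$.

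First, I would verify that the displayed corrections satisfy the equations of Proposition~\ref{translation} (together with the condition $un = \nu^{-1}$). The equation $\nu^{-x} \cdot \nu^{x-1} = \nu^{-1}$ is immediate. For the moves 3 and 4 equations, after substituting $\lambda = a^{x-1}$ and $Y = b^{-x}$, the desired identity becomes precisely the statement proved in the preceding lemma: that inserting $\nu^x$ on the appropriate cap strand converts the $b^{-x}$ side of the $Y$-vertex into the $a^{x-1}$ loading of the $\lambda$-vertex (combined with the $a$ factor supplied by $Z_2$ of the $\lambda$-vertex itself, per Corollary~\ref{compute}). The $\nu^{-x}$ factor on $u$ exactly cancels the extra $\nu^x$ introduced by the lemma on the cup side. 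Combined with the syzygies, invariance under all eight moves of Section~\ref{corrections} follows, so $Z_3$ is a knotted trivalent graph invariant for every $x$.

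Second, I would extend the corrections to arbitrary edge orientations using the symmetries $S_1 S_2(a) = a$ and $S_1(a) = S_2(a)$ noted in the previous subsection (and the analogous identities for $b$, which follow from the same mirror-symmetry considerations used to prove them for $a$). Since $u, n \in \mathcal{A}_1$ are unaffected by orientation reversal (each chord has both endpoints on one strand), and $\lambda$, $Y$ transform by applying $S_i$, commutativity of $Z_3$ with orientation switch follows immediately from the corresponding property of $Z_2$, exactly as in the proof for the obvious solution.

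For edge delete, I would first deform the edge to a straight vertical segment with a $Y$-vertex on top and $\lambda$-vertex on bottom, then check the same local equalities as before. The required identity, after cancellation along the deleted edge, reads $a^{x-1} \cdot b^{-x} \cdot \nu^{x-1} \cdot \nu^{-x} = \nu^{-1}$ (up to placement on the appropriate strand), which follows again from the preceding lemma and the factorization property, using Lemma~\ref{nointeraction} to handle the faraway part. Commutation with connected sum is then obtained by running the edge-delete argument backwards, as in the base case. Finally, universality as a finite type invariant follows from Kontsevich's argument reviewed in Section~3: the corrections $\lambda, Y, u, n$ all have leading term $1$, so for a singular knotted graph $\Gamma_D$ with chord diagram $D$, the lowest-degree term of $Z_3(\Gamma_D)$ coincides with that of $Z_2(\Gamma_D)$, which is $D$ itself by the Kontsevich sketch applied locally at each double point.

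The main obstacle is the first step: correctly translating the pictorial equation of Proposition~\ref{translation} into an algebraic identity in $\mathcal{A}_2$ and matching it with the identity proved in the preceding lemma, keeping careful track of which strands carry the $\nu$ factors and on which sides of the vertex the $a$ and $b$ loadings sit. Once this bookkeeping is done, the rest of the proof is formal manipulation using properties already established for $Z_2$.
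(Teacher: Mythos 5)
Your proposal follows essentially the same route as the paper: verify $un=\nu^{-1}$ directly, reduce the move 3 and 4 equations to the identity of the preceding lemma relating $a^x$, $b^{-x}$ and $\nu^{\pm x}$ (via Proposition~\ref{translation}), extend to all orientations by orientation switches, repeat the first-set argument for edge delete and connected sum (using that $a$, resp.\ $b$, capped or cupped off gives $\nu$), and invoke the knot-case universality argument. The only difference is that you spell out the leading-term-$1$ remark for universality and phrase the edge-delete check as one combined product identity rather than the two separate local checks at the cap and the cup, which is harmless given centrality of the correction terms.
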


\begin{proof}
We need to check that the equations translated from moves 
$1$, $3$ and $4$ are satisfied to prove the invariance part:
The equation coming from move $1$ says $nu=\nu^{-1}$, 
which holds for the $n$ and $u$ above.

Moves $3$ and $4$ were translated in Proposition \ref{translation} to
$$\raisebox{-0.2 in}{\input pic20.pstex_t} \thickspace = \thickspace
\raisebox{-0.2 in}{} \thickspace = \thickspace
\raisebox{-0.2 in}{\input pic21.pstex_t} \medspace .$$

This is satisfied since
$$\raisebox{-0.2 in}{\input pic20.pstex_t} \thickspace =
\thickspace \raisebox{-0.2 in}{\input pic40.pstex_t} \thickspace =
\thickspace \raisebox{-0.2 in}{} \thickspace =
\thickspace \raisebox{-0.2 in}{} ,$$
and the mirror image done the same way.

The complete set of corrections follows from these
by inserting the appropriate orientation switches. This will
ensure that $Z_3$ commutes with the orientation switch operation.

The fact that $Z_3$ is a universal finite type invariant is true
by the exact same proof that applies to knots.

For edge delete and connected sum, the proof is the same as for
our first set of corrections.
\end{proof}

\begin{remark} \label{mo}
{\rm With the use of the symmetric corrections we get for
$x=1/2$, our construction coincides with that of \cite{murakami}.
(Except for the minor difference in the target space, as our skeleton 
graphs are not vertex-oriented. However, the definition of the framing at
vertices induces a vertex orientation, so the difference is insignificant.)}
\end{remark}

Throughout the above calculations and proofs, we had always assumed that
the re-normalization scale $\mu$ was chosen to be $1$. However,
when we change the scale from $\mu$ to $\mu'$, all that happens 
is that factors of
$exp \Big( \frac {ln(\mu'/\mu) t_{12}}{2\pi i} \Big)$ will be 
placed at $\lambda$-vertices and maxima, while the reciprocal 
is placed at $Y$-vertices and minima. Therefore, if the scale is
chosen differently, we can account for this by 
multiplying the correction terms with the inverses of these factors.

\subsection{Re-normalizing unzip}
There is one shortcoming of $Z_3$ still:
it fails to commute with the unzip operation. (As before, it is
enough to restrict our attention to vertical unzip and delete.)

Commuting with 
unzip would require that $Y \lambda =1$, in other words, after 
we unzip a vertical edge, we would want the top and the bottom 
corrections to cancel each other out. (As we have discussed before,
the chords ending on the unzipped edge come in sums of pairs, and 
$\lambda$ and $Y$ commute with these by the Locality Lemma \ref{loc},
so they could indeed cancel each other out.)

However, any set of corrections that makes $Z_3$ a knotted graph
invariant can either let it commute with the edge delete operation or 
with the unzip, but not both. This is easy to show:

As said above, for $Z_3$ to commute with unzip, we
need $$\raisebox{-0.2 in}{\input pic42.pstex_t} \medspace = 1.$$
Since $Y$ has an inverse in ${\mathcal A}_2$ (otherwise it couldn't 
be a correction), this inverse has to be $\lambda$. 

Therefore, we have
$$1= \medspace \raisebox{-0.3 in}{\input pic43.pstex_t} \medspace =
\medspace \raisebox{-0.25 in}{\input pic44.pstex_t} \medspace =
\medspace \raisebox{-0.25 in}{\begin{picture}(0,0)%
\includegraphics{pic45.pstex}%
\end{picture}%
\setlength{\unitlength}{1500sp}%
\begingroup\makeatletter\ifx\SetFigFont\undefined%
\gdef\SetFigFont#1#2#3#4#5{%
  \reset@font\fontsize{#1}{#2pt}%
  \fontfamily{#3}\fontseries{#4}\fontshape{#5}%
  \selectfont}%
\fi\endgroup%
\begin{picture}(1916,1824)(1068,-2173)
\put(1666,-841){\makebox(0,0)[lb]{\smash{{\SetFigFont{10}{12.0}{\rmdefault}{\mddefault}{\updefault}{\color[rgb]{0,0,0}$\nu^{-1}$}%
}}}}
\end{picture}%
} \medspace ,$$
a contradiction, where the second equality is due to the assumption that
$Z_3$ commutes with the edge delete operation: as discussed before,
this property is equivalent to
\begin{center}
$\raisebox{-0.15 in}{\begin{picture}(0,0)%
\includegraphics{pic22c.pstex}%
\end{picture}%
\setlength{\unitlength}{1539sp}%
\begingroup\makeatletter\ifx\SetFigFont\undefined%
\gdef\SetFigFont#1#2#3#4#5{%
  \reset@font\fontsize{#1}{#2pt}%
  \fontfamily{#3}\fontseries{#4}\fontshape{#5}%
  \selectfont}%
\fi\endgroup%
\begin{picture}(1284,1599)(1249,-1783)
\put(1696,-1366){\makebox(0,0)[lb]{\smash{{\SetFigFont{12}{14.4}{\rmdefault}{\mddefault}{\updefault}{\color[rgb]{0,0,0}$Y$}%
}}}}
\end{picture}%
} \thickspace
= \thickspace \raisebox{-0.1 in}{} \medspace , \thickspace \thickspace
\raisebox{-0.15 in}{\begin{picture}(0,0)%
\includegraphics{pic24c.pstex}%
\end{picture}%
\setlength{\unitlength}{1381sp}%
\begingroup\makeatletter\ifx\SetFigFont\undefined%
\gdef\SetFigFont#1#2#3#4#5{%
  \reset@font\fontsize{#1}{#2pt}%
  \fontfamily{#3}\fontseries{#4}\fontshape{#5}%
  \selectfont}%
\fi\endgroup%
\begin{picture}(1284,1599)(1249,-3358)
\put(1726,-2506){\makebox(0,0)[lb]{\smash{{\SetFigFont{11}{13.2}{\rmdefault}{\mddefault}{\updefault}{\color[rgb]{0,0,0}$\lambda$}%
}}}}
\end{picture}%
} \thickspace
= \thickspace \raisebox{-0.1 in}{} \thickspace ,$
\end{center}
which implies the second equality.

In the argument above, we're ignoring edge orientation issues for
simplicity.
The edge orientations compatible with unzip are not the same as the
ones compatible with edge deletion, so, strictly speaking, we're
not talking about the same $\lambda$ and $Y$. However, they only differ
by orientation switches, and orientation switches commute with taking 
inverses, so this doesn't interfere with the proof.

\smallskip
Now that we're convinced that this issue is unavoidable, we
fix it by re-normalizing the unzip operation on ${\mathcal A}$,
in other words we modify the algebra ${\mathcal A}$ slightly,
changing only the unzip operation.

The new unzip 
$\widetilde{u}: {\mathcal A}(\gamma) \to {\mathcal A}(u(\gamma))$
has to satisfy $\widetilde{u}(Z_3(\gamma))=Z_3(u(\gamma))$.
The obvious way to achieve this is to introduce a correction
that cancels out the error $Y \lambda$.

Take, for example, our first set of corrections. For (vertical) 
unzip to be defined, either all edges need to be oriented upwards,
or all downwards. In both cases $\lambda_{\downarrow \downarrow \downarrow}=
\lambda_{\uparrow \uparrow \uparrow}= a^{-1}$, and $Y=1$. 

Let $\widetilde{u}=i_a \circ u$, where by $i_a$ we mean ``inject a copy
of $a$ on the unzipped edge''. (This will also commute with the
pairs of chords ending on the edge, so it doesn't matter where 
we place it.)

\begin{remark}
{\rm In the case of Murakami and Ohtsuki's choice of corrections,
we have} $Y \lambda = b^{-1/2}a^{-1/2}$. {\rm The following fact is
proved by Le and Murakami in \cite{lemu}:}
$$\raisebox{-0.3 in}{\input pic46.pstex_t} \medspace  = 
\raisebox{-0.38 in}{\input pic47.pstex_t} \medspace . $$
\end{remark}
Let us sketch a simple proof using properties of $Z_2$ and $Z_3$:

By multiplying the tangles used to define $a$ and $b$, computing
$Z_2$ and unzipping a vertical edge with no chords, we get:
$$\raisebox{-0.3 in}{\input pic46.pstex_t} \thinspace =
Z_2 \Bigg( \raisebox{-0.3 in}{\begin{picture}(0,0)%
\includegraphics{pic51.pstex}%
\end{picture}%
\setlength{\unitlength}{2605sp}%
\begingroup\makeatletter\ifx\SetFigFont\undefined%
\gdef\SetFigFont#1#2#3#4#5{%
  \reset@font\fontsize{#1}{#2pt}%
  \fontfamily{#3}\fontseries{#4}\fontshape{#5}%
  \selectfont}%
\fi\endgroup%
\begin{picture}(882,1274)(3760,-1373)
\end{picture}%
 } \Bigg) .$$
We can produce this graph by an unzip: $\raisebox{-0.3 in}{} \medspace=
u \Bigg( \raisebox{-0.3 in}{\begin{picture}(0,0)%
\includegraphics{pic52.pstex}%
\end{picture}%
\setlength{\unitlength}{2052sp}%
\begingroup\makeatletter\ifx\SetFigFont\undefined%
\gdef\SetFigFont#1#2#3#4#5{%
  \reset@font\fontsize{#1}{#2pt}%
  \fontfamily{#3}\fontseries{#4}\fontshape{#5}%
  \selectfont}%
\fi\endgroup%
\begin{picture}(854,1644)(2106,-1933)
\end{picture}%
} \Bigg),$ but
this not being a vertical unzip, it does not commute with $Z_2$,
so let us use $Z_3$ with our first set of corrections and the 
re-normalized version of unzip:
\begin{center}
$Z_3 \Bigg( \raisebox{-0.3 in}{} \Bigg)=
\widetilde{u} Z_3 \Bigg( \raisebox{-0.3 in}{} \Bigg)
=\widetilde{u} \medspace \raisebox{-0.4 in}{\input pic53.pstex_t}= 
\medspace \raisebox{-0.35 in}{\begin{picture}(0,0)%
\includegraphics{pic54.pstex}%
\end{picture}%
\setlength{\unitlength}{1855sp}%
\begingroup\makeatletter\ifx\SetFigFont\undefined%
\gdef\SetFigFont#1#2#3#4#5{%
  \reset@font\fontsize{#1}{#2pt}%
  \fontfamily{#3}\fontseries{#4}\fontshape{#5}%
  \selectfont}%
\fi\endgroup%
\begin{picture}(1731,2201)(2539,-2879)
\put(2618,-1627){\makebox(0,0)[lb]{\smash{{\SetFigFont{8}{9.6}{\rmdefault}{\mddefault}{\updefault}{\color[rgb]{0,0,0}$u \left( \nu^{-1} \right)$}%
}}}}
\end{picture}%
}.$
\end{center}
For the second equality above, we use the fact that $Z_2$ 
of this graph is trivial (everything cancels out), so in $Z_3$
all we have is the corrections.

Now we can get $Z_2$ back from $Z_3$ by undoing the corrections:
$$\raisebox{-0.3 in}{\input pic46.pstex_t} \medspace =
Z_2 \Bigg( \raisebox{-0.3 in}{} \Bigg)=
\medspace \raisebox{-0.4 in}{\input pic55.pstex_t} \medspace,$$
which completes the proof.

Since the element on the right side of the equality is central
(by the Locality Lemma \ref{loc}), this implies that
$a$ and $b$ commute, and hence $a^x b^x =(ab)^x$.
So the re-normalization in this case (i.e.
injecting $a^{1/2}b^{1/2}$ on the unzipped edge) has an
additional, different description: we achieve the same by
first injecting $\nu^{-1/2}$ on the edge, then unzipping,
and then injecting $\nu^{1/2}$ on both new edges.

Taking any member of the one-parameter family of corrections,
we can re-normalize unzip by planting $\lambda^{-1}Y^{-1}$ on
the pair of edges resulting from the unzip. This way, in all cases, 
$Z_3$ becomes an invariant that commutes with all four operations
on knotted trivalent graphs and ${\mathcal A}$, with this 
re-normalized version of $u$. 

\subsection{Parenthesized tangles and Drinfeld's associators}
Let us end with a sketch of how this construction relates to 
parenthesized tangles (a.k.a. q-tangles, see for example 
\cite{dbn3}, \cite{lemu3}) and Drinfeld's 
associators (see \cite{dbn4}, \cite{dri1}, \cite{dri2}).
There is an easy map $\alpha$ from parenthesized tangles to KTGs. We
join ends by vertices according to the parenthetization, as 
shown:
\begin{center}
\begin{picture}(0,0)%
\includegraphics{pic48.pstex}%
\end{picture}%
\setlength{\unitlength}{1579sp}%
\begingroup\makeatletter\ifx\SetFigFont\undefined%
\gdef\SetFigFont#1#2#3#4#5{%
  \reset@font\fontsize{#1}{#2pt}%
  \fontfamily{#3}\fontseries{#4}\fontshape{#5}%
  \selectfont}%
\fi\endgroup%
\begin{picture}(5299,3786)(850,-3304)
\put(3226,-1411){\makebox(0,0)[lb]{\smash{{\SetFigFont{10}{12.0}{\rmdefault}{\mddefault}{\updefault}{\color[rgb]{0,0,0}$\alpha$}%
}}}}
\end{picture}%

\end{center}
The parenthetization of the bottom of the tangle shown is
$(*(*(**)))$, while on the top it is $((**)(**))$. The ends
are joined by vertices accordingly.

Composition of parenthesized tangles translates to taking the
connected sum of KTGs, and then unzipping the middle edges until no more
unzips are possible. As an example, let's multiply the above tangle 
by its mirror image:
\begin{center}
\input pic49.pstex_t
\end{center}
Looking at the picture, we can convince ourselves that we get the
same result by first applying $\alpha$ (as seen on the previous figure),
taking the connected sum of the resulting KTG's, and unzipping the middle,
or by first multiplying the parenthesized tangles, and then applying
$\alpha$. 

The inverse of $\alpha$ amounts to unzipping all edges starting from the 
top to bottom one, until no trivalent vertices are left, and then cutting
the unzipped edges.

Since $Z_3$ commutes with the (re-normalized) unzip operation,
we see that in particular, 
$Z_3 \Big( \raisebox{-0.1 in}{\begin{picture}(0,0)%
\includegraphics{pic50.pstex}%
\end{picture}%
\setlength{\unitlength}{1381sp}%
\begingroup\makeatletter\ifx\SetFigFont\undefined%
\gdef\SetFigFont#1#2#3#4#5{%
  \reset@font\fontsize{#1}{#2pt}%
  \fontfamily{#3}\fontseries{#4}\fontshape{#5}%
  \selectfont}%
\fi\endgroup%
\begin{picture}(1224,924)(5089,-1423)
\end{picture}%
} \Big)$ will satisfy the
pentagon and hexagon equations (\cite{dbn4}, \cite{dri1}, \cite{dri2}), hence 
it is a construction of an associator.

\end{document}